\definecolor{red}{rgb}{1,0,0}
\definecolor{green}{rgb}{0,1,0}
\definecolor{SeaGreen}{RGB}{46,139,87}
\definecolor{Maroon}{RGB}{128,0,0}
\newcommand{\N}{\mathbb{N}}
\newcommand{\C}{{\mathbb{C}}}
\newcommand{\R}{{\mathbb{R}}}
\newcommand{\A}{{\mathcal A}}
\newcommand{\B}{\mathcal B}
\newcommand{\D}{\mathcal D}
\def\Dg {{\mathfrak D}}
\def\Kg {{\mathcal K}}
\newcommand{\LL}{\mathcal L}
\newcommand{\OO}{\mathcal O}
\def\Rg {{\mathcal R}}
\def\Sg {{\mathcal S}}
\def\Tg {{\mathcal T}}
\def\curl{\text{\rm curl}}
\def\curl{\text{\rm curl\,}}
\def\Div{\text{\rm div\,}}
\renewcommand {\Re}{{\rm Re\,}}
\renewcommand{\Im}{{\rm Im\,}}
\def\Ai{\text{\rm Ai\,}}
\def\0{\mathbf  0}
\def\XXint#1#2#3{{\setbox0=\hbox{$#1{#2#3}{\int}$ }
\vcenter{\hbox{$#2#3$ }}\kern-.6\wd0}}
\numberwithin{equation}{section}
\theoremstyle{plain}
\newtheorem{theorem}{Theorem}[section]
\newtheorem{lemma}[theorem]{Lemma}
\newtheorem{proposition}[theorem]{Proposition}
\newtheorem{remark}[theorem]{Remark}
\newtheorem{corollary}[theorem]{Corollary}
\title{Stability of laminar monotone shear flows in a channel for high
Reynolds number}
\author{ Y. Almog, Department of
  Mathematics, \\ Braude College of Engineering, \\ 
    Carmiel 2161002, Israel \\~\\
  and \\~\\
\noindent   B. Helffer, Laboratoire de Math\'ematiques Jean Leray, \\CNRS and  Nantes Universit\'e, \\
 44000 Nantes  Cedex France}
\begin{document}
\maketitle
\bibliographystyle{siam}
\begin{abstract} 
  We consider the stability of a laminar flow $U\in C^4([-1,1])$ in the
  two-dimensional channel $\R\times[-1,1]$ in the large Reynolds number
  limit.  Assuming that $U$ is strictly monotone but allowing $U''$ to
  vanish, we obtain that if the operator
  \begin{displaymath}
     \Kg_{\nu}=-\frac{d^2}{dx^2}+\frac{U^{\prime\prime}}{U-\nu} \,,
  \end{displaymath}
is strictly positive for all $\nu\in\R$ for which $U^{\prime\prime}(U^{-1}(\nu))=0$,
then $U$ is stable for sufficiently large Reynolds number. This
contribution generalizes previous results  mostly by allowing long wave
perturbations (but much shorter than the Reynolds number). 
\end{abstract}

\section{Introduction}
 Consider the incompressible Navier-Stokes equations in the
   two-dimensional pipe  $D=\R\times (-1,1)$
   \begin{equation}
   \label{eq:1}
   \begin{cases}
   \partial_t {\mathbf v} - \epsilon \Delta {\mathbf v} + {\mathbf v} \cdot \nabla {\mathbf v} = -
   \nabla p & \text{in } \R_+\times D  \\
   {\mathbf v}=v_b\; \hat{i}_1  & \text{on } \R_+\times\partial D  \,,
   \end{cases} 
   \end{equation}
   where  $\hat{i}_1 = (1,0)$, ${\mathbf v}=(v_1,v_2)$ is the fluid
   velocity, and $p$ is the pressure. \\
   The parameter
    \begin{equation} \label{defreynolds}
    R :=\frac 1 \epsilon
    \end{equation}  is the Reynolds
    number of the  flow and 
    \begin{displaymath}
      v_b:\partial D\to\R
    \end{displaymath}
is the boundary velocity.\\ Since the
    flow is incompressible we must have
   \begin{displaymath}
     \Div {\mathbf v}=0\,.
   \end{displaymath}
   We linearize \eqref{eq:1} near the laminar flow (cf. \cite{AH1}) 
   \begin{displaymath}
     {\mathbf v}=U(x_2)\hat{i}_1 \,,
   \end{displaymath}
   to obtain the linearized equation 
\begin{displaymath}
  {\bf u}_t-\mathcal T_0({\bf u},q)=0
\end{displaymath}
   where  ${\bf u}=(u_1,u_2)$ and $q$  are defined on $\mathbb R_+\times
   D$, and $\mathcal T_0$ is the map 
     \begin{equation}
   \label{eq:2}
   ({\bf u},q) \mapsto  {\mathcal T}_0 ({\bf u} , q ):=  -  \epsilon \,\Delta {\mathbf u} + U\, \frac{\partial{\mathbf
       u}}{\partial x_1}+ \, u_2\, U^\prime\, \hat{i}_1 - \nabla q\,.
   \end{equation}
   
We proceed with a formal derivation of the Orr-Sommerfeld equation,
   intentionally skipping the definitions of ${\bf v}$, $p$, ${\bf u}$, ${\bf f}$, 
   and $q$. Interested readers can read the entire derivation in
   \cite{AH1}. The associated resolvent equation for
 $\mathcal T_0$ assumes the form
   \begin{equation}
   \label{eq:3}
     \Tg_0({\bf u},q)-\Lambda{\bf u}={\bf f} \,,
   \end{equation}
   where $\Div {\bf u}=0$ and  $\Lambda\in\C$ is the spectral parameter.\\ 
    Hence, we may define a stream function
   \begin{displaymath}
     {\mathbf u}=\nabla_\perp\psi=(-\psi_{x_2},\psi_{x_1}) \,.
   \end{displaymath}
  Substituting the above into \eqref{eq:3} and then taking the curl of
  the ensuing equation for $\psi$ yields
  \begin{equation}
  \label{eq:4}
    \Big(-  \epsilon \Delta^2 + U\frac{\partial}{\partial x_1}\Delta -
    U^{\prime\prime}\frac{\partial}{\partial x_1}  - \Lambda \,  \Delta\Big)\psi=F \,,
  \end{equation}
where $F=\curl {\bf f}$. 

  We consider $U\in C^4([-1,1])$ satisfying 
  \begin{equation}
  \label{eq:5}
     |U^\prime(x)|\geq {\bf m} >0
    \end{equation}
  Substituting $\psi(x_1,x_2)=\phi(x_2) \, e^{i\alpha x_1}$ into
  \eqref{eq:4} with $ \phi:(-1,1)\to\C$ yields  the equation
  \begin{subequations}\label{eq:1.7}
  \begin{equation}
    \B_{\lambda,\alpha,\beta}\,\phi=f \,, 
  \end{equation}
  where (setting $x_2=x$)
   \begin{equation}
\label{eq:6}
     \B_{\lambda,\alpha,\beta} =(\LL_\beta -\beta\lambda)\Big(\frac{d^2}{dx^2}-\alpha^2\Big)  -i\beta U^{\prime\prime} \,,
   \end{equation}
   where
  \begin{equation}
  \label{eq:p29}
    \LL_\beta  = -\frac{d^2}{dx^2}+i\beta U\,.
  \end{equation}
  \end{subequations}
In the above 
\begin{equation}\label{defbeta}
\beta = \alpha \epsilon^{-1} =\alpha R
\end{equation}
 ($R$ being the Reynolds
     number introduced in \eqref{defreynolds}), and, for $\beta \neq 0$, 
     \begin{displaymath}
     \lambda = \hat{\Lambda} -\alpha^2\beta^{-1}\,,
     \end{displaymath}
where
\begin{equation}
   \hat{\Lambda} =\frac{\Lambda}{\alpha}
\end{equation}

     We refer to Section 3 in \cite{AH1} for the
     details of the derivation.  We use the pair of parameters
     $(\alpha,\beta)$ instead of $(\alpha,R)$ since the asymptotic limit we
     consider in the sequel is
     $\beta\to\infty$. \\
 We consider the Orr-Sommerfeld operator, i.e. the Dirichlet
 realization $\B_{\lambda,\alpha,\beta}^{\mathcal D}$ of $ \B_{\lambda,\alpha,\beta}$ on the
 following domain 
  \begin{equation}
\label{eq:7}
  D(\B_{\lambda,\alpha,\beta}^{\mathcal D})=\{u\in H^4(-1,1)\,,\, u(1)=u^\prime(1)= u(-1) =u^\prime (-1) =0 \}\,.
  \end{equation}
Since $\B_{\lambda,\alpha,\beta}=\B_{\lambda,-\alpha,\beta}$ we consider the case $\alpha\geq0\,$ only in
the sequel.\\
For any $\nu\in[U(-1),U(1)]$ we define $x_\nu\in[-1,1]$ by  $$U(x_\nu)=\nu\,.$$
Notice that $x_\nu$ is unique by \eqref{eq:5} .\\
 Let further
\begin{displaymath}
  \Dg=\{ \nu\in[U(-1),U(1)] \,| \,U^{\prime\prime}(x_\nu)=0\,\} \,.
\end{displaymath}
For $\nu\in\Dg$ we then define the operator 
$\Kg_{\nu}^{\mathcal D}$ as the Dirichlet realization in $L^2(-1,+1)$ of the differential operator 
\begin{displaymath}
  \Kg_{\nu}=-\frac{d^2}{dx^2}+\frac{U^{\prime\prime}}{U-\nu} \,.
\end{displaymath}
Hence, under the assumption (\ref{eq:5}),  we have
\begin{displaymath}
D(\Kg_{\nu}^\D)= H^2(-1,1)\cap H^1_0(-1,1)
\end{displaymath}
We can now state the main result. 
\begin{theorem}
  \label{thm:orr} 
  Let $U\in C^4([-1,1])$ satisfy \eqref{eq:5} and 
\begin{equation}
\label{eq:8}  
\inf_{\nu\in\Dg}\min  \sigma(\Kg_\nu^\D)>0 \,.
\end{equation} 
Then, there exist positive $C$, $\Upsilon$, and $\beta_0>1$ such
that for all $\beta>\beta_0$ and $\alpha,\lambda$ such that $ 0\leq \alpha$ and  $ \Re \hat{\Lambda}
  <\Upsilon\beta^{-1/3}+\alpha^2\beta^{-1}/2 $ (or equivalently for $\Re\lambda<\Upsilon\beta^{-1/3}-\alpha^2\beta^{-1}/2$),
$\B_{\lambda,\alpha,\beta}^\D$ is invertible and
   \begin{equation}
 \label{eq:9}
  \big\|(\B_{\lambda,\alpha,\beta}^\D)^{-1}\big\|+
        \Big\|\frac{d}{dx}\, (\B_{\lambda,\alpha,\beta}^\D)^{-1}\Big\|\leq
          C\,  \beta^{-5/6}  \,.
   \end{equation}
\end{theorem}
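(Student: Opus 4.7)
\medskip
\noindent\emph{Strategy.} The plan is to exploit the factorisation built into \eqref{eq:6},
\begin{equation*}
\B_{\lambda,\alpha,\beta}\;=\;\A_{\lambda,\beta}\Big(\tfrac{d^2}{dx^2}-\alpha^2\Big)\;-\;i\beta U^{\prime\prime},\qquad \A_{\lambda,\beta}:=\LL_\beta-\beta\lambda,
\end{equation*}
and to reduce inversion of the fourth-order operator $\B_{\lambda,\alpha,\beta}^\D$ to a second-order problem in which the inviscid operator $\Kg_\nu$ appears at leading order. The complex Schr\"odinger operator $\A_{\lambda,\beta}$ is the object analysed by Airy-function techniques in \cite{AH1}, from which I would import the resolvent bounds
\begin{equation*}
\|(\A_{\lambda,\beta}^\D)^{-1}\|=\mathcal{O}(\beta^{-1/3}),\qquad \big\|\tfrac{d}{dx}(\A_{\lambda,\beta}^\D)^{-1}\big\|=\mathcal{O}(\beta^{-1/6}),
\end{equation*}
valid precisely on the region $\Re\lambda<\Upsilon\beta^{-1/3}+\alpha^2\beta^{-1}/2$ appearing in the statement. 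Combining these with the spectral hypothesis \eqref{eq:8} on $\Kg_\nu^\D$ is the heart of the argument.

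\medskip
\noindent\emph{Reduction to a second-order problem.} Given $f\in L^2(-1,1)$, I would decompose the unknown as $\phi=\phi_{\rm out}+\phi_{\rm bl}^++\phi_{\rm bl}^-$, with $\phi_{\rm out}$ subject to the Dirichlet condition $\phi_{\rm out}(\pm 1)=0$ only, and the correctors $\phi_{\rm bl}^\pm$ being boundary-layer profiles of width $\mathcal{O}(\beta^{-1/3})$ near $x=\pm 1$, built from the decaying Airy-type solutions that govern the leading-order equation in each layer. Their two free amplitudes are fixed by a $2\times 2$ linear system enforcing $\phi'(\pm 1)=0$, whose non-degeneracy follows from standard Airy asymptotics. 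Applying $(\A_{\lambda,\beta}^\D)^{-1}$ to the factorised equation and then inverting the Helmholtz operator $-\tfrac{d^2}{dx^2}+\alpha^2$ on $H^1_0(-1,1)$ converts $\B_{\lambda,\alpha,\beta}^\D\phi=f$ into a fixed-point problem for the outer part:
\begin{equation*}
\phi_{\rm out}-\Mg_{\lambda,\alpha,\beta}\,\phi_{\rm out}=g_{\lambda,\alpha,\beta},\qquad \Mg_{\lambda,\alpha,\beta}:=\Big(\tfrac{d^2}{dx^2}-\alpha^2\Big)^{-1}\!(\A_{\lambda,\beta}^\D)^{-1}\bigl(i\beta U^{\prime\prime}\,\cdot\,\bigr),
\end{equation*}
with source $g_{\lambda,\alpha,\beta}$ (collecting $f$ and the boundary-layer contributions) satisfying $\|g_{\lambda,\alpha,\beta}\|=\mathcal{O}(\beta^{-5/6}\|f\|)$.

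\medskip
\noindent\emph{The main difficulty.} It remains to show $I-\Mg_{\lambda,\alpha,\beta}$ is boundedly invertible uniformly in $\beta$ large. Away from the critical set $\Dg$, the family $(\A_{\lambda,\beta}^\D)^{-1}(i\beta\,\cdot\,)$ converges to multiplication by $(U-\nu)^{-1}$ with $\nu:=-i\lambda$, so formally
\begin{equation*}
I-\Mg_{\lambda,\alpha,\beta}\;\longrightarrow\;-\Big(\tfrac{d^2}{dx^2}-\alpha^2\Big)^{-1}\bigl(\Kg_\nu+\alpha^2\bigr),
\end{equation*}
which is boundedly invertible by \eqref{eq:8} and $\alpha^2\geq 0$. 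The hard subcase is $\nu\in\Dg$, i.e.\ when $\Im\lambda$ is close to some $U(x_\nu)$ with $U^{\prime\prime}(x_\nu)=0$; there $(U-\nu)^{-1}$ is singular at $x_\nu$ and must be replaced by an Airy inner expansion on a critical layer of width $\beta^{-1/3}$ about $x_\nu$, matched with the outer inviscid approximation. The $C^4$ regularity of $U$ and the simple zero of $U^{\prime\prime}$ at $x_\nu$ ensure that $U^{\prime\prime}/(U-\nu)$ extends continuously across the layer, so the matching is possible, and strict positivity of $\Kg_\nu^\D$ prevents the outer problem from developing a singular mode. Making this matched-asymptotic reduction quantitatively uniform in $(\lambda,\alpha,\beta)$ is, in my view, the principal obstacle. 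Once it is done, a Neumann series gives $(I-\Mg_{\lambda,\alpha,\beta})^{-1}=\mathcal{O}(1)$, which combined with the source estimate yields \eqref{eq:9}; the derivative bound follows from the same scheme using the $\mathcal{O}(\beta^{-1/6})$ estimate on $\tfrac{d}{dx}(\A_{\lambda,\beta}^\D)^{-1}$.
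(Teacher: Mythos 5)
Your strategy captures some of the right structural ideas — the factorization of $\B_{\lambda,\alpha,\beta}$, the use of the Airy resolvent estimates, and the role of the positivity of $\Kg_\nu^\D$ — but the central step, showing that $I-\Mg_{\lambda,\alpha,\beta}$ has a uniformly bounded inverse via a Neumann series, does not go through, and this is not a gap that can be patched: it is the entire difficulty.

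The operator $(\LL_\beta^D-\beta\lambda)^{-1}(i\beta\,\cdot)$ does \emph{not} converge to multiplication by $(U-\nu)^{-1}$ in $L^2\to L^2$ operator norm (nor in any norm that would feed a Neumann series). Both operators have norms that blow up as $\beta\to\infty$; the convergence statements available are pointwise/weak ones in $L^1$, as in Proposition~\ref{Dirichlet-L1-H1-0}, which is nowhere near enough to conclude $\|\Mg_{\lambda,\alpha,\beta}-\Mg_\infty\|<1$ uniformly. In the paper the uniform invertibility is obtained quite differently: one establishes \emph{a priori} resolvent estimates by painstaking decomposition of $g_D$, $\tilde v_D$, $u_D$ etc.\ (Lemma~\ref{lem:aux}), uses the Rayleigh inverse estimates from Section~\ref{sec:2} (Propositions~\ref{prop:inviscid-boundedness-1} and \ref{cor:strip-bounded-inverse}), and then closes the argument for $\Re\lambda\leq-\mu_0$ by a compactness--contradiction argument that invokes Theorem~\ref{thm:invicid} on the Rayleigh spectrum. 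There is no small parameter in which to expand a Neumann series.

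A second, independent gap: your source-term estimate $\|g_{\lambda,\alpha,\beta}\|=\mathcal{O}(\beta^{-5/6}\|f\|)$ plus $(I-\Mg)^{-1}=\mathcal O(1)$ would yield the $\beta^{-5/6}$ bound throughout the strip, but the direct estimate (Proposition~\ref{lem:no-slip-convex-U}, (\ref{eq:68}a)) only gives $\mathcal{O}(\beta^{-1/2})$ in the interior of the strip; the sharper $\mathcal{O}(\beta^{-5/6})$ is established only on the boundary line $\Re\lambda=\Upsilon\beta^{-1/3}$ ((\ref{eq:68}b)) and is then propagated to the whole half-plane via the Phragm\'en--Lindel\"of theorem (Step~2 of Section~\ref{sec:proof}), combined with the $\mathcal{O}(1)$ bound for $\Re\lambda\leq -\mu_0$ and the decay at large $|\lambda|$ from Proposition~\ref{lem:large-lambda}. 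Your proposal makes no use of this mechanism; without it the conclusion would not reach the stated strength. Finally, two smaller issues: the quoted Schr\"odinger resolvent bounds should be $\mathcal{O}(\beta^{-2/3})$ and $\mathcal{O}(\beta^{-1/3})$, not $\mathcal{O}(\beta^{-1/3})$ and $\mathcal{O}(\beta^{-1/6})$ (Lemma~\ref{lem:standard}); and the theorem does not assume the zero of $U''$ at $x_\nu$ is simple — boundedness of $U''/(U-\nu)$ follows from $|U'|\geq{\bf m}$ alone, so invoking a simple-zero hypothesis introduces an assumption not present in the statement.
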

It should be mentioned that similar results are obtained in
\cite[Proposition 5.1]{chenetal23} for $1\leq\alpha$.  More precisely,
consider the Dirichlet realization $\Rg_\alpha^D$ of the operator (see
\cite{chenetal23,weetal18})
\begin{equation}
\label{eq:10}
\Rg_\alpha^D=\Big(-\frac{d^2}{dx^2}+\alpha^2\Big)^{-1}\Big[U\Big(-\frac{d^2}{dx^2}+\alpha^2\Big)
+U^{\prime\prime}\Big] \,, 
\end{equation}
where $(-d^2/dx^2+\alpha^2)^{-1}$ denotes the inverse of the Dirichlet
realization of \break $(-d^2/dx^2+\alpha^2)$ in $(-1,+1)$.\\
 Obviously, $ \Rg_\alpha^D$
is a bounded operator on  $H^2(-1,1)\cap H^1_0(-1,1)$.  

For a monotone shear flow, under the condition that $ \Rg_\alpha^D$ does
not have any embedded eigenvalues in the essential spectrum (Note that
$\sigma_{ess}(\Rg_\alpha^D)=[U(-1),U(1)]$ \cite{weetal18,st95}) or isolated
eigenvalues, it is proved in \cite{chenetal23} that $ \B_{\lambda,\alpha,\beta}^\D$
is invertible for $\Re\hat{\Lambda} <\Upsilon\beta^{-1/3}$ (with weaker bounds for the
inverse than in \eqref{eq:9}) and then some semigroup estimates are
deduced.

We note in addition that using \eqref{eq:9} we may proceed as in \cite[Section
9]{AH1} to obtain semigroup estimates as
well.  \\
Furthermore, for $U$ satisfying \eqref{eq:5}, the requirement
\eqref{eq:8} guarantees that  $\Rg_\alpha^\D$ does not  possess
any eigenvalues (embedded or ordinary).

It should also be noted that the definition of the Rayleigh operator below in
\eqref{eq:11} is different from the definition in
\cite{chenetal23}. More precisely we have
\begin{displaymath}
 \Rg_\alpha^D=\Big(-\frac{d^2}{dx^2}+\alpha^2\Big)^{-1}\A_{0,\alpha}^\D \mbox{ on } H^2 \cap H_0^1 \,. 
\end{displaymath}
Hence, if $(\phi,i\lambda)$ is an eigenpair of $\Rg_\alpha^D$ then $\phi\in{\rm
  ker}\,\A_{\lambda,\alpha}\cap H^2(-1,1)$. 

 In recent years there has been significant progress in the study of
the Orr-Sommerfeld operator \cite{Orr1907}, see
\cite{chen2020transition,AH1,AH2,jia2023uniform,chen2024enhanced} to
name just a few of the works addressing the linear operator only. A
significant body of literature deals with weakly non-linear analysis
of the laminar flow, see for instance
\cite{BGM,Mas17,chen2024transition}. Another recent work of interest
is \cite{jezequel2025orr} where the authors show (for $\alpha\gtrsim1$) that existence of
eigenvalues for the Orr-Sommerfeld operator depend on their existence
as eigenvalues of  the corresponding Rayleigh operator except for
the cases $\lambda=iU(\pm1)$. 

It should be clear for the reader at this stage that analysis of the
Rayleigh operator is of great interest when attempting to locate the
spectrum of the Orr-Sommerfeld operator in the large $\beta$ limit. We
mention here a few relevant works. In \cite{li03,li05} it is shown
that an eigenvalue of the Rayleigh operator embedded in the continuous
spectrum can exist only in the set $\Dg$ defined above. In
\cite{hietal14} it is demonstrated for holomorphic monotone flows and
some additional conditions, which we list here towards the end of the
next section, that eigenvalues for the Rayleigh operators can exist
only if \eqref{eq:8} is satisfied.

The rest of the contribution is arranged as follows: In the next
section we consider the Rayleigh operator and obtain for it some
inverse estimates away from its continuous spectrum. We also extend
the results in \cite{hietal14} to any $U\in C^3$ satisfying \eqref{eq:5}.
In Section \ref{sec:3} we repeat some results obtained for the resolvent of the
Schr\"odinger operator \eqref{eq:p29} in \cite{AH1,AH2} and obtain
some new estimates for it. In Section \ref{sec:4} we consider the Orr-Sommerfeld
operator and obtain for it inverse estimate in various subsets of the
parameter space. Finally, in the last section we complete the proof of
Theorem \ref{thm:orr}.

\section{Rayleigh estimates}
\label{sec:2}
We consider
$ U\in C^3([-1,1])$ satisfying $U^\prime\neq0$ on $[-1,1]$. Assume without any loss
of generality that $U^\prime>0$. We recall that, for $\nu\in[U(-1),U(1)]$, $x_\nu$ is the unique solution of 
 $U(x_\nu)=\nu$. 
For $\lambda=\mu+i\nu$, the Rayleigh operator (see 
\cite{AH1}) is the realization of
\begin{equation}
    \label{eq:11}
\A_{\lambda,\alpha}^D=(U+i\lambda)\Big(-\frac{d^2}{dx^2}+\alpha^2\Big) +U^{\prime\prime} \,,
\end{equation}
whose domain  is defined, for $\mu\neq0$ or   $\mu=0$, $\nu \not\in [U(-1),U(1)]$  on $H^2(-1,1)\cap H^1_0(-1,1)$
and  when $\mu=0$, $\nu \in [U(-1),U(1)]$ on the  set
\begin{equation}
\label{eq:12}
  D(\A_{i\nu,\alpha}^\D)= H^2((-1,1);|U-\nu|^2dx)\cap H^1_0(-1,1)\,.
\end{equation}
\begin{remark}
\label{rem:fredholm}
  It is proved in \cite[Proposition 4.13]{AH1} that that Fredholm
index $\A_{\lambda,\alpha}^D$ for $|\mu|>0$ is zero. Thus, proving its injectivity
would imply its invertibility as well.
\end{remark}

 We now discuss the invertibility of  $\A_{\lambda,\alpha}^\D$ and obtain estimates for $(\A_{\lambda,\alpha}^\D) ^{-1}$.
\begin{proposition}
  \label{prop:inviscid-boundedness-1} For any $p >1$ there exist
  positive $\mu_0$ and $C$, such that for any $\lambda=\mu + i \nu \in\C$ for which
  $\nu\in[U(-1),U(1)]$, $U''(x_\nu)\neq 0$, $0<|\mu|\leq \mu_0\, |U''(x_\nu)|$, and $\alpha\geq0$,
  $\A_{\lambda,\alpha}^\D$ is invertible and moreover, satisfy, for all $v\in
  W^{1,p}(-1,1)$,
\begin{subequations}
\label{eq:13}
  \begin{equation}
\frac{|U^{\prime\prime}(x_\nu)|}{\log\, (|\mu|^{-1} )}  \|(\A_{\lambda,\alpha}^\D)^{-1}v\|_{1,2} \leq C \, \|v\|_\infty \,,
\end{equation}
\begin{equation}
|U^{\prime\prime}(x_\nu)|\,\|(\A_{\lambda,\alpha}^\D)^{-1}v\|_{1,2} \leq C \, (\|v^\prime\|_p+\|v\|_\infty)\,,
\end{equation}
and 
\begin{equation}
|U^{\prime\prime}(x_\nu)|\,|\mu|^{1/p}\|(\A_{\lambda,\alpha}^\D)^{-1}v\|_{1,2} \leq C\, \|v\|_p\,.
\end{equation}
\end{subequations}
Similar estimates hold for $\nu\in\R\setminus[U(-1),U(1)]$ in which case we set
$|U^{\prime\prime}(x_\nu)|=1$ in \eqref{eq:13}. 
\end{proposition}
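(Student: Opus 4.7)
The plan is to reduce $\A_{\lambda,\alpha}^\D u = v$ to Rayleigh form by dividing through by $U+i\lambda = (U-\nu) + i\mu$, giving
\begin{equation*}
  -u'' + \alpha^2 u + \frac{U''}{(U-\nu) + i\mu}\, u \;=\; g, \qquad g:=\frac{v}{(U-\nu)+i\mu},
\end{equation*}
and then to perform an energy estimate by testing against $\bar u$. The real/imaginary splitting decouples a ``critical-layer'' identity from a principal-value energy identity, and these two will supply all the needed bounds. In the trivial case $\nu\notin[U(-1),U(1)]$, the coefficient $(U-\nu)+i\mu$ is bounded below, so all three inequalities in \eqref{eq:13} are routine by standard elliptic theory; hence I focus on the case $\nu\in[U(-1),U(1)]$ with $U''(x_\nu)\neq 0$.

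The heart of the argument is the imaginary part of the tested equation,
\begin{equation*}
   -\int_{-1}^{1}\frac{\mu\, U''(x)}{(U-\nu)^2+\mu^2}\,|u|^2\,dx \;=\; \mathrm{Im}\int_{-1}^{1} g\,\bar u\,dx.
\end{equation*}
Because $U'>0$ and $U''$ is continuous, the kernel $\mu/((U-\nu)^2+\mu^2)$ is an approximate identity centered at $x_\nu$ with mass $\pi/U'(x_\nu)+O(\mu)$, so for $|\mu|\leq\mu_0|U''(x_\nu)|$ with $\mu_0$ small one extracts a bound of the form $|U''(x_\nu)|\,|u(x_\nu)|^2 \leq C(\text{RHS} + \text{error})$, where the error is absorbable into the $H^1$ norm via Sobolev embedding in one dimension. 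Taken at $v=0$, this forces $u(x_\nu)=0$, after which the ODE has a removable singularity at $x_\nu$ and a Wronskian / Sturmian argument on $[-1,x_\nu]$ and $[x_\nu,1]$ forces $u\equiv 0$. Injectivity plus Remark \ref{rem:fredholm} yields invertibility.

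The real part reads
\begin{equation*}
  \int_{-1}^1\bigl(|u'|^2+\alpha^2|u|^2\bigr)\,dx + \int_{-1}^{1}\frac{U''(U-\nu)}{(U-\nu)^2+\mu^2}\,|u|^2\,dx = \mathrm{Re}\int_{-1}^1 g\,\bar u\,dx,
\end{equation*}
and the crucial observation is that the singular integral $\int \frac{U''(U-\nu)}{(U-\nu)^2+\mu^2}\,dx$ grows only like $(U''(x_\nu)/U'(x_\nu))\log|\mu|^{-1}$, which is the source of the logarithm in (13a). Combining this with the point control on $|u(x_\nu)|^2$ from the imaginary part (to absorb the singular piece where $|U-\nu|\leq |\mu|$) and the one-dimensional embedding $\|u\|_\infty\lesssim\|u\|_{1,2}$ gives, after absorption, $|U''(x_\nu)|\|u\|_{1,2}\leq C\log|\mu|^{-1}\,|\mathrm{Re}\int g\bar u|^{1/2} + \cdots$, and bounding $|\int g\bar u|\leq\|v\|_\infty\|(U-\nu+i\mu)^{-1}\|_1\,\|u\|_\infty\lesssim\|v\|_\infty\log|\mu|^{-1}\|u\|_{1,2}$ produces (13a) after one more absorption.

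For (13c), I would simply use Hölder with $\|(U-\nu+i\mu)^{-1}\|_{p'}\lesssim |\mu|^{-1/p}\,|U'(x_\nu)|^{-1/p}$, which introduces the $|\mu|^{1/p}$ on the left. The trick for (13b) is to avoid the $|\mu|^{1/p}$ loss by integrating by parts against a logarithmic primitive of $((U-\nu)+i\mu)^{-1}$: writing $((U-\nu)+i\mu)^{-1} = (1/U')\,\partial_x\log((U-\nu)+i\mu)$ and moving the derivative onto $v\bar u/U'$ yields a bound involving $\|v'\|_p$ plus boundary contributions controlled by $\|v\|_\infty$, at the cost of only a $\log|\mu|^{-1}$ factor which is then reabsorbed into the constant (or more efficiently handled by carefully combining with the imaginary-part bound). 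I expect the main obstacle to be making the ``critical-layer'' absorption fully quantitative, i.e.\ ensuring that the error terms produced when replacing $U''(x)$ by $U''(x_\nu)$ in the concentration integral, and when trading $\|u\|_\infty$ against $\|u\|_{1,2}$, are strictly smaller than the $|U''(x_\nu)|$ factor on the left-hand side; this is what forces the smallness condition $|\mu|\leq\mu_0|U''(x_\nu)|$ in the hypothesis.
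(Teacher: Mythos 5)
Your overall strategy (reduce to the Rayleigh form by dividing by $U+i\lambda$, extract pointwise control at $x_\nu$ from the imaginary part of the tested equation, deduce invertibility from injectivity via Remark~\ref{rem:fredholm}) matches the paper's high-level plan, and your treatment of the $\log|\mu|^{-1}$ and $|\mu|^{1/p}$ factors via the $L^1$ and $L^{p'}$ norms of $(U-\nu+i\mu)^{-1}$ and the integration-by-parts against $\log(U+i\lambda)$ is precisely what the paper does. However, there are two genuine gaps in the middle of the argument.

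First, the injectivity argument does not work as stated. For fixed $\mu\neq 0$ and $v=0$, the imaginary-part identity gives $\int \mu\,U''|u|^2/[(U-\nu)^2+\mu^2]\,dx=0$, but the weight $U''$ is not sign-definite, so this does not force $u(x_\nu)=0$; a careful quantitative version only yields an inequality of the form $|U''(x_\nu)|\,|u(x_\nu)|^2\lesssim |\mu|^{1/2}\|u\|_{1,2}^2+\ldots$, which vanishes only in the limit $\mu\to 0$. Moreover, even granting $u(x_\nu)=0$, there is no singularity to remove at $x_\nu$ when $\mu\neq 0$, and the equation has complex coefficients, so the proposed Wronskian/Sturmian argument has no foothold. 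In the paper, injectivity falls out directly from the quantitative bound $|U''(x_\nu)|\,\|\phi\|_{1,2}\leq C\,N^\pm_{m,p}(v,\lambda)$ established in Steps 1--5, not from a separate Sturm--Liouville argument.

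Second, the real-part energy estimate does not close. After writing $|u|^2=|u(x_\nu)|^2+(|u|^2-|u(x_\nu)|^2)$ inside $\int U''(U-\nu)[(U-\nu)^2+\mu^2]^{-1}|u|^2$, the second piece is bounded by a constant times $\|u'\|_2\|u\|_\infty\lesssim\|u\|_{1,2}^2$, with a constant depending on $\|U''\|_\infty$ and $1/\mathbf m$ that has no reason to be small; this term is of exactly the same order as the left-hand side $\|u'\|_2^2+\alpha^2\|u\|_2^2$ and cannot be absorbed. The paper's device for avoiding this is essential: after cutting off the value at the critical layer, $\phi=\varphi+\phi(x_\nu)\chi_d$, one sets $w=\varphi/(U-\nu)$ and observes the algebraic identity
\begin{equation*}
(U-\nu)\bigl(-\varphi''\bigr)+U''\varphi \;=\; -\tfrac{d}{dx}\!\bigl[(U-\nu)^2 w'\bigr]\,,
\end{equation*}
so that testing against $w$ produces $\|(U-\nu)w'\|_2^2+\alpha^2\|\varphi\|_2^2$ on the left with \emph{no} residual potential term (this is precisely \eqref{eq:23}). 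Recovering $\|\phi'\|_2$ from $\|(U-\nu)w'\|_2$ then uses Hardy's inequality and the pointwise estimate \eqref{eq:14}, followed by a careful bootstrap in $\varepsilon_1$ and the smallness $|\mu|\leq\mu_0|U''(x_\nu)|$. Your sketch also predicts a weaker final bound than \eqref{eq:13}: chaining your claimed inequalities produces extra powers of $\log|\mu|^{-1}$ and of $|U''(x_\nu)|$ beyond what is stated. Finally, the case analysis for large $\alpha$ and for $x_\nu$ near $\pm1$ (Steps 3--4 in the paper) is nontrivial and is not addressed in your proposal.
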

\begin{proof}
The proof is very similar to the proof of \cite[Proposition
4.14]{AH1}.  Since \eqref{eq:13} is trivial for $\nu \in \Dg$, we can assume throughout the proof that
$U''(x_\nu)\neq 0$. \\
  {\em Step 1:} {\it For  $1<p$ and $\mu \neq 0$ define $N_{m,p}^\pm$ by
\begin{displaymath}
  v \mapsto N_{m,p}^\pm(v,\lambda) := \min \Big(\Big\|(1\pm\cdot)^{1/2}\frac{v}{U+i\lambda}\Big\|_1,\|v\|_{1,p}\Big)\,.
\end{displaymath}
We prove that for all $p>1$ there exists $C>0$ such that,
for all $\varepsilon>0$, $\nu\in[U(-1),U(1)]$, and $0<|\mu|\leq 1$ it holds that 
 \begin{equation}
\label{eq:14} 
   |\phi(x_\nu)| \leq C\Big(\frac{\varepsilon^{-1/2}}{|U^{\prime\prime}(x_\nu)|} N_{m,p}^\pm (v,\lambda) +
   \Big(\Big|\frac{\mu}{U^{\prime\prime}(x_\nu)}\Big|^{1/2} +\varepsilon^{1/2}\Big)\|\phi^\prime\|_2\Big)\,,
 \end{equation}
 for all pairs $(\phi,v)\in D(\A_{\lambda,\alpha}^\D)\times W^{1,p}(-1,1)$ satisfying
 $\mathcal A_{\lambda,\alpha} \phi =v$.\\ \vspace{2ex}} 

We follow the same arguments of step 1 in the proof of
\cite[Proposition 4.14]{AH1}, paying special attention to the
dependence of the various constants on $U''(x_\nu)$.  An integration by
parts yields
\begin{displaymath}
 \Im\Big\langle\phi,\frac{v}{U-\nu+i\mu}\Big\rangle=-\mu\, \Big\langle\frac{U^{\prime\prime}}{(U-\nu)^2+\mu^2}\phi
 ,\phi\Big\rangle\,,
\end{displaymath}
which we can rewrite in the form
\begin{equation}
\label{eq:15}
  \Im\Big\langle\phi,\frac{v}{U-\nu+i\mu}\Big\rangle=-\mu\,
  \Big\langle\frac{U^{\prime\prime}(x_\nu)}{(U-\nu)^2+\mu^2}\phi ,\phi\Big\rangle-\mu\,
  \Big\langle\frac{[U^{\prime\prime}(x)-U^{\prime\prime}(x_\nu)]}{(U-\nu)^2+\mu^2}\phi ,\phi\Big\rangle \,.
\end{equation}
To estimate the last term we use an integration by parts to obtain 
\begin{displaymath}
  \Big\langle\frac{[U^{\prime\prime}(x)-U^{\prime\prime}(x_\nu)]}{(U-\nu)^2+\mu^2}\phi ,\phi\Big\rangle=
  \Big\langle\Big(\frac{U^{\prime\prime}(x)-U^{\prime\prime}(x_\nu)}{U^\prime(U-\nu)}|\phi|^2\Big)^\prime,\log[(U-\nu)^2+\mu^2]\Big\rangle\,,
\end{displaymath}
from which we conclude, using \eqref{eq:5}, the fact that
$\|\log[(U-\nu)^2+\mu^2]\|_q$ is uniformly bounded for
$(\mu,\nu)\in[-1,1]\times[U(-1),U(1)]$ for $q>1$, and Poincar\'e's inequality that
\begin{equation}
  \label{eq:16}
 \Big|\Big\langle\frac{[U^{\prime\prime}(x)-U^{\prime\prime}(x_\nu)]}{(U-\nu)^2+\mu^2}\phi ,\phi\Big\rangle\Big|\leq
 C\|\phi\|_\infty\|\phi^\prime\|_2
\end{equation}
As 
\begin{displaymath}
  |\phi (x)|^2\geq \frac{1}{2}|\phi(x_\nu)|^2 -
  |\phi(x)-\phi(x_\nu)|^2 \,,
\end{displaymath}
we may use \eqref{eq:15} and \eqref{eq:16} to obtain
\begin{multline} 
\label{eq:17}
 |  \Im\Big\langle\phi,\frac{v}{U-\nu+i\mu}\Big\rangle| \geq \\|\mu U^{\prime\prime}(x_\nu)| \,
   \Big\langle\frac{1}{(U-\nu)^2+\mu^2},\frac 12 |\phi(x_\nu)|^2 - |\phi(x)-\phi(x_\nu)|^2
   \Big\rangle\\ -C|\mu|\,\|\phi\|_\infty\|\phi^\prime\|_2\,.
\end{multline}
We note that, for any $1<p$, there exists $C>0$ such that
\begin{displaymath}
\begin{array}{ll}
    \Big|\Big\langle\phi,\frac{v}{U+i\lambda}\Big\rangle\Big|&  =
    \Big|\Big\langle\Big(\frac{\phi\bar{v}}{U^\prime}\Big)^\prime,\log\,(U+i\lambda)\Big\rangle\Big| \\
   & \leq C\, \left(\|\phi^\prime\|_2\|v\|_\infty+\|\phi\|_\infty\|v^\prime\|_p \right)\\ & \leq C\, \|\phi^\prime\|_2\|v\|_{1,p}\,.
   \end{array}
  \end{displaymath}
  Here, as above, we use the boundedness of the $L^q$ norm of
  $\log[(U-\nu)^2+\mu^2]$ for $q=p/(p-1)$.  On the other hand, since
\begin{displaymath}
|\phi(x)|=|\phi (x)-\phi(\pm 1)| \leq \| \phi^\prime\|_2 (1\pm x)^\frac
    12\,, 
\end{displaymath} 
we may conclude that
\begin{displaymath}
      \Big|\Big\langle\phi,\frac{v}{U+i\lambda}\Big\rangle\Big| \leq 
      \|\phi^\prime\|_2 \,
      \Big\|(1\pm\cdot)^{1/2}\, \frac{v}{U+i\lambda}\Big\|_1 
\end{displaymath}
and hence, there exists $C>0$, such  that
\begin{equation}
  \label{eq:18}  
\Big|\Big\langle\phi,\frac{v}{U+i\lambda}\Big\rangle\Big| \leq  C\, \|\phi^\prime\|_2 \, N_{m,p}^\pm(v,\lambda) \,.
\end{equation}
Substituting the above into \eqref{eq:17} yields
\begin{equation*}  
\begin{array}{ll}
  \frac{|\mu U^{\prime\prime}(x_\nu)| }{2} |\phi(x_\nu)|^2 \Big\|\frac{1}{U+i\lambda}\Big\|_2^2 &\leq
  |\mu U^{\prime\prime}(x_\nu)|  \Big\|\frac{\phi-\phi(x_\nu)}{U+i\lambda}\Big\|_2^2 \\ &\qquad  +C|\mu|\ \|\phi\|_\infty\|\phi^\prime\|_2
  +  C\|\phi^\prime\|_2 N_{m,p}^{\pm} (v,\lambda)\,.
  \end{array}
\end{equation*}
We now observe, as in the proof of
 \cite[Eq. (4.60)]{AH1}, that for some $\hat C >0$ 
\begin{displaymath}
  \Big\|\frac{1}{U+i\lambda}\Big\|_2^2 \geq   \frac {1}{\hat C\,|\mu|} \,.
\end{displaymath}
 Hence, for another constant $C>0$, we get
\begin{multline}\label{eq:19}
   |\phi(x_\nu)|^2 \leq C\Big[ |\mu|
   \Big\|\frac{\phi-\phi(x_\nu)}{U+i\lambda}\Big\|_2^2\\
   +\Big|\frac{\mu}{U^{\prime\prime}(x_\nu)}\Big|\,\|\phi\|_\infty\|\phi^\prime\|_2 +\frac{1}{|U^{\prime\prime}(x_\nu)|} \|\phi^\prime\|_2N_{m,p}^\pm(v,\lambda)\Big]\,.
\end{multline}
To estimate the first  term on the right-hand-side of
\eqref{eq:19} we use Hardy's inequality, as in the proof of
\cite[Eq. (4.57)]{AH1} (see the end of the proof after (4.60)),  to obtain
\begin{equation}
\label{eq:20}
\Big\|\frac{\phi-\phi(x_\nu)}{U+i\lambda}\Big\|_2^2 \leq C \,   \| \phi^\prime\|_2^2\,,
\end{equation}
which when substituted into \eqref{eq:19} readily yields \eqref{eq:14}
via Cauchy's inequality and Sobolev embeddings.  \vspace{1ex}

{\em Step 2:} {\it For $\nu \in (U(-1),U(+1))$, let
  $d_\nu=\min(1-x_\nu,1+x_\nu)$. We prove that for any $A >0$,  $p>1$
  and $\hat d >0$, there exist $C$ and $\mu_0$ such that, for $\alpha^2 \leq A$,
  $ |\mu|\leq \mu_0\, |U^{\prime\prime}(x_\nu)|$, and $d_\nu \geq \hat d$\,,
\begin{equation}
  \label{eq:21}
\|\phi\|_{1,2}\leq  \frac{C}{|U^{\prime\prime}(x_\nu)|}\, N_{m,p}^\pm(v,\lambda)\,.
\end{equation}
holds for any pair $(\phi,v)$ in $D(\A_{\lambda,\alpha}) \times W^{1,p}(-1,1)$ satisfying
$\mathcal A_{\lambda,\alpha}\, \phi=v$. }
\vspace{3ex}
\\
As in step 1 we follow now step 2 in the proof of \cite[Proposition
4.14]{AH1}, paying special attention to the dependence of
constants on $U''(x_\nu)$.\\
\noindent Let
$\chi\in C_0^\infty(\R,[0,1])$ satisfy 
\begin{displaymath}
  \chi(x)=
  \begin{cases}
    1 & |x|<1/2 \\
    0 & |x|>3/4 \,.
  \end{cases}
\end{displaymath}
Let $\chi_d(x) = \chi((x-x_\nu)/d)$ (with $d=d_\nu$) and set
\begin{equation}
\label{eq:22}
  \phi=\varphi + \phi(x_\nu)\chi_d \,.
\end{equation}
Note that by the choice of $d$, $\varphi$ satisfies also the boundary condition at $\pm 1\,$.\\
It can be easily verified that
\begin{displaymath}
  \A_{\lambda,\alpha}\varphi =v + \phi(x_\nu)\big((U+i\lambda)(\chi_d^{\prime\prime}-\alpha^2\chi_d) -U^{\prime\prime}\chi_d\big)
  \,.
\end{displaymath}
By construction,  $w:=(U-\nu)^{-1}\varphi$ belongs to $H^2(-1,+1)$. As in \cite[Eq.
(4.63)]{AH1} we show that
\begin{multline} 
\label{eq:23}
  \|(U-\nu)w^\prime\|_2^2 + \alpha^2\|\varphi\|_2^2 =\Big\langle \varphi, \frac{v}{U+i\lambda}\Big\rangle -\langle w,\phi(x_\nu)U^{\prime\prime}\chi_d\rangle\\
  +  \phi(x_\nu)\langle\varphi,\chi_d^{\prime\prime}-\alpha^2\chi_d\rangle  +i\mu\Big\langle w,\frac{U^{\prime\prime}\phi}{U+i\lambda}\Big\rangle\,.
\end{multline}
By \eqref{eq:20} (with $\phi$ replaced by $\varphi$) we have that
\begin{equation}
\label{eq:24}
\Big|\Big\langle \varphi, \frac{v}{U+i\lambda}\Big\rangle\Big|\leq  C \|\varphi^\prime\|_2\, \|v\|_2\leq
\hat C\, (\|\phi^\prime\|_2+d^{-1/2}|\phi(x_\nu)|)\|v\|_2 \,.
\end{equation}
We continue in the same manner as in the proof following
\cite[Eq. (4.69)]{AH1} to obtain for any $\varepsilon_1\in(0,1)$, 
\begin{equation}
  \label{eq:25} 
  \|(U-\nu)w^\prime\|_2^2 + \alpha^2\|\varphi\|_2^2 \leq  C \, \Big((\varepsilon_1+|\mu|^{1/2})\|\phi^\prime\|_2^2+ 
  \frac{1}{\varepsilon_1d}|\phi(x_\nu)|^2 +\varepsilon_1^{-1}N_{m,p}^\pm(v,\lambda)^2\Big)\,.
\end{equation}
By Hardy's inequality, Poincar\'e's inequality, and
\eqref{eq:14}  we obtain, for \break $0 < |\mu| \leq 1$,  $\varepsilon\in (0,1)$, and $\varepsilon_1\in (0,1)$, 
that
\begin{displaymath}
  \|w\|_2 \leq C \Big( \Big[ |\mu|^{1/4}+\varepsilon_1^{1/2}+\frac{\varepsilon^{1/2}+\Big|\frac{\mu}{U^{\prime\prime}(x_\nu)}\Big|^{1/2}}{[\varepsilon_1d]^{1/2}}\Big] \|\phi^\prime\|_{2}
 + [|U^{\prime\prime}(x_\nu)|^2 \varepsilon_1d\varepsilon]^{-1/2}N_{m,p}^\pm(v,\lambda)\Big)\,.
\end{displaymath}
Selecting $\varepsilon=\varepsilon_1^2$ and continuing as in   \cite{AH1} yields the existence of
$\mu_0>0$ such that for any $|\mu/U^{\prime\prime}(x_\nu)|\leq \mu_0$,  $\varepsilon_1\in (0,1)$, and  $d\geq \hat d$ 
\begin{displaymath}
    \|\phi^\prime\|_2\leq C(\hat d) \Big(\frac{\varepsilon_1^{-3/2}}{|U^{\prime\prime}(x_\nu)|} N_{m,p}^\pm(v,\lambda)+  (|\mu|^\frac 14 \,+  |\mu|^{\frac 12} \epsilon_1^{-\frac 12}  + \varepsilon_1^{1/2})\|\phi^\prime\|_2\Big)\,.
\end{displaymath}
Hence, we can choose first $\epsilon_1$ and  then a possibly smaller  $\mu_0 >0$ such that 
\eqref{eq:21} follows for  $|\mu|\leq \mu_0 \,  |U^{\prime\prime}(x_\nu)|$ and $d\geq \hat d$. \\

The proof of the next steps 3-5 is entirely identical with the proof
of \cite[Proposition 4.14- steps 3-5]{AH1} and is therefore skipped.
Note indeed that these steps do not rely on the assumption $U^{\prime\prime}\neq0$
made in \cite{AH1}.  Since these steps are valid for $|\mu| \leq 1$ they are
also valid for $|\mu| \leq \mu_0|U^{\prime\prime}(x_\nu)| $ by choosing
  $\mu_0<\|U^{\prime\prime}\|_\infty^{-1}$.

{\em Step 3:} There exists $\alpha_0>0$ such that 
for any $\alpha>\alpha_0$ and $|\mu|\leq 1$
\begin{displaymath}
\|\phi\|_{1,2}\leq C N_{m,p}^\pm(v,\lambda)\,.
\end{displaymath}
\vspace{2ex}

{\em Step 4:} We prove that there exist
  $\hat d_0>0$, $\mu_0>0$  and
$C >0$ such that, for all $d \leq \hat d_0$, $\nu\in[U(-1),U(1)]$, $\alpha \geq 0$  and $|\mu| \leq \mu_0$, 
\begin{equation}
\label{eq:26} 
\|\phi\|_{1,2} \leq  C\, N_{m,p}^\pm(v,\lambda)\,.
\end{equation}
holds for any pair $(\phi,v)$ such that $\mathcal A_{\lambda,\alpha} \phi=v$.
\vspace{2ex}

{\em Step 5:} We prove that there exist $C>0$ and $\mu_0>0$ such that \eqref{eq:26}
holds for all  $\nu\in\R\setminus[U(-1),U(1)]$ and $|\mu|\leq \mu_0$. 
\vspace{2ex}

{\em Step 6:} {\it Prove \eqref{eq:13}. }

In comparison with Step 6 in \cite{AH1}, we have established so far that
there exist $C$ and $\mu_0$ such that if $|\mu| \leq \mu_0 |U(x_\nu)|$ then
\begin{equation} 
\label{eq:27}
|U^{\prime\prime} (x_\nu)| \,\|\phi \|_{1,2} \leq C \, N^{\pm}_{m,p} (v,\mu + i \nu)\,.
\end{equation}
This proves the injectivity of $\A_{\lambda,\alpha}$ for $U''(x_\nu)\neq 0$ and 
  by Remark \ref{rem:fredholm} its invertibility as well.
Consequently, all we need to do is to estimate $N^{\pm}_{m,p} (v,\mu + i
\nu)$ for the derivation of \eqref{eq:13}. This can be done in precisely
the same manner as in the proof of Step 6 in \cite{AH1}.
\end{proof}
We note that in \cite[Theorem 2.3]{li05} it is proved that for $\nu_0 \in
[U(-1),U(1)]$ and ($\{\phi_n, \lambda_n,\alpha_n\}_{n\in \N}\in(H^2 (-1,1)\cap
H_0^1(-1,1))(-1,+1)\times \mathbb C\times\R_+$ such that \break $-i\lambda_n \not\in
[U(-1),U(1)]$, $\lim_{n\to +\infty}\lambda_n =i \nu_0$, $\phi_n \in \text{Ker}\,
\A_{\lambda_n,\alpha_n}^D\setminus \{0\}$ , it holds that $\nu_0\in \Dg$ (or equivalently $U'' (x_{\nu_0})
=0$). Proposition \ref{prop:inviscid-boundedness-1} gives the same
result together with an inverse estimate.

 We continue by obtaining inverse estimate for $\A_{\lambda,\alpha}$ for $\nu$
  in the vicinity of $\Dg$.
\begin{lemma}
\label{lem:positivity}
 Under Assumption \ref{eq:5}, suppose that for some $\nu_0\in \Dg$,
 \begin{displaymath}
\sigma_{\nu_0}:=\inf  \sigma(\Kg_{\nu_0}^\D) >0\,.
 \end{displaymath}
Then, for any $p>1$ there
  exists a positive $\delta$ such that for any $\lambda=\mu + i \nu$ with $0<|\mu|<\delta$
  and $|\nu-\nu_0|<\delta$, $\A_{\lambda,\alpha}^\D$ is invertible and  there exists 
  $C>0$ such that, for all $v\in W^{1,p}(-1,1)$, 
  \begin{equation}
 \label{eq:28}
\| (\A_{\lambda,\alpha}^\D)^{-1} v\|_{1,2} \leq C \, N_{m,p}^\pm(v,\lambda) \,.
  \end{equation}
\end{lemma}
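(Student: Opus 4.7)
The plan is to exploit the positivity hypothesis $\sigma_{\nu_0} := \inf \sigma(\Kg_{\nu_0}^\D) > 0$ by viewing $U''/(U-\nu+i\mu)$ as a small perturbation of $g_{\nu_0}(x) := U''(x)/(U(x)-\nu_0)$ when $|\mu|$ and $|\nu - \nu_0|$ are small. Note that $g_{\nu_0}$ is bounded (the singularity at $x_{\nu_0}$ is removable because $U''(x_{\nu_0})=0$) and, since $U\in C^4$, it belongs to $C^1([-1,1])$. Dividing the Rayleigh equation $\A_{\lambda,\alpha}^\D\phi = v$ by $U-\nu+i\mu$ (which is legal for $\mu\neq 0$) yields
\begin{equation*}
\tilde{\Kg}_{\lambda,\alpha}\,\phi := \Big(-\frac{d^2}{dx^2}+\alpha^2+\frac{U''}{U-\nu+i\mu}\Big)\phi = \frac{v}{U+i\lambda}.
\end{equation*}

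First, I would upgrade the $L^2$ spectral gap $\langle \Kg_{\nu_0}\phi,\phi\rangle \geq \sigma_{\nu_0}\|\phi\|_2^2$ to a coercive $H^1$ bound: since $g_{\nu_0}\in L^\infty$, one trivially has $\langle \Kg_{\nu_0}\phi,\phi\rangle \geq \|\phi'\|_2^2 - \|g_{\nu_0}\|_\infty \|\phi\|_2^2$, and a suitable convex combination with the $L^2$ gap furnishes a constant $c_0>0$ with
\begin{equation*}
\langle \Kg_{\nu_0}\phi,\phi\rangle \geq c_0\,\|\phi\|_{1,2}^2, \qquad \forall\phi\in H_0^1(-1,1).
\end{equation*}

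The heart of the proof is a comparison of the real parts of the two quadratic forms. A direct calculation gives, with $\eta := \nu-\nu_0$,
\begin{equation*}
\Re\langle \tilde{\Kg}_{\lambda,\alpha}\phi,\phi\rangle = \langle \Kg_{\nu_0}\phi,\phi\rangle + \alpha^2\|\phi\|_2^2 + R(\phi),
\end{equation*}
with remainder
\begin{equation*}
R(\phi) = \int_{-1}^1 g_{\nu_0}(x)\,\frac{\eta(U-\nu)-\mu^2}{(U-\nu)^2+\mu^2}\,|\phi|^2\,dx.
\end{equation*}
The main obstacle is to prove $|R(\phi)| \leq C(|\mu|+|\eta|)\|\phi\|_{1,2}^2$ uniformly in $\mu,\nu$. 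The $-\mu^2$ contribution is bounded by $C|\mu|\|\phi\|_\infty^2$ using $\int_{-1}^1 dx/[(U-\nu)^2+\mu^2] \leq C/|\mu|$ (via the change of variable $u=U(x)$ and \eqref{eq:5}) together with $H_0^1\hookrightarrow L^\infty$. The $\eta(U-\nu)$ contribution is more delicate; I would rewrite $(U-\nu)/[(U-\nu)^2+\mu^2] = (2U')^{-1}(d/dx)\log[(U-\nu)^2+\mu^2]$, integrate by parts (boundary terms vanishing by Dirichlet), and exploit the uniform $L^q$-bound on $\log[(U-\nu)^2+\mu^2]$ already used in Proposition \ref{prop:inviscid-boundedness-1}, to obtain the required $C|\eta|\|\phi\|_{1,2}^2$.

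Choosing $\delta$ small enough that $C\delta<c_0/2$, the above yields coercivity $\Re\langle \tilde{\Kg}_{\lambda,\alpha}\phi,\phi\rangle \geq (c_0/2)\|\phi\|_{1,2}^2$ for $|\mu|,|\eta|<\delta$. Since $\tilde{\Kg}_{\lambda,\alpha}\phi = v/(U+i\lambda)$, the estimate \eqref{eq:18} (valid for any $\phi\in H_0^1$) gives
\begin{equation*}
(c_0/2)\|\phi\|_{1,2}^2 \leq \Big|\Big\langle \phi,\frac{v}{U+i\lambda}\Big\rangle\Big| \leq C\,\|\phi'\|_2\,N_{m,p}^\pm(v,\lambda),
\end{equation*}
whence \eqref{eq:28}. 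The same coercivity furnishes injectivity of $\A_{\lambda,\alpha}^\D$, and invertibility then follows from Remark \ref{rem:fredholm}.
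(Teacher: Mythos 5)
Your proposal is correct and follows essentially the same route as the paper: after dividing the Rayleigh equation by $U+i\lambda$, both arguments use the spectral gap of $\Kg_{\nu_0}^\D$ to get $H^1$-coercivity (via the same convex-combination trick), treat the remaining potential difference as a small perturbation controlled by an integration by parts against a logarithm, and close with the estimate \eqref{eq:18} and Remark~\ref{rem:fredholm}. The only cosmetic difference is that you take the real part first and bound the $\mu^2$ and $\eta(U-\nu)$ contributions separately, whereas the paper keeps the complex term $U''\big(\frac{1}{U-\nu_0}-\frac{1}{U+i\lambda}\big)$ intact and estimates $\big\|\log\big(\frac{U+i\lambda}{U-\nu_0}\big)\big\|_2\leq C|\lambda-i\nu_0|^{1/2}\big|\log(1/|\lambda-i\nu_0|)\big|$ in one stroke; both yield a perturbation that vanishes as $\delta\to 0$.
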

\begin{proof}
  Let $\phi\in D(\A_{\lambda,\alpha}^\D)$ satisfy $\A_{\lambda,\alpha}\phi=v$. 
  Note that by assumption $(U-\nu_0)^{-1}U^{\prime\prime}\in C^1([-1,1])$ and let 
\begin{displaymath}
  s:=\min_{x\in[-1,1]}  \frac{U^{\prime\prime}(x)}{U(x)-\nu_0} \,.
\end{displaymath}
Then, we have 
\begin{equation}
\label{eq:29}
  \langle \phi,(\Kg_{\nu_0}+\alpha^2)\phi\rangle\geq\frac{\sigma_{\nu_0} }{ 2\sigma_{\nu_0} +|s|-s}\|\phi^\prime\|_2^2+
  \Big[\frac{\sigma_{\nu_0} }{2}+\alpha^2\Big]\|\phi\|_2^2\,.
\end{equation}

On the other hand  we can write 
  \begin{equation}
\label{eq:30}
    (\Kg_{\nu_0}+\alpha^2)\phi=\frac{v}{U+i\lambda}+U^{\prime\prime}\Big[\frac{1}{U-\nu_0}-\frac{1}{U+i\lambda}\Big] \, \phi\,.
  \end{equation}
Hence, using \eqref{eq:29} and Poincar\'e's inequality, there exists $C>0$ such that
\begin{equation}
\label{eq:31}
  \|\phi\|_{1,2}^2\leq C\Big[\Re\Big\langle\phi,\frac{v}{U+i\lambda}\Big\rangle+ \Re\Big\langle\phi,
 U^{\prime\prime}\phi\Big(\frac{1}{U-\nu_0}-\frac{1}{U+i\lambda}\Big)\Big\rangle\Big]\,.
\end{equation}
For the second term on the right-hand-side it holds that
\begin{multline}
\label{eq:32}
  \Big|\Big\langle\phi,U^{\prime\prime}\phi\Big(\frac{1}{U-\nu_0}-\frac{1}{U+i\lambda}\Big)\Big\rangle\Big|=
  \Big|\Big\langle\Big(\frac{|\phi|^2U^{\prime\prime}}{U^\prime}\Big)^\prime,\log\Big(\frac{U+i\lambda}{U-\nu_0}\Big)\Big\rangle\Big| \\
  \leq C\, \|\phi\|_\infty\|\,\|\phi^\prime\|_2\Big\|\log\Big(\frac{U+i\lambda}{U-\nu_0}\Big)\Big\|_2 \,.
\end{multline}
A simple computation yields
\begin{displaymath}
  \Big\|\log\Big(\frac{U+i\lambda}{U-\nu_0}\Big)\Big\|_2\leq C|\lambda-i\nu_0|^{1/2}\,\Big|\log\Big(\frac{1}{|\lambda-i\nu_0|} \Big)\Big|
\end{displaymath}
Substituting the above inequality, together with \eqref{eq:32} into
\eqref{eq:31} we obtain, with the aid of \eqref{eq:18} (which holds
when $U''(x_\nu)=0$ as well) and Poincar\'e's inequality,
\begin{displaymath}
  \|\phi\|_{1,2}^2\leq C\, |\lambda-i\nu_0|^{1/2}\,\Big|\log\Big(\frac{1}{|\lambda-i\nu_0|}\Big)\Big|\|\phi^\prime\|_2^2+\|\phi^\prime\|_2 \, N_{m,p}^\pm(v,\lambda) \,.
\end{displaymath}
For sufficiently small $\delta$ we readily obtain \eqref{eq:28}.  Using
Remark \ref{rem:fredholm} once again we can then conclude that
$\A_{\lambda,\alpha}$ is invertible.
invertible. 
\end{proof}

Proposition \ref{prop:inviscid-boundedness-1} and Lemma
\ref{lem:positivity} readily yield the following conclusion.
\begin{proposition}
\label{cor:strip-bounded-inverse}
Suppose that  $\inf_{\nu\in\Dg}\min \sigma(\Kg_\nu)=\sigma_0>0$. Then, for any $p>1$ there exist $\mu_0>0$ and $C>0$ such that
  for all $0<|\mu|\leq \mu_0$, $\nu\in\R$, and $\alpha\in\R_+$ \eqref{eq:28} holds true.
\end{proposition}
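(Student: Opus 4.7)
The plan is to combine Proposition \ref{prop:inviscid-boundedness-1} and Lemma \ref{lem:positivity} via a covering argument, splitting $\R$ into a neighborhood of $\Dg$ (where Lemma \ref{lem:positivity} applies) and its complement (where $|U''(x_\nu)|$ is bounded below so that Proposition \ref{prop:inviscid-boundedness-1}, specifically \eqref{eq:27}, gives \eqref{eq:28} directly).

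First I would observe that $\Dg$ is a closed, hence compact, subset of $[U(-1),U(1)]$, since $\nu\mapsto U''(x_\nu)$ is continuous by \eqref{eq:5} and the $C^4$ regularity of $U$. For each $\nu_0\in\Dg$ the hypothesis gives $\sigma_{\nu_0}\geq \sigma_0>0$, so Lemma \ref{lem:positivity} yields $\delta_{\nu_0}>0$ and $C_{\nu_0}>0$ for which \eqref{eq:28} holds on the box $\{|\nu-\nu_0|<\delta_{\nu_0},\;0<|\mu|<\delta_{\nu_0}\}$. Extracting a finite subcover of $\Dg$ by balls $B(\nu_j,\delta_{\nu_j}/2)$, $j=1,\ldots,N$, and setting $\delta_1=\tfrac12\min_j\delta_{\nu_j}$, $C_1=\max_j C_{\nu_j}$, the triangle inequality shows that every $\nu$ with $\dist(\nu,\Dg)<\delta_1$ sits in some $B(\nu_j,\delta_{\nu_j})$, so that \eqref{eq:28} holds with constant $C_1$ whenever $\dist(\nu,\Dg)<\delta_1$ and $0<|\mu|<\delta_1$.

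For the complementary region $\dist(\nu,\Dg)\geq\delta_1$, I would argue as follows. For $\nu\in[U(-1),U(1)]$ with $\dist(\nu,\Dg)\geq\delta_1$, the continuous function $\nu\mapsto|U''(x_\nu)|$ is strictly positive on a compact set, hence bounded below by some $m_1>0$; for $\nu\in\R\setminus[U(-1),U(1)]$, the convention in Proposition \ref{prop:inviscid-boundedness-1} sets $|U''(x_\nu)|=1$. Thus $|U''(x_\nu)|\geq \min(1,m_1)=:\tilde m_1$ uniformly on this region. The key estimate \eqref{eq:27} established in Step 6 of the proof of Proposition \ref{prop:inviscid-boundedness-1} then gives
\begin{displaymath}
\tilde m_1\,\|\phi\|_{1,2}\leq |U''(x_\nu)|\,\|\phi\|_{1,2}\leq C_2\, N_{m,p}^\pm(v,\lambda),
\end{displaymath}
provided $|\mu|\leq \mu_0'\,|U''(x_\nu)|$ where $\mu_0'$ is from Proposition \ref{prop:inviscid-boundedness-1}; this holds as soon as $|\mu|\leq \mu_0'\,\tilde m_1$.

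To conclude, I would take $\mu_0=\min(\delta_1,\mu_0'\tilde m_1)$ and $C=\max(C_1,C_2/\tilde m_1)$, which yields \eqref{eq:28} for every $\nu\in\R$, every $\alpha\geq 0$, and every $0<|\mu|\leq\mu_0$. I do not foresee any serious obstacle: the only delicate point is confirming that the $\delta$ in Lemma \ref{lem:positivity} can be chosen uniformly on $\Dg$, and this is handled by the compactness of $\Dg$ together with the uniform lower bound $\sigma_0$ built into the hypothesis, via the standard finite-subcover argument sketched above.
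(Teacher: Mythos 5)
Your proposal is correct and follows essentially the same two-region decomposition as the paper's proof: apply Lemma \ref{lem:positivity} near $\Dg$ and Proposition \ref{prop:inviscid-boundedness-1} (via the lower bound $|U''(x_\nu)|\geq m_\delta$) away from $\Dg$. You helpfully make explicit the compactness/finite-subcover argument needed to extract a uniform $\delta$ and $C$ from the pointwise statement of Lemma \ref{lem:positivity}, a step the paper leaves implicit.
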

\begin{proof}
Note that in the set
\begin{displaymath}
  \D_\delta=\{\nu\in\R \,| \,d(\nu,\Dg)\geq\delta\,\} \,,
\end{displaymath}
it holds that $|U^{\prime\prime}(x_\nu)|\geq m_\delta>0$. Hence we can bound
$(\A_{\lambda,\alpha}^\D)^{-1}$ for $\nu\in\Dg_\delta$ using Proposition
\ref{prop:inviscid-boundedness-1}, whereas for $\nu\in[U(-1),U(1)]\setminus\Dg_\delta$
we can apply Lemma \ref{lem:positivity}. 
\end{proof}
From Proposition \ref{cor:strip-bounded-inverse} we may conclude that
\begin{corollary}
\label{cor:no-eigenvalues} 
  Under the conditions of Proposition \ref{cor:strip-bounded-inverse}
  there exists $\mu_0>0$ such  that for $\lambda =\mu+ i \nu$, $|\mu| \leq \mu_0$,  and
  $\alpha \geq 0$,  if $\phi\in H^2(-1,1)\cap H^1_0(-1,1)$ satisfies 
  $\A_{\lambda,\alpha}\phi=0$  and then $\phi\equiv0$. 
\end{corollary}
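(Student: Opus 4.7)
The plan is to split into the two cases $\mu\neq 0$ and $\mu=0$. The first case is immediate from Proposition \ref{cor:strip-bounded-inverse}: the inverse estimate \eqref{eq:28} applied to $v=0$ forces $\|\phi\|_{1,2}=0$, so $\phi\equiv 0$. All the substance lies in the boundary case $\mu=0$, i.e.\ $\lambda=i\nu$, which cannot be read off directly from \eqref{eq:28} because that estimate is stated only for $\mu\neq 0$.

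For $\mu=0$, I would first exploit the equation to gain regularity on the Rayleigh side. Since $\phi\in H^2\cap H^1_0$, both $(U-\nu)(-\phi''+\alpha^2\phi)$ and $U''\phi$ lie in $L^2$, so $\A_{i\nu,\alpha}\phi=0$ yields $(U-\nu)(-\phi''+\alpha^2\phi)+U''\phi=0$ almost everywhere, and hence $U''\phi/(U-\nu)\in L^2(-1,1)$ with $-\phi''+\alpha^2\phi=-U''\phi/(U-\nu)$ a.e. Note that this is automatic at $x_\nu$: either $U''(x_\nu)=0$ (i.e.\ $\nu\in\Dg$), or necessarily $\phi(x_\nu)=0$.

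Next I would approximate from within the invertibility region. Let $\lambda_n=\mu_n+i\nu$ with $\mu_n\downarrow 0$, $\mu_n\in(0,\mu_0]$. Using $\A_{\lambda_n,\alpha}-\A_{i\nu,\alpha}=i\mu_n(-d^2/dx^2+\alpha^2)$ and the Rayleigh identity just established,
\begin{displaymath}
v_n:=\A_{\lambda_n,\alpha}\phi=i\mu_n(-\phi''+\alpha^2\phi)=-i\mu_n\,\frac{U''\phi}{U-\nu}.
\end{displaymath}
Applying \eqref{eq:28} to the pair $(\phi,v_n)$ at $\lambda_n$ gives $\|\phi\|_{1,2}\leq C\,N_{m,p}^\pm(v_n,\lambda_n)$. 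Using the first term of the $\min$ defining $N_{m,p}^\pm$ and Cauchy-Schwarz,
\begin{displaymath}
\Big\|(1\pm\cdot)^{1/2}\frac{v_n}{U+i\lambda_n}\Big\|_1\leq \sqrt{2}\,\mu_n\,\Big\|\frac{U''\phi}{U-\nu}\Big\|_2\,\Big\|\frac{1}{U-\nu+i\mu_n}\Big\|_2,
\end{displaymath}
while the change of variables $y=U(x)-\nu$ together with \eqref{eq:5} produces the standard bound $\|1/(U-\nu+i\mu_n)\|_2\leq C\,\mu_n^{-1/2}$. Thus $N_{m,p}^\pm(v_n,\lambda_n)=O(\sqrt{\mu_n})\to 0$ as $n\to\infty$, and since the left-hand side $\|\phi\|_{1,2}$ does not depend on $n$, it must vanish.

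The one delicate point (the main obstacle, such as it is) is that $v_n$ is only in $L^2$, whereas Proposition \ref{cor:strip-bounded-inverse} is phrased for $v\in W^{1,p}$. This is not a serious issue because the proof of \eqref{eq:28} passes through the quantity $N_{m,p}^\pm$, which for our $v_n$ is controlled by its first entry alone; the corresponding argument (the $L^1$-weighted estimate leading to \eqref{eq:18}) requires only $v_n/(U+i\lambda_n)\in L^1((1\pm\cdot)^{1/2}dx)$, a property we have just verified. If one prefers to avoid revisiting the proof, an equivalent regularization of $\phi$ by smoother functions in $H^2\cap H^1_0$ and a passage to the limit achieves the same conclusion.
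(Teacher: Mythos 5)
Your proof is correct and follows essentially the same route as the paper: for $\mu\neq 0$, invoke \eqref{eq:28} directly; for $\mu=0$, approximate with $\lambda_n=\mu_n+i\nu$, note $\A_{\lambda_n,\alpha}\phi=i\mu_n(-\phi''+\alpha^2\phi)$, and bound the weighted $L^1$ quantity in $N_{m,p}^\pm$ by Cauchy--Schwarz against $\|1/(U-\nu+i\mu_n)\|_2\lesssim\mu_n^{-1/2}$ to obtain $\|\phi\|_{1,2}=O(\mu_n^{1/2})\to 0$. The only difference is cosmetic: you substitute $-\phi''+\alpha^2\phi=-U''\phi/(U-\nu)$ before estimating, whereas the paper bounds $\|-\phi''+\alpha^2\phi\|_2$ directly (these coincide by your own identity), and the $W^{1,p}$-regularity caveat you raise applies equally to the paper's argument, which also implicitly uses only the weighted $L^1$ branch of $N_{m,p}^\pm$.
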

\begin{proof}~\\

  The case $0<|\mu|\leq \mu_0$ follows immediately from Proposition
  \ref{cor:strip-bounded-inverse}.\\  Consider now the case $\mu=0$.
  Suppose that for some $\phi\in H^2(-1,1)\cap H^1_0(-1,1)$, $\nu\in\R$, and
  $\alpha\in\R_+$ it holds that $\A_{i\nu,\alpha}\phi=0$. Then, we may write, for some
  $0<\mu<\mu_0$,
  \begin{displaymath}
    \A_{\lambda,\alpha}\phi=-i\mu\, (\phi^{\prime\prime}-\alpha^2\phi) \,.
  \end{displaymath}
By \eqref{eq:28} we then have
\begin{displaymath}
  \|\phi\|_{1,2}\leq C\mu\, \Big\|\frac{\phi^{\prime\prime}-\alpha^2\phi}{(U-\nu)+i\mu}\Big\|_1 \leq
  C\, |\mu|^{1/2}\,\|\phi^{\prime\prime}-\alpha^2\phi\|_2 \,.
\end{displaymath}
Letting $\mu\to0$ yields $\phi\equiv0$. 
\end{proof}

The following rather standard observations  are also necessary in the
sequel 
\begin{lemma}
\label{lem:large-eigenvlaue}\strut
\begin{itemize}
\item 
There exist $\lambda_0>0$ and $C>0$ such that for any $|\lambda|\geq \lambda_0$,
$\A_{\lambda,\alpha}^\D$ is invertible and for any $v\in L^2(-1,1)$
  \begin{equation}
    \label{eq:33}
\|\big(A_{\lambda,\alpha}^D\big)^{-1}v\|_{1,2} \leq \frac{C}{|\lambda|}\|v\|_2 \,.
  \end{equation}
\item For any $p>1$, there exists $\alpha_0>0$ and $C>0$ such that for any
  $\lambda\in\C$, $\alpha \geq \alpha_0$, $\A_{\lambda,\alpha}^\D$ is invertible and \eqref{eq:28}
  holds true.
 \end{itemize}
\end{lemma}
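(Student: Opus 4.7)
The plan is to multiply $\A_{\lambda,\alpha}\phi=v$ by $(U+i\lambda)^{-1}$ to cast the problem in Helmholtz form
\[
\Bigl(-\frac{d^2}{dx^2}+\alpha^2\Bigr)\phi=\frac{v}{U+i\lambda}-\frac{U^{\prime\prime}}{U+i\lambda}\,\phi \,,
\]
test against $\phi$, and take real parts to obtain
\[
\|\phi^\prime\|_2^2+\alpha^2\|\phi\|_2^2=\Re\Big\langle\phi,\frac{v}{U+i\lambda}\Big\rangle-\Re\Big\langle\phi,\frac{U^{\prime\prime}}{U+i\lambda}\phi\Big\rangle \,.
\]
In either part of the lemma, once the a priori inequality is in hand it encodes injectivity; invertibility then follows from the vanishing of the Fredholm index of $\A_{\lambda,\alpha}^\D$, namely from Remark \ref{rem:fredholm} when $\mu\neq 0$, and from the fact that in the remaining regime (large $|\lambda|$ with $\mu=0$, which forces $\nu\notin[U(-1),U(1)]$) the coefficient $(U+i\lambda)^{-1}$ is smooth, so $\A_{\lambda,\alpha}^\D$ factors as a smooth multiplication times a Fredholm index-zero operator on $H^2\cap H_0^1$.

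For the first assertion, set $\lambda_0:=2\|U\|_\infty$, so that $|U+i\lambda|\geq|\lambda|/2$ whenever $|\lambda|\geq\lambda_0$. Cauchy--Schwarz then gives
\[
\|\phi^\prime\|_2^2+\alpha^2\|\phi\|_2^2\leq \frac{2}{|\lambda|}\|\phi\|_2\|v\|_2+\frac{2\|U^{\prime\prime}\|_\infty}{|\lambda|}\|\phi\|_2^2 \,,
\]
and enlarging $\lambda_0$ so that the second term is absorbed by $\|\phi^\prime\|_2^2$ via Poincaré yields \eqref{eq:33}.

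For the second assertion I would apply the same Helmholtz identity but now estimate the $v$-term using \eqref{eq:18} instead of Cauchy--Schwarz, and treat the $U^{\prime\prime}$-term in two regimes. For $|\lambda|\geq\lambda_0$, the pointwise bound $\|1/(U+i\lambda)\|_\infty\leq 2/|\lambda|$ immediately gives $|\langle\phi,U^{\prime\prime}\phi/(U+i\lambda)\rangle|\leq C|\lambda|^{-1}\|\phi\|_2^2$, which Poincaré absorbs. For $|\lambda|\leq\lambda_0$, write $U^{\prime\prime}/(U+i\lambda)=(U^{\prime\prime}/U^\prime)\,(\log(U+i\lambda))^\prime$ and integrate by parts exactly as in Step~1 of Proposition~\ref{prop:inviscid-boundedness-1} to obtain
\[
\Big|\Big\langle\phi,\frac{U^{\prime\prime}}{U+i\lambda}\phi\Big\rangle\Big|\leq C\bigl(\|\phi\|_\infty\|\phi^\prime\|_2+\|\phi\|_\infty^2\bigr) \,,
\]
with $C$ uniform in $\lambda$ thanks to the uniform-in-$\lambda$ bound on $\|\log|U+i\lambda|\|_q$ ensured by \eqref{eq:5}. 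Using Sobolev interpolation $\|\phi\|_\infty^2\leq\varepsilon\|\phi^\prime\|_2^2+C_\varepsilon\|\phi\|_2^2$ and Young's inequality, the identity then reduces to
\[
(1-C\varepsilon)\|\phi^\prime\|_2^2+(\alpha^2-C_\varepsilon)\|\phi\|_2^2\leq C\|\phi^\prime\|_2\,N_{m,p}^\pm(v,\lambda)\,.
\]
Choosing $\varepsilon$ small and then $\alpha_0$ large enough that $\alpha^2\geq 2C_\varepsilon$ closes the estimate and yields \eqref{eq:28}.

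The main obstacle is the uniform-in-$\lambda$ control of $\langle\phi,U^{\prime\prime}\phi/(U+i\lambda)\rangle$ in the second assertion when $\nu\in[U(-1),U(1)]$ and $\mu$ is small: a direct pointwise estimate would blow up as $\mu\to 0$, whereas the logarithmic integration-by-parts from the preceding proposition converts the singularity into a quadratic form in $\phi$ and $\phi^\prime$ with bounded constants, which is precisely what enables the absorption at large $\alpha$. Once this step is secured, the remainder of the argument is routine Cauchy--Schwarz and Young bookkeeping.
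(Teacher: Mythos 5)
Your argument is correct and follows essentially the same route as the paper: the first bullet is exactly the paper's identity $-\phi''+\alpha^2\phi+\tfrac{U''}{U+i\lambda}\phi=\tfrac{v}{U+i\lambda}$ tested against $\phi$ with Poincar\'e absorption, and the second bullet reproduces (in more explicit form) what the paper outsources to ``Step 3 of Proposition \ref{prop:inviscid-boundedness-1}'' (i.e.\ to \cite[Proposition 4.14]{AH1}), namely the same Helmholtz form together with \eqref{eq:18} for the source term and the logarithmic integration-by-parts bound for the $U''/(U+i\lambda)$ term, closed at large $\alpha$ by $\|\phi\|_\infty^2\leq\varepsilon\|\phi'\|_2^2+C_\varepsilon\|\phi\|_2^2$. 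The only point worth flagging is that, like the paper, you handle invertibility via a Fredholm-index-zero argument which is cleanest for $\mu\neq 0$ (Remark \ref{rem:fredholm}); for $\mu=0$ with $\nu\in[U(-1),U(1)]$ the operator acts on the weighted domain \eqref{eq:12} and the index statement would need a separate (routine but unstated) justification, but this is a gap shared with the paper's own terse proof rather than a defect of your proposal.
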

\begin{proof}
  The proof of \eqref{eq:33} follows immediately from the equation
\begin{displaymath}
  -\phi^{\prime\prime}+\alpha^2\phi + \frac{U^{\prime\prime}}{U+i\lambda}\phi=\frac{v}{U+i\lambda}\,. 
\end{displaymath}

From step 3 of the proof of Proposition
\ref{prop:inviscid-boundedness-1} we conclude that there exists
$\alpha_0>0$ such that \eqref{eq:28} holds true for all $\alpha \geq \alpha_0$ and
$|\mu|<1$. For $|\mu|\geq1$ we may conclude as in the proof of \eqref{eq:33}
that \eqref{eq:28} holds whenever $\alpha^2\geq 2 \|U^{\prime\prime}\|_\infty$.
\end{proof}

We can now prove the main result of this section. A similar result has
been proved in \cite{hietal14}, assuming in addition that $U$ is real analytic  on
$[-1,+1]$ and that $U^{(3)}(x_\nu)\neq 0$ for any $\nu \in \Dg$. (See their
assumptions (A1) and (A2) just above Theorem 3.1 .)
\begin{theorem}
  \label{thm:invicid} 
Let $\Rg_\alpha^D$ be defined as in \eqref{eq:10}  for $U\in C^3[-1,1]$
satisfying \eqref{eq:5}.
If
\begin{equation}
\label{eq:34}
  \inf_{\nu\in\Dg}\min  \sigma(\Kg_\nu^\D)>0 \,,
\end{equation}
then
\begin{displaymath}
  \sigma(\Rg_\alpha^D)= [U(-1),U(1)] 
\end{displaymath}
for all $\alpha\geq0$. Furthermore, there are no embedded eigenvalues of
$\Rg_\alpha^D$ in $[U(-1),U(1)]$.
\end{theorem}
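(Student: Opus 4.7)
The plan is to combine the known identification $\sigma_{ess}(\Rg_\alpha^D)=[U(-1),U(1)]$ from \cite{weetal18,st95} with the invertibility results for $\A_{\lambda,\alpha}^D$ derived earlier in this section. The key correspondence, recorded after \eqref{eq:10}, is that $\zeta\in\C$ is a discrete eigenvalue of $\Rg_\alpha^D$ with eigenvector $\phi\in H^2\cap H^1_0$ precisely when $\A_{\lambda,\alpha}^D\phi=0$ for the $\lambda$ related to $\zeta$ through the eigenvalue equation; in particular $|\Re\lambda|=|\Im\zeta|$.

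From this correspondence, Corollary \ref{cor:no-eigenvalues} immediately forbids embedded eigenvalues by taking $\Re\lambda=0$. More broadly, Corollary \ref{cor:no-eigenvalues} precludes every $\zeta$ with $|\Im\zeta|\leq\mu_0$ from being an eigenvalue, while the first part of Lemma \ref{lem:large-eigenvlaue} precludes every $\zeta$ with $|\zeta|\geq\lambda_0$. Consequently any hypothetical discrete eigenvalue of $\Rg_\alpha^D$ must lie in the compact set
\begin{displaymath}
\Omega\,:=\,\{\zeta\in\C\,:\,|\Im\zeta|>\mu_0,\ |\zeta|\leq\lambda_0\}\,,
\end{displaymath}
and by the analytic Fredholm theorem there are only finitely many such eigenvalues with finite algebraic multiplicity.

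To show that $\Omega$ is in fact eigenvalue-free, I would run a continuity-in-$\alpha$ argument. By the second part of Lemma \ref{lem:large-eigenvlaue}, $\sigma(\Rg_\alpha^D)\cap\Omega=\emptyset$ for all $\alpha\geq\alpha_0$. For general $\alpha\geq0$ consider the Riesz spectral projection
\begin{displaymath}
P(\alpha)\,:=\,-\frac{1}{2\pi i}\oint_{\partial\Omega}(\Rg_\alpha^D-\zeta)^{-1}\,d\zeta\,.
\end{displaymath}
Its definition is legitimate because $(\Rg_\alpha^D-\zeta)^{-1}$ is uniformly bounded in $\alpha$ along $\partial\Omega$: on the horizontal segments $|\Im\zeta|=\mu_0$ via Proposition \ref{cor:strip-bounded-inverse}, and on the circular arc $|\zeta|=\lambda_0$ via Lemma \ref{lem:large-eigenvlaue}. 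Hence $\alpha\mapsto P(\alpha)$ is continuous as a map into the bounded operators on $H^2\cap H^1_0$; its rank equals the integer total algebraic multiplicity of eigenvalues of $\Rg_\alpha^D$ in $\Omega$, and is therefore locally constant. Since that rank vanishes for $\alpha\geq\alpha_0$, it vanishes for every $\alpha\geq0$, so $\Omega$ is eigenvalue-free. Combined with the absence of embedded eigenvalues and the known essential spectrum, this yields $\sigma(\Rg_\alpha^D)=[U(-1),U(1)]$.

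The main obstacle I foresee is translating the $\A_{\lambda,\alpha}^D$-estimates of Proposition \ref{cor:strip-bounded-inverse} and Lemma \ref{lem:large-eigenvlaue}, formulated as $L^2\to H^1$ inverse bounds, into genuine $\alpha$-uniform resolvent bounds for $\Rg_\alpha^D$ acting on $H^2\cap H^1_0$ along $\partial\Omega$. Using the factorisation $(\Rg_\alpha^D-\zeta)^{-1}=(\A_{\lambda,\alpha}^D)^{-1}\bigl(-d^2/dx^2+\alpha^2\bigr)$ (with the appropriate sign in the $\lambda\leftrightarrow\zeta$ correspondence) and elliptic regularity for $-d^2/dx^2+\alpha^2$, this is in principle routine, but making the bounds uniform in $\alpha$ down to $\alpha=0$ is where the bulk of the bookkeeping lies.
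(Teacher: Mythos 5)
Your proof is correct and it takes a genuinely different route from the paper's. The paper argues by contradiction via a supremum-and-compactness scheme: it sets $A=\sup\{\alpha : \sigma(\Rg_\alpha^D)\ \text{has an eigenvalue}\}$, extracts a weak limit of normalized kernel elements to show the supremum is attained with an honest eigenpair at $\alpha=A$, and then invokes Kato's analytic perturbation theory (the eigenvalues of the $m\times m$ reduction $M_\alpha$ depend continuously on $\alpha$) to push an eigenvalue to $\alpha>A$, contradicting the definition of $A$. Your approach replaces all of this with a single Riesz spectral projection $P(\alpha)=-\frac{1}{2\pi i}\oint_{\partial\Omega}(\Rg_\alpha^D-\zeta)^{-1}\,d\zeta$ around the compact region $\Omega$ where eigenvalues are not already ruled out by Corollary \ref{cor:no-eigenvalues} and Lemma \ref{lem:large-eigenvlaue}, and a rank-continuity argument connecting $\alpha\geq\alpha_0$ (where the second part of Lemma \ref{lem:large-eigenvlaue} gives emptiness) down to $\alpha=0$.

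Both arguments are sound, and your route is arguably cleaner conceptually: one contour integral replaces the separate compactness extraction and the appeal to Kato. The main ingredient you need, beyond what the paper states explicitly, is the $\alpha$-uniform resolvent bound for $\Rg_\alpha^D$ on $\partial\Omega$, translated from the $L^2\to H^1_0$ bounds on $(\A_{\lambda,\alpha}^D)^{-1}$ via $(\Rg_\alpha^D-\zeta)^{-1}=(\A_{\lambda,\alpha}^D)^{-1}(-d^2/dx^2+\alpha^2)$ and elliptic regularity. You are right to flag this as the place where bookkeeping lives, but it is genuinely routine here: on all of $\partial\Omega$ one has $|\Im\zeta|=|\Re\lambda|\geq\mu_0$, so $U+i\lambda$ is uniformly bounded away from zero, and elliptic regularity upgrades the $H^1$ bound to $H^2$ with constants depending only on $\alpha_0$ and $\mu_0$. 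You should still spell out that $\sigma_{ess}=[U(-1),U(1)]$ is disjoint from $\overline{\Omega}$ (since $\sigma_{ess}\subset\R$ while $|\Im\zeta|\geq\mu_0$ on $\overline{\Omega}$), which is what guarantees that only finitely many isolated finite-multiplicity eigenvalues can sit in $\Omega$ and that $P(\alpha)$ is finite rank. Continuity of $P(\alpha)$ follows from the resolvent identity together with the norm-continuity in $\alpha$ of $\Rg_\alpha^D-\Rg_{\alpha'}^D=(\alpha'^2-\alpha^2)(-d^2/dx^2+\alpha^2)^{-1}(-d^2/dx^2+\alpha'^2)^{-1}U''$, but it is worth stating this since the uniform resolvent bound alone is not literally the same as continuity. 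With these small clarifications your argument stands on its own.
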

\begin{proof}
Let $\Omega =\{(\mu,\nu,\alpha) \mbox{ s.t.  } \mu \geq 0 \mbox{ and }
  (\nu -i\mu)\in \sigma(\Rg_\alpha^D)$. By the foregoing analysis (in particular Corollary \ref{cor:no-eigenvalues}), we know under
  the assumption of the theorem that 
\begin{displaymath}
\Omega \subset[\mu_0,\lambda_0] \times  [-\lambda_0,\lambda_0] \times[0,\alpha_0]  \,.
\end{displaymath} 
Suppose that $\Omega$ is not empty and then introduce
\begin{equation}
\label{eq:35}
A :=\sup \{\alpha, (\lambda,\alpha)\in \Omega\}\,.
\end{equation}
Notice that $A <+\infty$. By definition, there exists a sequence $(\alpha_k,\lambda_k)
\in \Omega$ and a sequence $\{\phi_k\}_{k=1}^\infty\subset H^2(-1,1)\cap H^1_0(-1,1)$ such that
$\|\phi_k\|_2=1$, $\A_{\lambda_k,\alpha_k}\phi_k=0$, and $ \lim_{k\to\infty} \alpha_k =A$. Using the
relative compactness of $\Omega$ in $ [0,\alpha_0] \times [\mu_0,\lambda_0] \times [-\lambda_0,\lambda_0]$ we
can assume, after extraction of a subsequence, that $\lambda_k$ is
convergent to some $\lambda_{\infty}$ and that $\phi_k$ is strongly convergent to
some $\phi_{\infty}$ in $H^1$ (hence in $H_0^1$) which satisfies
$\A_{\lambda_\infty,\alpha_{\infty} } \phi_\infty =0$ in $\mathcal D'(-1,+1)$.  Hence $\phi_\infty$ belongs
to $H_0^1\cap H^2$ and $(\lambda_\infty,A) \in \Omega$.

Finally, we show the contradiction with the definition of $A$.  Since (see \cite{st95,weetal18})
$$\sigma_{ess}(\Rg_\alpha^D)=[U(-1),U(1)]\,,$$  $i \lambda_\infty$ is an
isolated eigenvalue of finite multiplicity $m$ of $\Rg_A^\D$ at $\alpha=A$.
Since $\Rg_\alpha^D$ depends holomorphically on $\alpha$, by \cite[Theorem
1.7]{ka80} we may obtain all its eigenvalues, satisfying $i\lambda(A)=i\lambda_\infty$
through the spectral analysis of a $C^0$- family on $(A-\delta,A+ \delta)$ of
$m\times m$ matrices $M_\alpha$ (see also \cite{dusc58}). The eigenvalues appear
as the roots of the polynomial $\lambda \mapsto {\rm det} (M_\alpha - \lambda)$ whose
coefficients are continuous with respect to $\alpha$.  By continuity of the
roots, we find for $\alpha=A+\delta/2$ an eigenvalue of $\Rg_{A+\frac \delta 2}$
close to $\lambda_\infty$, contradicting, therefore, the definition of $A$ in
\eqref{eq:35}.
\end{proof}

\section{One-dimensional Schr\"odinger estimates}
\label{sec:3}
 In this section we obtain two improved resolvent estimates,
  with respect to \cite{AH1}, for the Schr\"odinger operator
  $-d^2/dx^2+i\beta U$.

\begin{remark}
\label{rem:AH2}
We shall use in the sequel results from \cite{AH2}. It should be noted
that the laminar velocity profile $U$ there is not strictly monotone,
as in the present contribution, and a single extremal point is assumed
at $x=0$.  Nevertheless, this extremal point has a significant effect
on the resolvent of the Schr\"odinger operator $-d^2/dx^2+i\beta U$ only
when $|U(0)-\nu|\ll1$. Hence, in the sequel, we use variants of the
estimates in \cite{AH2}.
\end{remark}

Let $\{\nu_k\}_{k=1}^\infty$ denote the zeroes of Airy's function
$\Ai:\R\to\R$. Let further $\LL_\beta^D:H^2(-1,1)\cap H^1_0(-1,1)\to L^2(-1,1)$ be
given by
\begin{equation}
\label{eq:36}
  \LL_\beta^D=-\frac{d^2}{dx^2}+i\beta U \,.
\end{equation}
We begin with the following
simple extension of \cite[Proposition 5.2]{AH1}
\begin{lemma}
\label{lem:standard}
 Let $U\in C^2[-1,1]$ satisfy \eqref{eq:5}.
  Let  further $\Upsilon<|\nu_1|/2$. 
 Then,   there exist positive $\beta_0$ and $C$ such that for all $\lambda =\mu +i
 \nu$ such that $\beta>\beta_0$ and $\mu\beta^{1/3}<\Upsilon$, $\LL_\beta^D-\beta\lambda$ is invertible and
 satisfies 
  \begin{equation}
    \label{eq:37}
\|(\LL_\beta^D-\beta\lambda)^{-1}\|+r_\beta^{1/2}\Big\|\frac{d}{dx}(\LL_\beta^D-\beta\lambda)^{-1}\Big\|\leq C\, r_\beta \,,
  \end{equation}
  with 
   \begin{displaymath}
   r_\beta=\min(\Upsilon|\mu\beta|^{-1},\beta^{-2/3})\,.
  \end{displaymath}
\end{lemma}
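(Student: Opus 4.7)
The plan is to follow the argument of \cite[Proposition 5.2]{AH1}, with minor adjustments accounting for the fact that $U$ here is strictly monotone rather than possessing an internal extremal point. The parameter range $\mu\beta^{1/3} < \Upsilon$ splits naturally into a coercive regime and a critical regime.

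In the coercive regime $\mu \leq -c\beta^{-2/3}$ (so that $r_\beta = \Upsilon|\mu\beta|^{-1}$), the estimate is elementary: writing $\phi = (\LL_\beta^D - \beta\lambda)^{-1} f$, so that $-\phi'' - \beta\mu\phi + i\beta(U-\nu)\phi = f$, the real part of the pairing with $\phi$ gives
\begin{displaymath}
\|\phi'\|_2^2 + \beta|\mu|\,\|\phi\|_2^2 = \Re\langle f, \phi\rangle,
\end{displaymath}
and Cauchy--Schwarz immediately produces $\|\phi\|_2 \leq \|f\|_2/(\beta|\mu|)$ and $\|\phi'\|_2 \leq \|f\|_2/(\beta|\mu|)^{1/2}$, which matches \eqref{eq:37}.

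In the critical regime $-c\beta^{-2/3} < \mu < \Upsilon\beta^{-1/3}$, where $r_\beta = \beta^{-2/3}$, I would reproduce the complex Airy comparison from \cite{AH1}. For $\nu \in [U(-1),U(1)]$, set $x_\nu = U^{-1}(\nu)$ and Taylor expand $U(x) - \nu = U'(x_\nu)(x - x_\nu) + O((x - x_\nu)^2)$. Under the rescaling $y = [\beta U'(x_\nu)]^{1/3}(x - x_\nu)$, the principal part of $\LL_\beta - \beta\lambda$ becomes $[\beta U'(x_\nu)]^{2/3}\bigl(-d^2/dy^2 + iy - z\bigr)$ with $z = \mu\beta^{1/3}/U'(x_\nu)^{2/3}$. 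The hypothesis $\mu\beta^{1/3} < \Upsilon < |\nu_1|/2$, combined with $U' \geq \mathbf{m}$, keeps $z$ safely within the resolvent set of the complex Airy operator, whose half-line Dirichlet eigenvalues lie on the ray $\{e^{i\pi/3}|\nu_k|\}$; standard resolvent bounds for this model, transferred back through the scaling, yield $\|\phi\|_2 \leq C\beta^{-2/3}\|f\|_2$ together with $\|\phi'\|_2 \leq C\beta^{-1/3}\|f\|_2$. A partition of unity then glues this local estimate to the complementary region, where $|U - \nu|$ is bounded below and the imaginary part of $\langle(\LL_\beta - \beta\lambda)\phi,\phi\rangle$ alone provides an even sharper coercive bound. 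The case $\nu \notin [U(-1),U(1)]$ is strictly easier and is handled entirely by the latter imaginary-part argument.

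The main technical obstacle is controlling the quadratic remainder from the Taylor expansion, together with the commutators of the localization cutoffs with $d^2/dx^2$; both contribute perturbations of order $\beta^{1/3}$ in the Airy-scaled variable, which must be absorbed by the leading spectral gap of order $\beta^{2/3}$ upon choosing the cutoff width of order $\beta^{-1/3+\eta}$ for some small $\eta > 0$. As observed in Remark \ref{rem:AH2}, the absence of an interior critical point of $U$ renders the finer machinery of \cite{AH2} unnecessary here, and the proof ultimately amounts to a careful tracking of constants in the argument of \cite{AH1} to secure uniformity across $\nu \in \R$.
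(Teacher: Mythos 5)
Your high-level plan (a coercivity estimate for negative $\mu$ plus a complex-Airy comparison for the critical range) is the same circle of ideas as the paper, but the paper is far more economical: it invokes \cite[Proposition 5.2]{AH1} \emph{verbatim} for the entire regime $\mu \geq -\Upsilon\beta^{-1/3}$ and only supplies the elementary integration-by-parts / coercivity argument for the new case $\mu < -\Upsilon\beta^{-1/3}$. You instead undertake a re-derivation of the Airy comparison, which in the end reproduces the cited proposition; this costs a lot of work and introduces the technical issues you flag (Taylor remainder, cutoff commutators, the dependence on $U'(x_\nu)$ of the rescaled spectral parameter $z$) without any gain.

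More substantively, your regime split is misplaced and leaves a genuine gap. Since
\begin{displaymath}
r_\beta = \min\bigl(\Upsilon|\mu\beta|^{-1},\beta^{-2/3}\bigr),
\end{displaymath}
the crossover occurs at $|\mu|=\Upsilon\beta^{-1/3}$, not at $|\mu|\sim\beta^{-2/3}$: one has $r_\beta=\Upsilon|\mu\beta|^{-1}$ only when $|\mu|\geq\Upsilon\beta^{-1/3}$, and $r_\beta=\beta^{-2/3}$ when $|\mu|\leq\Upsilon\beta^{-1/3}$. Your parenthetical ``(so that $r_\beta=\Upsilon|\mu\beta|^{-1}$)'' for $\mu\leq -c\beta^{-2/3}$ is therefore incorrect, and the coercivity estimate $\|v\|_2\leq|\mu\beta|^{-1}\|f\|_2$ fails to deliver \eqref{eq:37} throughout the intermediate band $-\Upsilon\beta^{-1/3}<\mu\leq -c\beta^{-2/3}$: there one has $r_\beta=\beta^{-2/3}$ yet $|\mu\beta|^{-1}$ can be as large as $c^{-1}\beta^{-1/3}\gg\beta^{-2/3}$, so the coercivity bound exceeds $Cr_\beta$ by a factor that blows up as $\beta\to\infty$. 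Your Airy argument excludes this band by design, so as written no estimate is supplied there. The fix is simply to take the threshold at $\mu=-\Upsilon\beta^{-1/3}$: use the coercivity argument for $\mu<-\Upsilon\beta^{-1/3}$ (where $|\mu\beta|^{-1}=\Upsilon^{-1}r_\beta$), and rely on the Airy comparison --- or, more efficiently, simply cite \cite[Proposition 5.2]{AH1} --- for $-\Upsilon\beta^{-1/3}\leq\mu<\Upsilon\beta^{-1/3}$, which is exactly what the paper does.
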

\begin{proof}
  We prove only the case $\mu<-\Upsilon\beta^{-1/3}$, otherwise we can use
  \cite[Proposition 5.2]{AH1}. Let $v\in H^2(-1,1)\cap
  H^1_0(-1,1)$ and $g\in L^2(-1,1)$ satisfy
  \begin{displaymath}
    (\LL_\beta^D-\beta\lambda)v=g\,.
  \end{displaymath}
An integration by parts yields
\begin{displaymath}
  \|v^\prime\|_2^2+|\mu|\beta\|v\|_2^2 =\Re\langle v,g\rangle\,,
\end{displaymath}
from which we easily conclude that
\begin{displaymath}
  \|v\|_2 \leq \frac{1}{|\mu|\beta}\|g\|_2\,.
\end{displaymath}
In addition, we may write
\begin{displaymath}
  \|v^\prime\|_2^2 \leq \|v\|_2\,\|g\|_2\leq\frac{1}{|\mu\beta|} \|g\|_2^2\,,
\end{displaymath}
which completes the proof of the lemma.
\end{proof}

For the convenience of the reader we adapt to the present context two
results from \cite{AH2} (see Remark \ref{rem:AH2}) which we frequently
use in the sequel.  We begin with \cite[Proposition 3.6]{AH2}
\begin{proposition}
  \label{Dirichlet-L1-H1-0} 
  Let $U\in C^2([0,1])$ satisfy \eqref{eq:5} and $\lambda_0>0$.
  Then there exist $\Upsilon>0$, $C>0$, and $\beta_0>0$ such that, for $\beta\geq
  \beta_0$, $|\nu|\leq \lambda_0$, $ \mu \leq\Upsilon\beta^{-1/3}$, and $f\in H^1(0,1)$ we have
\begin{equation}
\label{eq:38}
  \Big\| (\LL_\beta^D-\beta\lambda)^{-1} f+i\frac{f(x_\nu)}{\beta[U-\nu- i \max(-\mu,\beta^{-1/3})]} \Big\|_1
  \leq C\, \beta^{-1}\|f\|_{1,2}\,.
\end{equation}
\end{proposition}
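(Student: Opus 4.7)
The natural approximation is $v_{\mathrm{app}}(x) := -i f(x_\nu)/[\beta(U(x) - \nu - i\sigma)]$ with $\sigma := \max(-\mu, \beta^{-1/3})$, obtained by formally inverting the ``principal'' part of $\LL_\beta - \beta\lambda$---namely the multiplication operator $i\beta U - \beta\lambda$---after regularizing its imaginary part on the Airy scale $\beta^{-1/3}$ and freezing $f$ at its value $f(x_\nu)$ at the critical point. The plan is to verify that $v_{\mathrm{app}}$ is close to $-R_\beta f$, where $R_\beta := (\LL_\beta^D - \beta\lambda)^{-1}$, through a direct residual computation combined with an Airy-type resolvent bound imported from \cite{AH2}.

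I would first compute $(\LL_\beta - \beta\lambda) v_{\mathrm{app}}$ by direct differentiation. Using $\lambda = \mu + i\nu$ and setting $h := U - \nu - i\sigma$, one finds $(\LL_\beta - \beta\lambda) v_{\mathrm{app}} = f(x_\nu) - v_{\mathrm{app}}'' + i(\mu+\sigma) f(x_\nu)/h$. Crucially, $\mu+\sigma = 0$ when $\sigma = -\mu \geq \beta^{-1/3}$, and $|\mu+\sigma| \leq (\Upsilon+1)\beta^{-1/3}$ otherwise. Consequently the residual
\begin{displaymath}
r := f - (\LL_\beta - \beta\lambda) v_{\mathrm{app}} = (f - f(x_\nu)) + v_{\mathrm{app}}'' - \frac{i(\mu+\sigma) f(x_\nu)}{U - \nu - i\sigma}
\end{displaymath}
is built of three pieces, each small in an appropriate sense: $f - f(x_\nu) = O(\|f\|_{1,2} |x - x_\nu|^{1/2})$ by one-dimensional Sobolev embedding; $v_{\mathrm{app}}''$ concentrates near $x_\nu$ on the Airy scale and is controlled via explicit integrals of $|h|^{-p}$; and the last term carries an explicit $\beta^{-1/3}$ prefactor. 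Since $v_{\mathrm{app}}$ does not satisfy Dirichlet conditions, I would add a boundary correction $b$ supported on an Airy-scale neighborhood of the endpoints and matching the boundary values of $v_{\mathrm{app}}$; as $|v_{\mathrm{app}}(0)|,|v_{\mathrm{app}}(1)| \leq C\beta^{-1}\|f\|_{1,2}$, the $L^1$ cost of $b$ is of order $\beta^{-4/3}\|f\|_{1,2}$. Inverting then gives $R_\beta f + (v_{\mathrm{app}} - b) = R_\beta(-r + (\LL_\beta - \beta\lambda) b)$, from which the desired bound on $\|R_\beta f + v_{\mathrm{app}}\|_1$ follows.

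The main obstacle will be controlling the contribution of $v_{\mathrm{app}}''$ to $R_\beta r$. A crude $L^2 \to L^2$ application of Lemma \ref{lem:standard}, combined with the direct estimate $\|v_{\mathrm{app}}''\|_2 \sim \beta^{-1/6}\|f\|_{1,2}$, gives only $\beta^{-5/6}\|f\|_{1,2}$, losing a factor of $\beta^{1/6}$ compared with the target $\beta^{-1}\|f\|_{1,2}$. To close the gap I would invoke the sharper $L^1\to L^1$ (or weighted) Airy-type resolvent bounds established in \cite{AH2}, valid in the regime of Remark \ref{rem:AH2} since here $|U(x)-\nu|$ remains $\gtrsim \beta^{-1/3}$ uniformly in $x$ away from the critical point. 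An alternative is to integrate by parts and rewrite $R_\beta v_{\mathrm{app}}'' = \partial_x(R_\beta v_{\mathrm{app}}') - [\partial_x, R_\beta] v_{\mathrm{app}}'$, exploiting the first-derivative bound $\|\partial_x R_\beta\| \lesssim \beta^{-1/3}$ of Lemma \ref{lem:standard} together with a regularity estimate on $v_{\mathrm{app}}'$ that trades one factor of the Airy-scale singularity for a factor of $\beta^{-1/3}$.
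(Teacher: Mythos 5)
The paper does not prove this proposition; it is imported verbatim as \cite[Proposition 3.6]{AH2} after the adaptation noted in Remark~\ref{rem:AH2}, so there is no in-paper proof to compare against. Your parametrix strategy -- inverting the multiplication part, regularizing the imaginary part on the Airy scale $\beta^{-1/3}$, freezing $f$ at $x_\nu$, and then controlling the residual with resolvent bounds -- is the natural one and quite possibly the structure of the argument in \cite{AH2}, but as written it has two substantive gaps.

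First, you identify $v_{\mathrm{app}}''$ as ``the main obstacle'', but your own estimates show it is actually fine: with $\|v_{\mathrm{app}}''\|_2\lesssim\beta^{-1/6}|f(x_\nu)|$ and the $L^2\!\to\!L^1$ resolvent bound of order $\beta^{-5/6}$ (this is exactly what \cite[Lemma 5.7]{AH1} supplies and what the present paper uses repeatedly, e.g.\ just after \eqref{eq:77}), you immediately get $\|R_\beta v_{\mathrm{app}}''\|_1\lesssim\beta^{-1}|f(x_\nu)|$. So your ``Fix 1'' does close that term, and the crude $L^2\!\to\!L^2$ step followed by $\|\cdot\|_1\lesssim\|\cdot\|_2$ was simply the wrong chain of inequalities. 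Your ``Fix 2'', however, should be dropped: the commutator $[\partial_x,\LL_\beta-\beta\lambda]=i\beta U'$ reintroduces a factor of $\beta$, and even where that can be absorbed, the term $\partial_x R_\beta v_{\mathrm{app}}'$ only yields $\beta^{-5/6}|f(x_\nu)|$ via the available bounds on $\partial_x R_\beta$, so no $\beta^{1/6}$ is gained.

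Second -- and this is the real gap -- the term $f-f(x_\nu)$ is not disposed of by the phrase ``by one-dimensional Sobolev embedding''. Its $L^2$ norm is merely $O(\|f\|_{1,2})$ with no $\beta$-gain, so the $L^2\!\to\!L^1$ bound only gives $\beta^{-5/6}\|f\|_{1,2}$, a $\beta^{1/6}$ short of the target. The natural way to exploit the vanishing at $x_\nu$ is the weighted estimate of Proposition~\ref{Prop:3.8AH2}, writing $f-f(x_\nu)=(U-\nu)\,g$ with $g=(f-f(x_\nu))/(U-\nu)$; but for generic $f\in H^1$ one only has $|g(x)|\lesssim\|f'\|_2\,|x-x_\nu|^{-1/2}$, which is \emph{not} in $L^2$ (the $\|g\|_2^2$ integral diverges logarithmically), so that proposition cannot be applied as stated. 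You need either a refined weighted resolvent estimate taking $L^p$ data with $p<2$, or a direct kernel argument near $x_\nu$ trading the vanishing of $f-f(x_\nu)$ against the concentration of the resolvent kernel on the $\beta^{-1/3}$ scale. This is precisely the kind of estimate developed in \cite{AH2}, and without specifying which one you would invoke, the argument does not close.

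A minor bookkeeping point: with your definition $v_{\mathrm{app}}=-if(x_\nu)/[\beta h]$, the quantity to be bounded is $\|R_\beta f-v_{\mathrm{app}}\|_1$, and the identity after inversion should read $R_\beta f-(v_{\mathrm{app}}-b)=R_\beta\big(r+(\LL_\beta-\beta\lambda)b\big)$; the signs in your displayed identity are off. Also, for $\nu$ within $O(\beta^{-1/3})$ of $U(0)$ or $U(1)$ one has $|v_{\mathrm{app}}|$ at the boundary of size $\beta^{-2/3}|f(x_\nu)|$ rather than $\beta^{-1}|f(x_\nu)|$, so the $L^1$ cost of the boundary corrector is $\beta^{-1}$ at worst, not $\beta^{-4/3}$ -- still admissible, but worth stating carefully.
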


Next, we bring the corresponding adapted version of   \cite[Proposition
3.8]{AH2}
\begin{proposition} 
\label{Prop:3.8AH2}
Let $U\in C^3([0,1])$ satisfy \eqref{eq:5}, $\lambda_0>0$ and
$\Upsilon<\nu_1$. Then there exist $C>0$, $\beta_0>0$ such that,
for all $\beta \geq \beta_0$,
      \begin{multline} \label{eq:39}
\sup_{
  \begin{subarray}{c}
   \mu  \leq\Upsilon\beta^{-1/3} \\
|\nu| \leq \lambda_0
  \end{subarray}} \Big(
\|(\LL_\beta^D-\beta\lambda)^{-1}(U-\nu) f\|_2  +
\beta^{-1/2}\Big\| \frac{d}{dx} (\LL_\beta^D-\beta\lambda)^{-1} (U-\nu) f \Big\|_2 \Big) \\ \leq   C \beta^{-1}\|f\|_2\,. 
  \end{multline}
\end{proposition}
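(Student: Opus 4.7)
The strategy is to reduce the $L^2$ part of \eqref{eq:39} to a uniform $H^2$-regularity bound for the resolvent $(\LL_\beta^\D-\beta\lambda)^{-1}$ via a duality argument, and then to extract the $H^1$ gradient estimate from a standard energy identity. Observe first that, by conjugation $u\leftrightarrow\bar u$, the adjoint resolvent $((\LL_\beta^\D)^*-\beta\bar\lambda)^{-1}$ is equivalent to $(\LL_\beta^\D-\beta\lambda)^{-1}$ under the replacement $\lambda\mapsto\bar\lambda$, which lies in the same parameter range. Therefore, by duality,
\begin{displaymath}
\|(\LL_\beta^\D-\beta\lambda)^{-1}(U-\nu)\|_{L^2\to L^2} = \|(U-\nu)(\LL_\beta^\D-\beta\lambda)^{-1}\|_{L^2\to L^2}.
\end{displaymath}
Thus it suffices to prove $\|(U-\nu) u\|_2 \leq C\beta^{-1}\|g\|_2$ for $u=(\LL_\beta^\D-\beta\lambda)^{-1} g$. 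Rewriting the resolvent equation $-u''+i\beta(U-\nu)u-\beta\mu u = g$ in the form
\begin{displaymath}
i\beta(U-\nu) u = g+u''+\beta\mu u,
\end{displaymath}
the bound reduces to $\|u''\|_2+\beta|\mu|\|u\|_2 \leq C\|g\|_2$. The second summand is immediate from Lemma \ref{lem:standard}: $\|u\|_2\leq Cr_\beta\|g\|_2$ and a case analysis on $r_\beta=\min(\Upsilon|\mu\beta|^{-1},\beta^{-2/3})$ gives $\beta|\mu| r_\beta \leq \Upsilon$, whence $\beta|\mu|\|u\|_2 \leq C\|g\|_2$.

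\textbf{The $H^2$ estimate.} The principal technical step is the uniform regularity $\|u''\|_2 \leq C\|g\|_2$. Multiply the resolvent equation by $-\bar u''$, integrate, and integrate by parts using $u(\pm 1)=0$. The key cancellation arises from
\begin{displaymath}
\int(U-\nu)u\,\bar u''\,dx = -\int U' u\,\bar u'\,dx-\int(U-\nu)|u'|^2\,dx,
\end{displaymath}
so that $i\beta \int(U-\nu)|u'|^2\,dx$ is purely imaginary and drops out of the real part. This yields
\begin{displaymath}
\|u''\|_2^2 = \beta\,\Im\!\int U' u\,\bar u'\,dx + \beta\mu\|u'\|_2^2 - \Re\!\int g\,\bar u''\,dx.
\end{displaymath}
Using Lemma \ref{lem:standard} ($\|u'\|_2\leq Cr_\beta^{1/2}\|g\|_2$), one verifies $\beta\|u\|_2\|u'\|_2 \leq C\beta r_\beta^{3/2}\|g\|_2^2 \leq C\|g\|_2^2$ (since $\beta r_\beta^{3/2}\leq 1$) and $\beta|\mu|\|u'\|_2^2\leq C\beta|\mu|r_\beta\|g\|_2^2\leq C\|g\|_2^2$. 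The Cauchy--Schwarz bound $|\Re\int g\bar u''|\leq\tfrac{1}{2}\|u''\|_2^2+\tfrac{1}{2}\|g\|_2^2$ absorbs the quadratic term into the left-hand side, giving $\|u''\|_2\leq C\|g\|_2$ and hence the desired $L^2$ estimate.

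\textbf{The $H^1$ estimate and main obstacle.} With the $L^2$ bound $\|v\|_2\leq C\beta^{-1}\|f\|_2$ already established for $v=(\LL_\beta^\D-\beta\lambda)^{-1}(U-\nu)f$, multiply its equation by $\bar v$ and integrate. Taking the real part gives
\begin{displaymath}
\|v'\|_2^2-\beta\mu\|v\|_2^2 = \Re\!\int(U-\nu) f\,\bar v\,dx \leq \|(U-\nu) f\|_2\|v\|_2 \leq C\beta^{-1}\|f\|_2^2.
\end{displaymath}
For $\mu\leq 0$ the term $-\beta\mu\|v\|_2^2$ is nonnegative and can be dropped; for $0<\mu\leq\Upsilon\beta^{-1/3}$, $\beta\mu\|v\|_2^2 \leq \Upsilon C^2\beta^{-4/3}\|f\|_2^2$ is of lower order. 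In either case $\|v'\|_2\leq C\beta^{-1/2}\|f\|_2$, completing \eqref{eq:39}. The main obstacle is the uniform $H^2$ bound used in the second paragraph: a naive estimate from $u''=i\beta(U-\nu) u+\cdots$ would carry a fatal factor of $\beta$; the cancellation arising because $(U-\nu)|u'|^2$ is real, so that $i\beta\int(U-\nu)|u'|^2\,dx$ vanishes when taking real parts, is exactly what rescues the estimate and yields the gain of $\beta^{-1}$ (rather than the suboptimal $r_\beta\lesssim\beta^{-2/3}$) present in Lemma \ref{lem:standard}.
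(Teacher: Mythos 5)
Your proof is correct and self-contained, and since the paper itself does not prove this proposition but merely cites the parallel \cite[Proposition 3.8]{AH2}, your argument is a genuine alternative route worth recording. Your key observations all check out: (i) since $(U-\nu)$ is real and the adjoint $(\LL_\beta^\D)^*-\beta\bar\lambda$ is conjugate (via $u\leftrightarrow\bar u$) to $\LL_\beta^\D-\beta\lambda$, duality reduces the $L^2$ estimate to bounding $\|(U-\nu)(\LL_\beta^\D-\beta\lambda)^{-1}\|$; (ii) the uniform $H^2$ bound $\|u''\|_2\leq C\|g\|_2$ (which a naive use of $u''=i\beta(U-\nu)u+\cdots$ together with $\|u\|_2\lesssim r_\beta\|g\|_2$ would only give with a loss of $\beta^{1/3}$) is rescued precisely because $i\beta\int(U-\nu)|u'|^2$ is purely imaginary and drops out of the real-part identity obtained by testing against $\bar u''$; (iii) the remaining terms are absorbed using $\beta r_\beta^{3/2}\leq1$, $\beta|\mu|r_\beta\leq\Upsilon$, and Cauchy--Schwarz, all of which follow directly from Lemma \ref{lem:standard}; (iv) the $H^1$ estimate then falls out of the standard energy identity together with the $L^2$ bound already established. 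The only minor point to flag is the mismatch between the constant $\Upsilon$ allowed in Lemma \ref{lem:standard} ($\Upsilon<|\nu_1|/2$) and the one appearing in the statement of Proposition \ref{Prop:3.8AH2} ($\Upsilon<\nu_1$, which as written is a typo since $\nu_1<0$; presumably $|\nu_1|$ is intended). Your argument as given covers the range of Lemma \ref{lem:standard}; to handle the larger $\Upsilon$-range of the proposition one would need the corresponding extension of the underlying resolvent bound, but this is a cosmetic issue about matching constants rather than a gap in the reasoning. What your elementary approach buys is a fully transparent proof of the crucial gain of $\beta^{-1}$ (improving on the generic $\beta^{-2/3}$ of Lemma \ref{lem:standard}) without invoking the Airy-function machinery of \cite{AH2}.
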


Let further $\LL_\beta^\zeta$ be the differential
 operator $-d^2/dx^2 + i \beta U$ with domain
  \begin{equation}
\label{eq:40}
    D(\LL_\beta^\zeta)= \{ u\in H^2(-1,1)\,| \, \langle\zeta_\pm ,u\rangle=0\,\} \,.
  \end{equation}
 where $(\zeta_{-},\zeta_+)\in[H^1(-1,1)]^2$ are linearly independent but may
 depend on $\beta$. For convenience we require that $\zeta_\pm$ satisfy 
  \begin{equation}
\label{eq:41}   
    \begin{bmatrix}
      \zeta_+(1) & \zeta_-(1) \\
      \zeta_+(-1) & \zeta_-(-1)
    \end{bmatrix}
=
\begin{bmatrix}
  1 & 0 \\
  0 & 1
\end{bmatrix}
\,.
  \end{equation}

  Let (see  \cite[Eq. (6.8]{AH1} though the normalization there is
  different and \cite[Eq. (8.86)]{AH1})
    \begin{subequations}
\label{eq:42}
    \begin{equation} 
\hat{\psi}_-(x)= \frac{{\rm Ai}\big((J_- \beta)^{1/3}e^{
    i\pi/6}\big[(1+x)+iJ_-^{-1}\lambda_-\big]\big)}
{{\rm Ai}\big(J_-^{-2/3} \beta^{1/3}e^{ i2\pi/3}\lambda_-\big)}\Theta_-\,,
\end{equation}
and 
\begin{equation}
\overline{\hat{\psi}_+ (x)}=\frac{{\rm Ai}\big((J_+
  \beta)^{1/3}e^{ i\pi/6}\big[(1- x)+ iJ_+ ^{-1}\bar \lambda_+ ]\big)}
{{\rm Ai}\big(J_+^{-2/3} \beta^{1/3}e^{ i2\pi/3}\bar \lambda_+\big)}\Theta_+ \,.
\end{equation}
\end{subequations} 
    where
\begin{displaymath}
J_\pm = U^\prime(\pm 1)\,,
\end{displaymath}
\begin{equation}
\label{eq:43}
  \lambda_\pm = \mu - i(U(\pm 1)-\nu)\,,
\end{equation}
and
\begin{displaymath}
  \Theta_\pm (x)=\eta(1\mp x)\,.
\end{displaymath}
 The cutoff function
 $\eta\in C^\infty(\R_+,[0,1])$ satisfies
 \begin{displaymath}
   \eta(x)=
   \begin{cases}
    1 & x<\frac{1}{2} \\
    0 & x>1\,.
   \end{cases}
 \end{displaymath}
Let now $A_0$ denote the holomorphic extension to $\mathbb C$ of
\begin{equation}
\label{eq:44}
 x \mapsto  A_0(x)=e^{i\pi/6}\int_x^{+\infty}\Ai(e^{i\pi/6}t)\,dt\,,
\end{equation}
 and
\begin{displaymath}
  \Sg_\lambda =\{\,z\,|\,A_0 (i z)=0\,\}\,. 
\end{displaymath}
Finally we set  (see  \cite[Eq. (6.10)]{AH1})
\begin{equation}
\label{eq:45}
\vartheta_1^r:=  \inf_{z\in\Sg_\lambda} \Re z\,.
\end{equation}
\begin{proposition}
\label{lem:boundary}
Let $J_m=\min(J_+,J_-)$, $C_0>0$, and $\lambda_0>0$. Then, there exist 
$\Upsilon>0$, $\beta_0>0$, and $C>0$ such that, for all $\beta\geq \beta_0$ and $\lambda =\mu + i \nu \in
\C$ satisfying $|\lambda|\leq \lambda_0$ and
\begin{equation}
  \label{eq:46}
\beta^{1/3}\mu\leq J_m^{2/3}\Upsilon \,,
\end{equation}
   for any  $\zeta_-,\zeta_+\in H^1(-1,1)$ satisfying \eqref{eq:41},
\begin{equation} 
\label{eq:47}
\|\zeta_\pm \|_{1,2}\leq C_0\,.
\end{equation}
 for any pair $(v,g)$ in $ D(\LL_\beta^\zeta) \times H^1(-1,1) $ satisfying 
  \begin{equation}
    \label{eq:48}
  (\LL_\beta^\zeta-\beta\lambda)v=g\,,
  \end{equation}
  we have 
\begin{equation}
\label{eq:49}
  |v(\pm1)|\leq C\,   [1+|\lambda_\pm \,  |\beta^{1/3}]^{1/2}\, \beta^{-2/3} \,  \|g\|_{1,2}\,.
\end{equation}
\end{proposition}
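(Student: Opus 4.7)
The plan is to reduce to the Dirichlet case $\LL_\beta^D$ by subtracting two Airy-type boundary layer correctors built from $\hat\psi_\pm$ in \eqref{eq:42}. By their construction (the Airy-quotient normalization together with the cutoff $\Theta_\pm$), $\hat\psi_+(1)=1$, $\hat\psi_+(-1)=0$, $\hat\psi_-(1)=0$, $\hat\psi_-(-1)=1$, so the decomposition
\begin{equation*}
v=w+v(1)\hat\psi_++v(-1)\hat\psi_-
\end{equation*}
places $w$ in $D(\LL_\beta^D)$ and yields
\begin{equation*}
(\LL_\beta^D-\beta\lambda)w=g-v(1)\,r_+-v(-1)\,r_-,\qquad r_\pm:=(\LL_\beta-\beta\lambda)\hat\psi_\pm\,.
\end{equation*}
The non-Dirichlet constraints $\langle\zeta_\pm,v\rangle=0$ then translate into the $2\times 2$ linear system
\begin{equation*}
M\begin{pmatrix}v(1)\\ v(-1)\end{pmatrix}=-\begin{pmatrix}\langle\zeta_+,w\rangle\\ \langle\zeta_-,w\rangle\end{pmatrix},\qquad M_{ij}=\langle\zeta_i,\hat\psi_j\rangle\,,
\end{equation*}
from which the boundary values will be recovered.

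The heart of the argument is a sharp estimate on the residuals $r_\pm$. By construction, the bare Airy factor exactly solves $(-d^2/dx^2+i\beta[U(\pm1)+J_\pm(x\mp1)]-\beta\lambda)(\cdot)=0$, so $r_\pm$ splits into a Taylor-remainder potential $i\beta[U(x)-U(\pm 1)-J_\pm(x\mp 1)]\hat\psi_\pm$ which factors through $U-U(\pm1)$, plus cutoff-commutator terms $-(\Theta_\pm)''\tilde\psi_\pm-2(\Theta_\pm)'\tilde\psi_\pm'$ supported in regions where the underlying Airy amplitude is exponentially small in $\beta^{1/3}$. The Taylor piece falls into the scope of Proposition \ref{Prop:3.8AH2}, which grants an additional power of $\beta^{-1}$ when the source has the $(U-\text{const})f$ structure, while the commutator piece is handled by Lemma \ref{lem:standard} together with the exponential Airy decay. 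Combined with the standard bound $\|(\LL_\beta^D-\beta\lambda)^{-1}g\|_{1,2}\leq C\beta^{-2/3}\|g\|_{1,2}$ from Lemma \ref{lem:standard}, this yields
\begin{equation*}
\|w\|_{1,2}\leq C\beta^{-2/3}\|g\|_{1,2}+C\delta_\beta[1+|\lambda_\pm|\beta^{1/3}]^{1/2}\big(|v(1)|+|v(-1)|\big),
\end{equation*}
with $\delta_\beta\to 0$.

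To close the argument one analyzes $M$. Via the Airy normalization in \eqref{eq:42} and the standard asymptotics of $\Ai$ along the rotated ray $e^{i 2\pi/3}\R_+$, the diagonal entries $\langle\zeta_\pm,\hat\psi_\pm\rangle$ are of size $\beta^{-1/3}/[1+|\lambda_\pm|\beta^{1/3}]^{1/2}$, while the off-diagonal entries $\langle\zeta_\pm,\hat\psi_\mp\rangle$ are strictly smaller, since the vanishing $\zeta_\pm(\mp 1)=0$ encoded in \eqref{eq:41} forces a Cauchy--Schwarz gain for $\zeta_\pm$ over the opposite boundary layer of width $\beta^{-1/3}$. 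Hence $M$ is invertible with $\|M^{-1}\|\lesssim\beta^{1/3}[1+|\lambda_\pm|\beta^{1/3}]^{1/2}$, and substituting the bound on $\|w\|_{1,2}$ into the right-hand side of the system (using $\|\zeta_\pm\|_{1,2}\leq C_0$) and absorbing the $\delta_\beta$ cross-terms into the left-hand side yields \eqref{eq:49}. The main obstacle is the sharp tracking of the Airy constants: one must verify that the normalizer $\Ai(J_\pm^{-2/3}\beta^{1/3}e^{i 2\pi/3}\lambda_\pm)^{-1}$ contributes precisely the factor $[1+|\lambda_\pm|\beta^{1/3}]^{1/2}$ and not a larger power, and also that the Taylor-remainder piece of $r_\pm$ genuinely saves a factor $\beta^{-1}$ through Proposition \ref{Prop:3.8AH2} despite the presence of the Airy prefactor.
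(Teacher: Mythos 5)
Your overall reduction is the same as the paper's: subtract the Airy boundary layers $\hat\psi_\pm$ to reduce to the Dirichlet problem, and recover $v(\pm1)$ from the two linear constraints $\langle\zeta_\pm,v\rangle=0$. The size of the near-diagonal entries $\langle\zeta_\pm,\hat\psi_\pm\rangle\sim\beta^{-1/3}[1+|\lambda_\pm|\beta^{1/3}]^{-1/2}$ and the gain on the off-diagonal entries are also as in the paper. However, there is a concrete gap in how you treat the $g$-contribution to $\langle\zeta_\pm,w\rangle$, and it costs you the full power of $\beta$.

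Since $\|M^{-1}\|\sim\beta^{1/3}[1+|\lambda_\pm|\beta^{1/3}]^{1/2}$, to obtain \eqref{eq:49} you must show $|\langle\zeta_\pm,w\rangle|\lesssim\beta^{-1}\|g\|_{1,2}$ for $w=(\LL_\beta^D-\beta\lambda)^{-1}g$. You instead invoke a bound $\|(\LL_\beta^D-\beta\lambda)^{-1}g\|_{1,2}\leq C\beta^{-2/3}\|g\|_{1,2}$ and then pair with $\zeta_\pm$. First, this bound does not follow from Lemma~\ref{lem:standard}, whose $H^1$ estimate is $\|d/dx\,(\LL_\beta^D-\beta\lambda)^{-1}\|\leq Cr_\beta^{1/2}\leq C\beta^{-1/3}$, i.e.\ a full $\beta^{1/3}$ worse than you claim. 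Second and more fundamentally, even the correct $L^2$ resolvent bound $\|w\|_2\lesssim\beta^{-2/3}\|g\|_2$ only gives $|\langle\zeta_\pm,w\rangle|\lesssim\beta^{-2/3}\|g\|_2$, which after multiplying by $\|M^{-1}\|$ produces $|v(\pm1)|\lesssim\beta^{-1/3}[1+|\lambda_\pm|\beta^{1/3}]^{1/2}\|g\|_{1,2}$ — off by $\beta^{1/3}$ from \eqref{eq:49}. The ingredient you are missing is Proposition~\ref{Dirichlet-L1-H1-0} (equation \eqref{eq:38}): one decomposes $w=i\,g(x_\nu)/\big(\beta[U-\nu-i\max(-\mu,\beta^{-1/3})]\big)+\tilde w$ with $\|\tilde w\|_1\leq C\beta^{-1}\|g\|_{1,2}$, and then estimates the pairing with $\zeta_\pm$ of the explicit pole term by integrating by parts against $\log[U-\nu-i\max(-\mu,\beta^{-1/3})]$ as in \eqref{eq:58}; both pieces deliver the decisive $\beta^{-1}$. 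Without this $L^1$-pole lemma and the logarithmic trick, the argument cannot close.

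A smaller concern: your attribution of the Taylor remainder $U-U(\pm1)-J_\pm(x\mp1)$ to Proposition~\ref{Prop:3.8AH2} is not quite right either, since that remainder is quadratic at $\pm1$ and is not of the form $(U-\nu)f$ with $\nu=\Im\lambda$; the paper instead quotes the precomputed Airy bound $\|\hat g_\pm\|_2\lesssim\beta^{1/6}[1+|\lambda_\pm|\beta^{1/3}]^{-5/4}$ (\eqref{eq:53}, from AH1 Eq.~(8.96)) together with an $L^1$ estimate on $(\LL_\beta^D-\beta\lambda)^{-1}\hat g_\pm$ to absorb the $v(\pm1)$ cross-terms.
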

\begin{proof}
  Let (see \cite[Eq. (6.20)]{AH1} for a similar construction)
\begin{equation}
\label{eq:50}
    v_D=v -v(1)\hat{\psi}_+ - v(-1)\hat{\psi}_- \,.
\end{equation}
Then $v_D$ is in the domain  of $\LL_\beta^D$ 
and satisfies
\begin{subequations}
  \label{eq:51}
  \begin{equation}
(\LL_\beta^D-\beta\lambda)v_D=g-v(1)\hat{g}_+ - v(-1)\hat{g}_- \,,
\end{equation}
where
\begin{equation}
    \hat{g}_\pm  =    (-\frac{d^2}{dx^2} +i\beta(U+i\lambda))\hat{\psi}_\pm 
\end{equation}
\end{subequations}
Recall from  \cite[Lemma 6.1]{AH1}
    \begin{equation} 
\label{eq:52}
\hat{g}_\pm = i\beta\, [U- U(\pm 1) -J_\pm(x\mp 1))]\, \hat{\psi}_\pm \mbox{ in }
      (-1,1)\,,
\end{equation}
and hence \cite[Eq. (8.96)]{AH1}
\begin{equation}
\label{eq:53}
  \| \hat g_\pm \|_2\leq C\, \beta^{1/6} [1+|\lambda_\pm|\beta^{1/3}]^{-5/4}\,. 
\end{equation}

We now use \cite[Lemma 5.7]{AH1} to obtain that 
\begin{equation}
\label{eq:54}
  \|v_D-w\|_1\leq C\beta^{-2/3}\big([1+|\lambda_+|\beta^{1/3}]^{-5/4}|v(1)|+ [1+|\lambda_-|\beta^{1/3}]^{-5/4}|v(-1)|\big) \,,
\end{equation}
where
\begin{displaymath}
  w=(\LL_\beta^D-\beta\lambda)^{-1}g\,.
\end{displaymath}
For the estimate of $\|w\|_1$  we use 
\eqref{eq:38} to obtain the following decomposition
\begin{subequations}
\label{eq:55}
  \begin{equation}
  w= i\frac{g(x_\nu)}{\beta[U-\nu- i \max(-\mu,\beta^{-1/3})]}+\tilde{w} \,,
\end{equation}
where
\begin{equation}
  \|\tilde{w}\|_1\leq C \beta^{-1}\, \|g\|_{1,2} \,.
\end{equation}
\end{subequations}
Since $v\in D(\LL_\beta^\zeta)$ we can write
\begin{equation}
\label{eq:56}
  0=\langle v,\zeta_\pm\rangle=\langle v_D-w,\zeta_\pm\rangle+\langle w,\zeta_\pm\rangle+v(1)\langle{ \hat{\psi}_+},\zeta_\pm\rangle+v(-1)\langle{ \hat{\psi}_-},\zeta_\pm\rangle
\end{equation}
Then we have by \eqref{eq:54}   and \eqref{eq:55}
\begin{multline}
\label{eq:57}
  |\langle v_D,\zeta_\pm\rangle|\leq |\langle w,\zeta_\pm\rangle|
   \\ + C\beta^{-2/3}\big([1+|\lambda_+|\beta^{1/3}]^{-5/4}|v(1)|+
  [1+|\lambda_-|\beta^{1/3}]^{-5/4}|v(-1)|\big)\,\|\zeta_\pm\|_\infty  \,.
\end{multline}
By (\ref{eq:55}a) it holds that
\begin{displaymath}
 |\langle w,\zeta_\pm\rangle|\leq   \Big|\Big\langle \frac{g(x_\nu)}{\beta[U-\nu- i
    \max(-\mu,\beta^{-1/3})]},\zeta_\pm\Big\rangle\Big|+ \|\tilde{w}\|_1\,\|\zeta_\pm\|_\infty \,,
\end{displaymath}
 which implies by (\ref{eq:55}b) and integration by parts  that
\begin{equation}
\label{eq:58}
\begin{array}{ll}
   |\langle w,\zeta_\pm\rangle| &  \leq\frac{|g(x_\nu)|}{\beta}\bigg|\frac{\zeta_\pm}{U^\prime}\log([U-\nu- i
    \max(-\mu,\beta^{-1/3})])\Big|_{-1}^1\bigg|\\ 
    & \quad +  \frac{\|g\|_\infty}{\beta}\,\Big|\Big\langle \log[U-\nu- i
    \max(-\mu,\beta^{-1/3})],\Big(\frac{\zeta_\pm}{U^\prime}\Big)^\prime\Big\rangle\Big|+\frac{C}{\beta}\|g\|_{1,2}\,.
    \end{array}
\end{equation}
Since $g\in H^1_0(-1,1)$,  we may write,  for $\mu\geq-1$,
\begin{displaymath}
\begin{array}{l}
  |g(x_\nu)|\bigg|\frac{\zeta_\pm}{U^\prime}\log([U-\nu- i\max(-\mu,\beta^{-1/3})])\Big|_{-1}^1 \bigg|\\  \quad \leq
   C \, \|g^\prime\|_2|\min(1-x_\nu,1+x_\nu)|^{1/2} [|\log(U(1)-\nu)|+ |\log(U(-1)-\nu)| ]\\
  \qquad \leq
  \widehat C\, \|g^\prime\|_2\,.
  \end{array}
\end{displaymath}
From the above, \eqref{eq:58}, and \eqref{eq:47} we obtain that
\begin{equation}
\label{eq:59}
   |\langle w,\zeta_\pm\rangle|\leq \frac{C}{\beta}\, \|g\|_{1,2}\,.
\end{equation}
 Note that for $\mu<-1$ \eqref{eq:59} easily follows from
  \eqref{eq:55}. \\
    From   \eqref{eq:57} and \eqref{eq:59}, we get
  \begin{equation}
\label{eq:52b}
   |\langle v_D,\zeta_\pm\rangle|\leq  C\beta^{-1} \|g\|_{1,2} + C\beta^{-2/3}\big([1+|\lambda_+|\beta^{1/3}]^{-5/4}|v(1)|+
  [1+|\lambda_-|\beta^{1/3}]^{-5/4}|v(-1)|\big)  \,.
\end{equation}
Then we may proceed as in the proof of \cite[Lemma 6.2]{AH1} to
obtain, using \cite[Eq. (8.91)]{AH1}, that
  \begin{equation}
\label{eq:60}
  |\langle\zeta_\pm -1, \hat{\psi}_\pm \rangle| + |\langle\zeta_\pm , \hat{\psi}_\mp\rangle| \leq 
  2\|\zeta^\prime_\pm \|_2\|[1\mp x]^{1/2}\hat{\psi}_\pm \|_1\leq C\beta^{-1/2}[1+|\lambda_\pm|\beta^{1/3}]^{-3/4} \,.
\end{equation}
 Returning to \eqref{eq:56},   we rewrite it  (for one choice of $\pm$)  in the form
\begin{equation} \label{eq:61}
\begin{array}{ll}
  \langle \hat{\psi}_+,1\rangle v(1)& =  \langle \hat{\psi}_+,\zeta_+ \rangle v(1) + \langle \hat{\psi}_+,1-\zeta_+ \rangle v(1)\\
  &= -  \langle v_D,\zeta_+\rangle   - v(-1)\langle{ \hat{\psi}_-},\zeta_+\rangle  + \langle \hat{\psi}_+,1-\zeta_+ \rangle v(1)
 \end{array}
  \end{equation}
Then we get from \eqref{eq:60} and \eqref{eq:61}
\begin{equation}\label{eq:62}
|v(1)\langle \hat{\psi}_+,1 \rangle|  \leq C\beta^{-1/2}[1+|\lambda_+|\beta^{1/3}]^{-3/4}(|v(1)|+|v(-1)|) +   |\langle v_D,\zeta_+\rangle|\,.
\end{equation}
  Observing  that 
\begin{displaymath}
  \langle \hat{\psi}_\pm,1\rangle=C_\pm\beta^{-1/3}[1+|\lambda_\pm|\beta^{1/3}]^{-1/2}[1+\OO(\beta^{-1})] \,,
\end{displaymath}
 for some $C_\pm\neq0$ (which is guaranteed for $\Upsilon<\vartheta_1^r$), 
 we  obtain that
\begin{displaymath}
  |v(1)|\leq C\beta^{-1/6}[1+|\lambda_+|\beta^{1/3}]^{-1/4}(|v(1)|+|v(-1)|)+ C\beta^{1/3}[1+|\lambda_+|\beta^{1/3}]^{1/2}|\langle v_D,\zeta_\pm\rangle|\,.
\end{displaymath}
A similar estimate for $v(-1)$ can be obtained as well.\\ 

The above, together with \eqref{eq:52b},  yields
\eqref{eq:49}.
\end{proof}

\section{Orr-Sommerfeld estimates}
\label{sec:4}

\subsection{Preliminaries}
For the convenience of the reader we repeat here the definition of the
Orr-Sommerfeld operator from \eqref{eq:6}
\begin{subequations}
  \label{eq:63}
  \begin{equation}
 B_{\lambda,\alpha,\beta}^\D \phi= \Big(-\frac{d^2}{dx^2}+i\beta(U+i\lambda)\Big)(-\phi^{\prime\prime}+\alpha^2\phi)+i\beta U^{\prime\prime}\phi \,,
\end{equation}
where $\phi\in D(B_{\lambda,\alpha,\beta}^D)$ and hence must satisfy
\begin{equation}
  \phi(\pm1)=\phi^\prime(\pm1)=0 \,.
\end{equation}
\end{subequations}
Let $\phi\in D(\B_{\lambda,\alpha,\beta}^\D)$, $f= \B_{\lambda,\alpha,\beta}\, \phi\in L^2(-1,1)$
and  $v_D\in H^2(-1,+1)$ be defined by
 \begin{equation}
  \label{eq:64}
  v_D=(U+i\lambda)(-\phi^{\prime\prime}+\alpha^2\phi)+U^{\prime\prime}\phi+ (U+i\lambda)[\phi^{\prime\prime}(1)\hat{\psi}_+ + \phi^{\prime\prime}(-1)\hat{\psi}_-] \,.
\end{equation}
It can be easily verified that 
\begin{displaymath}
  v_D \in  H^2(-1,+1)\cap H^1_0(-1,+1)\,.
\end{displaymath}
A simple computation  (see \cite[Eq. (7.4)-(7.5)]{AH1}) yields
that
\begin{subequations}
  \label{eq:65}
  \begin{equation}
   \Big(-\frac{d^2}{dx^2}+i\beta(U+i\lambda)\Big)v_D=g_D \,,
\end{equation}
where
\begin{equation}
\noindent  g_D=(U+i\lambda)( f+\phi^{\prime\prime}(1)\hat{g}_+ +\phi^{\prime\prime}(-1)\hat{g}_-) -   (U^{\prime\prime}\phi)^{\prime\prime} 
  -2U^\prime \tilde{v}_D^\prime-U^{\prime\prime} \tilde{v}_D\,,
\end{equation}
 $ \hat{g}_\pm$ is given by \eqref{eq:52} and
\begin{equation}
  \tilde{v}_D=-\phi^{\prime\prime}+\alpha^2\phi+\phi^{\prime\prime}(1)\hat{\psi}_+ +\phi^{\prime\prime}(-1)\hat{\psi}_- \,.
\end{equation}
\end{subequations}

We begin by estimating the contribution of the boundary terms in
\eqref{eq:64}. 
\begin{lemma}
\label{lem:inviscid-decay} 
 Let $U\in C^3([-1,1])$ satisfy \eqref{eq:5}. Let further $J_\pm=U^\prime(\pm1)$.
There exist positive constants $C$, $\mu_0$, and $\beta_0$ such that, for
all $\beta\geq \beta_0$ and $\lambda=\mu+i \nu$ s.t.  $|\mu|\leq \mu_0$,
it holds that
   \begin{equation}
\label{eq:66}
 \|(\A_{\lambda,\alpha}^\D)^{-1}(U+i\lambda)\hat{\psi}_\pm \|_{1,2} \leq 
 C\, [1+\beta^{1/3}|\lambda_\pm|]^{-3/4} \, \beta^{-1/2}
 \,,
   \end{equation}
where 
\begin{equation}
  \label{eq:67}
\lambda_\pm=\mu + i(\nu-U(\pm1)) \,.
\end{equation}
 \end{lemma}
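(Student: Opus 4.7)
My strategy is to invoke the uniform Rayleigh-operator bound of Proposition \ref{cor:strip-bounded-inverse} applied to the special right-hand side $v_\pm:=(U+i\lambda)\hat{\psi}_\pm$, and to reduce the resulting quantity $N_{m,p}^\mp(v_\pm,\lambda)$ to a weighted $L^1$-norm of the Airy-type profile $\hat{\psi}_\pm$ alone. The crucial observation is the algebraic cancellation
\begin{displaymath}
\frac{v_\pm}{U+i\lambda}=\hat{\psi}_\pm,
\end{displaymath}
so that the factor $(U+i\lambda)^{-1}$ in the definition of $N_{m,p}^\mp$ does \emph{not} produce any singularity at $x_\nu$, regardless of how small $|\mu|$ is. We pick the weight $(1\mp\cdot)^{1/2}$, which vanishes precisely at the endpoint $\pm 1$ where $\hat{\psi}_\pm$ concentrates, thereby extracting maximal decay.

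The next step is to invoke Proposition \ref{cor:strip-bounded-inverse} (whose hypothesis \eqref{eq:34} is guaranteed by the standing assumption of the section) with $p$ fixed, yielding for $0<|\mu|\leq \mu_0$
\begin{displaymath}
\|(\A_{\lambda,\alpha}^\D)^{-1}v_\pm\|_{1,2}\leq C\, N_{m,p}^\mp(v_\pm,\lambda)\leq C\,\|(1\mp\cdot)^{1/2}\hat{\psi}_\pm\|_1.
\end{displaymath}
The remaining estimate
\begin{displaymath}
\|(1\mp\cdot)^{1/2}\hat{\psi}_\pm\|_1\leq C\, \beta^{-1/2}[1+|\lambda_\pm|\beta^{1/3}]^{-3/4}
\end{displaymath}
is a direct consequence of the Airy-function computation performed in \cite[Eq. (8.91)]{AH1}: one changes variables $t=(J_\pm\beta)^{1/3}(1\mp x)$, so that the cutoff support becomes $[0,O(\beta^{1/3})]$, the prefactor in $\hat{\psi}_\pm$ provides the gain $\beta^{-1/2}$ from $(J_\pm\beta)^{-1/3}\cdot (J_\pm\beta)^{-1/6}$ (the latter from the weight), and the quotient of Airy functions in \eqref{eq:42} produces the denominator $[1+|\lambda_\pm|\beta^{1/3}]^{3/4}$ via the standard asymptotics of $\Ai$ along $e^{i\pi/6}\R_+$. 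Substituting these two bounds yields \eqref{eq:66}.

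\textbf{Main obstacle.} The most delicate point is not any single computation but rather the extension of the estimate to the boundary case $\mu=0$, which is included in the range $|\mu|\leq \mu_0$ of the statement but excluded from Proposition \ref{cor:strip-bounded-inverse}. I would handle this by approximating $\lambda=i\nu$ by $\lambda_n=\mu_n+i\nu$ with $\mu_n\downarrow 0$, applying the estimate derived above for each $n$, and passing to the limit. Since the right-hand side $\|(1\mp\cdot)^{1/2}\hat{\psi}_\pm\|_1$ depends continuously on $\lambda$ (the Airy quotient in \eqref{eq:42} is uniformly bounded on the relevant parameter range by \eqref{eq:46}), and since $\A_{i\nu,\alpha}^\D$ is injective by Corollary \ref{cor:no-eigenvalues} with the Fredholm-index-zero framework of Remark \ref{rem:fredholm} applicable via the weighted domain \eqref{eq:12}, the limit function inherits the bound. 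A minor additional check, which is routine given the explicit form of $\hat{\psi}_\pm$, is that the $H^1$-norms $\|v_\pm\|_{1,2}$ stay uniformly bounded so that the convergence $(\A_{\lambda_n,\alpha}^\D)^{-1}v_\pm^{(n)}\to (\A_{i\nu,\alpha}^\D)^{-1}v_\pm$ takes place in $H^1$.
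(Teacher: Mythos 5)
Your proof is correct and follows exactly the same route as the paper: reduce to Proposition~\ref{cor:strip-bounded-inverse} via \eqref{eq:28}, use the cancellation $v_\pm/(U+i\lambda)=\hat\psi_\pm$ to pass from $N_{m,p}^\mp(v_\pm,\lambda)$ to $\|(1\mp\cdot)^{1/2}\hat\psi_\pm\|_1$, and close with the Airy estimate of \cite[Eq.~(8.91)]{AH1}. Your extra paragraph on the endpoint $\mu=0$ (which Proposition~\ref{cor:strip-bounded-inverse} excludes but the lemma's hypothesis $|\mu|\leq\mu_0$ formally admits) is a reasonable precaution that the paper silently skips; it is ultimately harmless since the lemma is only ever invoked with $\mu\neq 0$, but your approximation argument, with the stated compactness/uniqueness ingredients, is a legitimate way to cover it.
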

 \begin{proof}
  Since $|\mu|\leq \mu_0$ we may use  Proposition
    \ref{cor:strip-bounded-inverse}  to obtain the invertibility of $
    A_{\lambda,\alpha}^\D$ and then use \eqref{eq:28} to obtain
  \begin{displaymath}
     \|(\A_{\lambda,\alpha}^\D)^{-1}(U+i\lambda)\hat{\psi}_\pm \|_{1,2} \leq C\, \|(1\mp x)^\frac 12 \hat \psi_\pm\|_1 \,,
  \end{displaymath}
  from which the lemma easily follows using \cite[Eq. (8.91)]{AH1}.
 \end{proof}

\subsection{Bounded $\alpha$ and $|\lambda|$}
\label{sec:3.1}

In this subsection we prove the following result:
\begin{proposition}
\label{lem:no-slip-convex-U}
Let $\lambda_0$  and $\alpha_0$ be positive.  There exist positive $\beta_0$,
$\Upsilon$, $\mu_0$, and $C>0$ such that for all $\beta\geq \beta_0$,  $\lambda =\mu + i \nu$
s.t. $-\mu_0\leq \mu \leq \Upsilon\beta^{-1/3}$, $|\nu|\leq \lambda_0$,  $\alpha \in [0,\alpha_0]$,
$\B_{\lambda,\alpha,\beta}^\D$ is invertible and it holds that
\begin{subequations}
\label{eq:68}
    \begin{equation}
      \sup_{
        \begin{subarray}{c}
       -\mu_0\leq\Re\lambda\leq\Upsilon\beta^{-1/3} \\
          0\leq \alpha\leq \alpha_0
        \end{subarray}}\big\|(\B_{\lambda,\alpha,\beta}^\D)^{-1}\big\|+
      \big\|\frac{d}{dx}\, (\B_{\lambda,\alpha,\beta}^\D)^{-1}\big\|\leq
      C \, \beta^{-1/2}\,.
  \end{equation}
Furthermore, we have 
    \begin{equation}
      \sup_{
        \begin{subarray}{c}
          \Re\lambda=\Upsilon\beta^{-1/3} \\
          0\leq \alpha\leq \alpha_0
        \end{subarray}}\big\|(\B_{\lambda,\alpha,\beta}^\D)^{-1}\big\|+
      \big\|\frac{d}{dx}\, (\B_{\lambda,\alpha,\beta}^\D)^{-1}\big\|\leq C\,  \beta^{-5/6 }\,.
  \end{equation}
\end{subequations}
\end{proposition}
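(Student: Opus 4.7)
The plan is to invert $\B^\D_{\lambda,\alpha,\beta}$ by reducing the fourth-order problem to the composition of the Schrödinger resolvent $(\LL^D_\beta-\beta\lambda)^{-1}$ and the Rayleigh inverse $(\A^\D_{\lambda,\alpha})^{-1}$, letting the Airy-type boundary-layer correctors $\hat\psi_\pm$ from \eqref{eq:42} absorb the four boundary conditions of $D(\B^\D)$. Given $\phi\in D(\B^\D)$ with $\B_{\lambda,\alpha,\beta}\phi=f$, I set $c_\pm:=\phi''(\pm 1)$ and introduce $v_D$ as in \eqref{eq:64}; then $v_D\in H^2(-1,1)\cap H^1_0(-1,1)$, it satisfies the second-order equation $(\LL^D_\beta-\beta\lambda)v_D=g_D$ with $g_D$ given in \eqref{eq:65}, and one has the Rayleigh identity $\A_{\lambda,\alpha}\phi=v_D-(U+i\lambda)[c_+\hat\psi_++c_-\hat\psi_-]$. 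The strategy is to obtain two a priori estimates for $\phi$ — one expressing $\|\phi\|_{1,2}$ in terms of $(v_D, c_\pm)$ via the Rayleigh inverse, the other expressing $(v_D, c_\pm)$ in terms of $f$ and $\phi$ via the Schrödinger inverse — and then to close the loop.

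The first main step is the Schrödinger estimate: by Lemma \ref{lem:standard}, $\|v_D\|_{1,2}\le C\beta^{-1/3}\|g_D\|_2$ throughout the strip $-\mu_0\le\mu\le\Upsilon\beta^{-1/3}$, which can be refined to an $L^1$-type estimate on $N^\pm_{m,p}(v_D,\lambda)$ using Proposition \ref{Dirichlet-L1-H1-0}. The right-hand side $g_D$ splits into the data $(U+i\lambda)f$, the boundary-layer remainders $(U+i\lambda)c_\pm\hat g_\pm$ (whose norm is controlled by \eqref{eq:53}, giving a factor $\beta^{1/6}[1+|\lambda_\pm|\beta^{1/3}]^{-5/4}$), and lower-order $U''$-type terms involving $\phi$ and $\phi'$ that are absorbed via Poincaré's inequality after choosing $\mu_0$ and $\Upsilon$ small. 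The second main step is the Rayleigh estimate: combining Proposition \ref{cor:strip-bounded-inverse} with the boundary-corrector bound of Lemma \ref{lem:inviscid-decay} yields
\begin{equation*}
\|\phi\|_{1,2}\le C\,N^\pm_{m,p}(v_D,\lambda)+C\beta^{-1/2}\bigl([1+|\lambda_+|\beta^{1/3}]^{-3/4}|c_+|+[1+|\lambda_-|\beta^{1/3}]^{-3/4}|c_-|\bigr).
\end{equation*}

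The remaining two unknowns $c_\pm$ are fixed by the boundary conditions $\phi'(\pm 1)=0$. One approach is to set up the $2\times 2$ linear system obtained by differentiating the Rayleigh representation of $\phi$ at $x=\pm 1$, whose coefficient matrix has diagonal entries of order $\beta^{-1/6}$ from the Airy traces of $\hat\psi_\pm$ and exponentially small off-diagonal entries; a cleaner alternative is to reformulate in terms of $v:=-\phi''+\alpha^2\phi$, which has $v(\pm 1)=-c_\pm$ and, through the Dirichlet Helmholtz Green's function, satisfies integral constraints $\langle\zeta_\pm,v\rangle=0$ encoding $\phi'(\pm 1)=0$ for appropriate $\zeta_\pm$ satisfying \eqref{eq:41}. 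Proposition \ref{lem:boundary} applied to this $\zeta$-realization then directly bounds $|c_\pm|\le C\beta^{-2/3}[1+|\lambda_\pm|\beta^{1/3}]^{1/2}\|g_D\|_{1,2}$. Feeding this back into the Rayleigh estimate above and choosing $\mu_0$, $\Upsilon$ small enough to absorb the residual $\|\phi\|_{1,2}$-contribution inside $\|g_D\|_{1,2}$ closes the system and yields \eqref{eq:68}a.

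The main obstacle is the bookkeeping in this closing step: the Schrödinger inverse contributes $\beta^{-1/3}$, the Airy boundary-layer remainders $\hat g_\pm$ grow like $\beta^{1/6}$, the Rayleigh boundary-correctors are of order $\beta^{-1/2}$, and the boundary-trace estimate for $c_\pm$ brings $\beta^{-2/3}[1+|\lambda_\pm|\beta^{1/3}]^{1/2}$; all these powers must align — with help from the decaying Airy weights $[1+|\lambda_\pm|\beta^{1/3}]^{-3/4}$ and $[1+|\lambda_\pm|\beta^{1/3}]^{-5/4}$ — to deliver exactly $\beta^{-1/2}$. The sharpened bound \eqref{eq:68}b along the line $\Re\lambda=\Upsilon\beta^{-1/3}$ is obtained by noting that on this line the strictly positive $\mu$ activates the full Airy decay in the boundary-trace argument of Proposition \ref{lem:boundary}, producing an additional $\beta^{-1/3}$ improvement in the estimate of $c_\pm$; this extra decay propagates once through the loop and upgrades the final bound from $\beta^{-1/2}$ to $\beta^{-5/6}$.
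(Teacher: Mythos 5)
Your overall framework is indeed the one the paper uses: define $v_D$ as in \eqref{eq:64}, treat the resulting Dirichlet Schr\"odinger problem \eqref{eq:65}, read $\phi$ back off via the Rayleigh inverse, and fix $\phi''(\pm1)$ using the $\zeta$-realization and Proposition~\ref{lem:boundary}. But the sketch conceals, or gets wrong, the three places where the paper's argument actually does the work, and as written the loop does not close.

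First, describing $(U''\phi)''$, $U'\tilde v_D'$ and $U''\tilde v_D$ as ``lower-order $U''$-type terms\dots absorbed via Poincar\'e's inequality'' misses the heart of the proof. The term $-i\beta U''\phi$ makes $g_D$ of size $\sim\beta\|\phi\|_{1,2}$, and after one Schr\"odinger inversion one does \emph{not} get a gain over $\|\phi\|_{1,2}$. The paper handles this in Lemma~\ref{lem:aux} by splitting $\tilde v_D$ as $\tilde v_D^1+\tilde v_D^2+\tilde v_D^3$ (\eqref{eq:76}), using the $(U-\nu)$-weighted resolvent bound \eqref{eq:39} for $\tilde v_D^1$, the $L^1$-type expansion \eqref{eq:38} for $\tilde v_D^3$, and a quadratic-form identity \eqref{eq:93}--\eqref{eq:101} to control $(\tilde v_D^3)'$; this is what produces the $\sigma(\beta,\mu)$ factor in \eqref{eq:69} and \eqref{eq:102}. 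None of that is captured by invoking Poincar\'e.

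Second, even granting Lemma~\ref{lem:aux}, the ``close the loop by choosing $\mu_0,\Upsilon$ small'' step only works on the positive side $0<\mu\le\Upsilon\beta^{-1/3}$. For $-\mu_0\le\mu<0$ with $\mu_0$ fixed independent of $\beta$, shrinking $\mu_0$ helps nothing: the Schr\"odinger gain $\beta^{-1/3}$ times $\sigma\sim\beta^{1/3}$ is $O(1)$, and one is stuck with a Neumann-series coefficient $\approx 1$. The paper gets out of this by (a) using the $|\mu|$-weighted Rayleigh bounds \eqref{eq:13}c through \eqref{eq:128}, (b) the $L^2\to L^p$ interpolated Schr\"odinger estimates \eqref{eq:110}--\eqref{eq:111}, and (c) a dichotomy $|\mu|\ge[\delta\beta]^{-1}$ versus $|\mu|<[\delta\beta]^{-1}$ (Steps~1 and~2), the latter handled by shifting to $\tilde\lambda=2[\delta\beta]^{-1}+i\nu$ and reabsorbing the shift. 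Your sketch mentions none of (a)--(c), and without them there is no contraction.

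Third, your explanation of the sharper bound \eqref{eq:68}b is not correct. Proposition~\ref{lem:boundary} gives $|v(\pm1)|\le C[1+|\lambda_\pm|\beta^{1/3}]^{1/2}\beta^{-2/3}\|g\|_{1,2}$, and $|\lambda_\pm|=|\mu+i(\nu-U(\pm1))|$ does not decrease when $\mu$ becomes positive; if anything the prefactor grows. The $\beta^{-1/2}\to\beta^{-5/6}$ improvement at $\Re\lambda=\Upsilon\beta^{-1/3}$ comes instead from \eqref{eq:114}, i.e.\ from the $|\mu|^{-1/p}$ and $\mu_+^{-3/4}$ factors inherited from the Rayleigh bounds (and the $\mu_+$-dependent growth of $\sigma$), evaluated at $\mu=\Upsilon\beta^{-1/3}$. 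Also note that the estimate you quote for $c_\pm$ conflates $g_D$ (the RHS of the \emph{Dirichlet} problem for $v_D$) with the RHS $f-i\beta U''\phi$ of the $\zeta$-problem for $v=-\phi''+\alpha^2\phi$; what one actually gets is \eqref{eq:74}, $|\phi''(\pm1)|\le C[1+|\lambda_\pm|\beta^{1/3}]^{1/2}(\beta^{1/3}\|\phi\|_{1,2}+\beta^{-1/2}\|f\|_2)$, which carries a large factor $\beta^{1/3}\|\phi\|_{1,2}$, not a small one.
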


To prove the above result we need to prove first the following auxiliary
estimates 
\begin{lemma}
\label{lem:aux}
  Let $\alpha_0$ and $\lambda_0$ be positive numbers. There exist positive $C$,
  $\beta_0$, and $\Upsilon$ such that,  for all $\beta\geq \beta_0$,  
  $\lambda =\mu + i \nu$ s.t. $-\lambda_0<\mu<\Upsilon\beta^{-1/3}$ and  $|\nu|\leq \lambda_0$, $\alpha \in
  [0,\alpha_0]$,  and $\phi \in  H^4(-1,1)\cap H^2_0(-1,1)$  it holds that 
  \begin{equation}
    \label{eq:69}  
\|g_D-(U+i\lambda)( f+\phi^{\prime\prime}(1)\hat{g}_+ +\phi^{\prime\prime}(-1)\hat{g}_-) \|_2\leq
C\,\big(\sigma\|\phi^\prime\|_2+\beta^{1/6}\sigma^{-1}\|f\|_2\big) \,,
  \end{equation}
where
\begin{equation}
  \label{eq:70}
\sigma(\beta,\mu)=\mu_+^{1/2}\beta^{2/3}+\beta^{1/3}+|\mu|^{1/2}\beta^{1/2}\log^{1/2}\beta
\end{equation}
  where $f$ is given in \eqref{eq:1.7}, $g_D$ is given in
  (\ref{eq:65}b), and, with $\lambda_m=\min(|\lambda_+|,|\lambda_-|)$,
\begin{equation}
 \label{eq:71}
\|\phi^{\prime\prime}(1)\hat{g}_+ +\phi^{\prime\prime}(-1)\hat{g}_-\|_2\leq C\, 
 \big[1+\lambda_m\beta^{1/3}]^{-1/4}[\beta^{1/2}\|\phi^\prime\|_2+\beta^{-1/3}\|f\|_2\big] \,.
\end{equation}
\end{lemma}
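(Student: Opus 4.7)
The plan is first to compute the error explicitly. Subtracting $(U+i\lambda)\bigl(f+\phi''(1)\hat g_+ + \phi''(-1)\hat g_-\bigr)$ from the definition of $g_D$ in \eqref{eq:65} yields
\begin{equation*}
g_D - (U+i\lambda)\bigl(f+\phi''(1)\hat g_+ + \phi''(-1)\hat g_-\bigr) = -(U''\phi)'' - 2U'\tilde v_D' - U''\tilde v_D \,.
\end{equation*}
Expanding $(U''\phi)'' = U^{(4)}\phi + 2U'''\phi' + U''\phi''$ and using the expression for $\tilde v_D$ from \eqref{eq:65}, the right-hand side is a linear combination, with $C^0$ coefficients since $U\in C^4$, of $\phi$, $\phi'$, $\phi''$, $\phi'''$ and the boundary-layer terms $\phi''(\pm 1)\hat\psi_\pm$, $\phi''(\pm 1)\hat\psi_\pm'$. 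Applying Poincar\'e to absorb $\|\phi\|_2 \leq C\|\phi'\|_2$, the task reduces to controlling $\|\phi''\|_2$, $\|\phi'''\|_2$ and the Airy-weighted quantities $|\phi''(\pm 1)|\,\|\hat\psi_\pm\|_2$, $|\phi''(\pm 1)|\,\|\hat\psi_\pm'\|_2$ in terms of $\|\phi'\|_2$ and $\|f\|_2$.

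The core step is to bound the higher-order derivatives of $\phi$ and the boundary values $|\phi''(\pm 1)|$. Since $v_D \in H^2\cap H^1_0$ satisfies $(\LL_\beta^D - \beta\lambda)v_D = g_D$ by \eqref{eq:65}, I would apply the Schr\"odinger resolvent estimates of Section~\ref{sec:3}: Lemma~\ref{lem:standard} provides the basic $\beta^{1/3}$ scale together with the $\mu_+^{1/2}\beta^{2/3}$ penalty in the dissipation-deficient range, while the $(U-\nu)$-weighted estimate \eqref{eq:39} of Proposition~\ref{Prop:3.8AH2} exploits the algebraic decomposition $v_D = (U+i\lambda)\tilde v_D + U''\phi$ to convert the $L^2$ bounds on $v_D$ into $L^2$ bounds on $\tilde v_D$ that are free of the singular $(U+i\lambda)^{-1}$ factor. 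Translating back through $\tilde v_D = -\phi'' + \alpha^2\phi + \phi''(1)\hat\psi_+ + \phi''(-1)\hat\psi_-$, together with the Airy asymptotics $\|\hat\psi_\pm\|_2 \leq C\beta^{-1/6}[1+|\lambda_\pm|\beta^{1/3}]^{-1/4}$ and $\|\hat\psi_\pm'\|_2 \leq C\beta^{1/6}[1+|\lambda_\pm|\beta^{1/3}]^{1/4}$ from \cite{AH1}, one recovers bounds on $\|\phi''\|_2$ and $\|\phi'''\|_2$. The logarithmic contribution $|\mu|^{1/2}\beta^{1/2}\log^{1/2}\beta$ in $\sigma$ enters through a Hardy-type estimate at the critical layer $x_\nu$ where $U+i\lambda$ nearly vanishes, analogous to Step~1 of the proof of Proposition~\ref{prop:inviscid-boundedness-1}.

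The boundary estimate on $|\phi''(\pm 1)|$ needed for \eqref{eq:71} comes from Proposition~\ref{lem:boundary} applied to $v_D$ (taking the trace functionals $\zeta_\pm$ compatible with $v_D(\pm 1)=0$), combined with the pointwise Airy bound \eqref{eq:53}, $\|\hat g_\pm\|_2 \leq C\beta^{1/6}[1+|\lambda_\pm|\beta^{1/3}]^{-5/4}$. The weight $[1+\lambda_m\beta^{1/3}]^{-1/4}$ on the right-hand side of \eqref{eq:71} is exactly what Proposition~\ref{lem:boundary} delivers after multiplication by the pointwise Airy norm of $\hat g_\pm$ and use of $\lambda_m\leq|\lambda_\pm|$.

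The main obstacle will be the sharp bookkeeping of the three scales comprising $\sigma$ in \eqref{eq:70}: the universal Schr\"odinger scale $\beta^{1/3}$, the critical-layer logarithmic correction $|\mu|^{1/2}\beta^{1/2}\log^{1/2}\beta$, and the boundary-layer penalty $\mu_+^{1/2}\beta^{2/3}$ that activates only in the regime $\mu>0$, while simultaneously preserving the correct Airy-weighted dependence on $|\lambda_\pm|\beta^{1/3}$ at the boundary in \eqref{eq:71}. Matching the weights between the interior estimate on $\|\phi''\|_2$ and the boundary estimate on $|\phi''(\pm 1)|$ is where the balance $\beta^{1/6}\sigma^{-1}$ on the $\|f\|_2$-side of \eqref{eq:69} emerges.
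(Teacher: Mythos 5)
Your identification of the error term $g_D-(U+i\lambda)(f+\phi''(1)\hat g_++\phi''(-1)\hat g_-)=-(U''\phi)''-2U'\tilde v_D'-U''\tilde v_D$ and the broad toolkit (Schr\"odinger resolvent estimates, Airy asymptotics, the boundary trace estimate of Proposition~\ref{lem:boundary}) are correct, but there is a structural flaw in the plan. You propose to expand the error into $\phi,\phi',\phi'',\phi'''$ and boundary-layer terms $\phi''(\pm1)\hat\psi_\pm,\phi''(\pm1)\hat\psi_\pm'$ and bound each piece separately. This forfeits a cancellation that the estimate relies on: the individual pieces of $\tilde v_D'=-\phi'''+\alpha^2\phi'+\phi''(1)\hat\psi_+'+\phi''(-1)\hat\psi_-'$ are much larger than $\tilde v_D'$ itself. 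Using \eqref{eq:74} for $|\phi''(\pm1)|$ and the Airy $L^2$ bounds for $\hat\psi_\pm'$, the contribution $|\phi''(\pm1)|\,\|\hat\psi_\pm'\|_2$ can reach order $\beta^{3/4}\|\phi'\|_2$ when $|\lambda_\pm|\sim 1$, and by the triangle inequality $\|\phi'''\|_2$ inherits the same size; meanwhile the target $\sigma\|\phi'\|_2$ in \eqref{eq:69} never exceeds $O(\beta^{1/2}\log^{1/2}\beta\,\|\phi'\|_2)$ under the constraint $\mu\leq\Upsilon\beta^{-1/3}$. So separate control of $\|\phi'''\|_2$ cannot recover \eqref{eq:69}; one has to estimate $\|\tilde v_D'\|_2$ as a whole, via the resolvent equation for $\tilde v_D$, so that $-\phi'''$ and the boundary-layer derivatives are treated together.

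That is what the paper does, and the devices it uses are absent from your outline. Starting from \eqref{eq:75} for $\tilde v_D$ (not the equation \eqref{eq:65} for $v_D$, whose source $g_D$ already contains $\tilde v_D'$ and would be circular), it decomposes $\tilde v_D=\tilde v_D^1+\tilde v_D^2+\tilde v_D^3$ per \eqref{eq:76}, separating the source into $-i\beta[U''-U''(x_\nu)]\phi$, the boundary/forcing terms $\phi''(\pm1)\hat g_\pm+f$, and the critical-layer piece $-i\beta U''(x_\nu)\phi$. The derivative estimate of the first piece needs a further split $\tilde v_D^1=-\frac{U''-U''(x_\nu)}{U-\nu}\phi+u_D$ (cf.~\eqref{eq:82}), and the $\log\beta$ in $\sigma$ arises from the $L^1$ bound $\|u_D^2\|_1\leq C\min(|\mu|\log\beta,1)\|\phi\|_\infty$ in \eqref{eq:85}, which then enters the energy identity \eqref{eq:93} for $(\tilde v_D^3)'$ through the chain \eqref{eq:94}--\eqref{eq:100}; that chain also exploits $\Re\langle\tilde v_D,i\phi\rangle=\Re\langle\phi''(1)\hat\psi_++\phi''(-1)\hat\psi_-,i\phi\rangle$ and the one-sided bounds \eqref{eq:89}--\eqref{eq:90} on $\|\phi''\|_{L^2(0,1)},\|\phi''\|_{L^2(-1,0)}$. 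None of this appears in your plan, and the mechanism you offer for the logarithm (a Hardy estimate near $x_\nu$) is not where it comes from. Your treatment of \eqref{eq:71} is essentially right, except that Proposition~\ref{lem:boundary} is applied to $v=-\phi''+\alpha^2\phi$ with $\zeta_\pm$ from \eqref{eq:130}, not to $v_D$.
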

\begin{proof}
To prove \eqref{eq:69} we need an  estimate for $g_D$. We begin with
an estimate of the first term on the right-hand-side of the definition
of $g_D$ in (\ref{eq:65}b) yielding, thereby, a proof of \eqref{eq:71}.

{\em Step 1:} Prove \eqref{eq:71}. 

Recall that $\hat{g}_\pm $ is given by \eqref{eq:52}.
We seek first an estimate for $\phi^{\prime\prime}(\pm1)$ by using Lemma
\ref{lem:boundary}. To this end, we introduce, for any $\alpha$, the functions  $\zeta_+,\zeta_-\in
H^2(-1,1)$ satisfy
  \begin{equation}
\label{eq:130}
    \begin{cases}
      -\zeta_\pm^{\prime\prime}+\alpha^2\zeta_\pm=0 & x\in(-1,1) \\
      \zeta_\pm(\pm1)=1 & \zeta_\pm(\mp1)=0\,.  
    \end{cases}
  \end{equation}
   Note that since $0\leq\alpha\leq\alpha_0$ it holds that $\|\zeta_\pm^\prime\|_2\leq C$ as is
    required in the statement of Lemma \ref{lem:boundary}.
  It can be easily verified that for any $\phi\in H^2_0(-1,1)$ it holds that 
\begin{equation}
\label{eq:72}
  \langle\zeta_\pm,-\phi^{\prime\prime}+\alpha^2\phi\rangle=0 \,.
\end{equation}
Hence we may write \eqref{eq:63} in the form
\begin{equation}
\label{eq:73}
  (\LL_\beta^\zeta-\lambda)v=f-i\beta U^{\prime\prime}\phi\,,
\end{equation}
where $v=-\phi^{\prime\prime}+\alpha^2\phi$.  Consequently,  observing that $v(\pm
  1)=-\phi''(\pm 1)$, we may use \cite[Eq. (6.35)]{AH1}  (with
  $g=f$) 
and \eqref{eq:49}  (with
  $g=-i\beta U^{\prime\prime}\phi$) to obtain that
\begin{equation}
  \label{eq:74}
|\phi^{\prime\prime}(\pm1)|\leq C\, [1+|\lambda_\pm|\beta^{1/3}]^{1/2}[\beta^{1/3} \|\phi\|_{1,2} +\beta^{-1/2}\|f\|_2]  \,.
\end{equation}
Combining the above with \eqref{eq:53} yields
\begin{displaymath}
  \|\phi^{\prime\prime}(1)\hat{g}_+ +\phi^{\prime\prime}(-1)\hat{g}_-\|_2\leq C
 [1+\lambda_m\beta^{1/3}]^{-1/4}[\beta^{1/2} \|\phi\|_{1,2} +\beta^{-1/3}\|f\|_2] \,,
\end{displaymath}
which is precisely \eqref{eq:71}. 

In the next two steps, we obtain an estimate for the last two terms on the
right-hand-side of (\ref{eq:65}b). \\

{ {\em Step 2:} Estimate $\tilde{v}_D$.}

\noindent As $ \tilde{v}_D\in H^1_0(-1,1)$ we
may write, using (\ref{eq:65}c) and \eqref{eq:1.7},  
\begin{equation}
\label{eq:75}
  (\LL_\beta^D-\beta\lambda)\tilde{v}_D=f-i\beta U^{\prime\prime}\phi+\phi^{\prime\prime}(1)\hat{g}_+ +\phi^{\prime\prime}(-1)\hat{g}_-\,.
\end{equation}
We decompose $\tilde{v}_D$ in the following manner
\begin{subequations}
\label{eq:76}
\begin{equation}
  \tilde{v}_D=\tilde{v}_D^1+\tilde{v}_D^2+\tilde{v}_D^3\,,
\end{equation}
where 
\begin{equation}
  \begin{array}{lll}
&\tilde{v}_D^1 & =-i\beta(\LL_\beta^D-\beta\lambda)^{-1} ( [U^{\prime\prime}-U^{\prime\prime}(x_\nu)] \phi)\\
  &\tilde{v}_D^2& =(\LL_\beta^D-\beta\lambda)^{-1}[\phi^{\prime\prime}(1)\hat{g}_+ +\phi^{\prime\prime}(-1)\hat{g}_-+f]\,,\\
 &\tilde{v}_D^3& =-i\beta(\LL_\beta^D-\beta\lambda)^{-1} U^{\prime\prime}(x_\nu)\phi\,. 
\end{array}
\end{equation}
\end{subequations}

By applying \eqref{eq:39} with $f= \Big(\frac{
  U^{\prime\prime}-U^{\prime\prime}(x_\nu)}{U-\nu}\Big) \phi$, we obtain  that 
\begin{equation}
\label{eq:77} 
  \|\tilde{v}_D^1\|_2\leq C\, \Big\| \Big(\frac{ U^{\prime\prime}-U^{\prime\prime}(x_\nu)}{U-\nu}\Big) \phi  \Big\|_2 \leq \widehat C\,  \| \phi\|_2 \,.
\end{equation}
Using \eqref{eq:37} and \cite[Lemma 5.7]{AH1} yields
\begin{displaymath}
 \|\tilde{v}_D^2\|_2+\beta^{-1/3}\|(\tilde{v}_D^2)^\prime\|_2 +\beta^{1/6}\|\tilde{v}_D^2\|_1\leq C\, \beta^{-2/3}[|\phi^{\prime\prime}(1)|\,\|\hat{g}_+\|_2
  +|\phi^{\prime\prime}(-1)|\,\|\hat{g}_-\|_2+\|f\|_2]\,.
\end{displaymath}
By \eqref{eq:71}  we then obtain that
\begin{multline}
  \label{eq:78}
\|\tilde{v}_D^2\|_2+\beta^{-1/3}\|(\tilde{v}_D^2)^\prime\|_2
+\beta^{1/6}\|\tilde{v}_D^2\|_1 \\ \leq
C\, \big(\beta^{-1/6}[1+\lambda_m\beta^{1/3}]^{-1/4}\|\phi^\prime\|_2 +\beta^{-2/3}\|f\|_2\big)\,. 
\end{multline}
Finally, by \cite[Proposition 5.4]{AH1} it holds that
\begin{equation}
\label{eq:79}
    \|\tilde{v}_D^3\|_2\leq C\beta^{1/6}|U^{\prime\prime}(x_\nu)|\,\|\phi\|_\infty \,.
\end{equation}
Consequently, by the above, \eqref{eq:77}, and \eqref{eq:78},
\begin{equation}
  \label{eq:80}
  \|\tilde{v}_D\|_2\leq C\big( \beta^{1/6}\, \|\phi^\prime\|_2 +\beta^{-2/3}\|f\|_2\big) \,. 
\end{equation}

{\em Step 3:} Estimate $\tilde{v}_D^\prime$.\\

For the estimation of $g_D$ we need an estimate of
$\|\tilde{v}_D^\prime\|_2$ as well. \\
Note that if apply \eqref{eq:37} directly to (\ref{eq:76}b) we obtain
that
\begin{equation}
\label{eq:81}
  \| (\tilde{v}_D^1)'\|_2 \leq C\, \beta^{1/2} \|\phi\|_2\,,
\end{equation}
which is unsatisfactory. Consequently, we introduce the decomposition 
\begin{equation}
\label{eq:82}
  \tilde{v}_D^1= -\frac{U^{\prime\prime}-U^{\prime\prime}(x_\nu)}{U-\nu}\phi+u_D \,.
\end{equation}
Note that the first term on the right-hand-side satisfies
  \begin{displaymath}
    \Big\|\Big(\frac{U^{\prime\prime}-U^{\prime\prime}(x_\nu)}{U-\nu}\phi\Big)^\prime\Big\|_2\leq C\, \|\phi\|_{1,2}\,,
  \end{displaymath}
(compare with \eqref{eq:81}), whereas $u_D$ is a correction term satisfying
\begin{equation}
\label{eq:83}
    (\LL_\beta^D-\beta\lambda)u_D=-\Big(\frac{U^{\prime\prime}-U^{\prime\prime}(x_\nu)}{U-\nu}\phi\Big)^{\prime\prime}+i\beta\mu\frac{U^{\prime\prime}-U^{\prime\prime}(x_\nu)}{U-\nu}\phi \,. 
\end{equation}
Let $u_D=u_D^1+u_D^2$ where 
\begin{displaymath}
u_D^1=-(\LL_\beta^D-\beta\lambda)^{-1}\Big(\frac{U^{\prime\prime}-U^{\prime\prime}(x_\nu)}{U-\nu}\phi\Big)^{\prime\prime}\,.
\end{displaymath}
By \eqref{eq:37} and \cite[Lemma 5.7]{AH1} it holds that 
\begin{equation}\label{eq:84}
    \|u_D^1\|_2+  \beta^{1/6} \|u_D^1\|_1\leq C\beta^{-2/3}(\|\phi^{\prime\prime}\|_2+\|\phi^\prime\|_2)\,.
\end{equation}

To estimate $u_D^2=i\beta \mu
(\LL_\beta^D-\beta\lambda)^{-1}\big(\frac{U^{\prime\prime}-U^{\prime\prime}(x_\nu)}{U-\nu}\phi\big) $, we use
again \eqref{eq:37},  the trivial estimate
\begin{displaymath}   \|u_D^2\|_1\leq\sqrt{2}\,  \|u_D^2\|_2\,.
 \end{displaymath}  
  and  \cite[Lemma 5.7]{AH1} to obtain 
\begin{subequations}\label{eq:85}
 \begin{equation}  
   \|u_D^2\|_2\leq  C\, \|\phi\|_2\,,
\end{equation}
 and  
\begin{equation}
  \|u_D^2\|_1\leq C\min(|\mu|\log\beta,1)  \|\phi\|_\infty \,.
\end{equation}
\end{subequations}
Consequently, we deduce from \eqref{eq:84} and \eqref{eq:85}
\begin{subequations}
\label{eq:86}
  \begin{equation}
  \|u_D\|_2\leq C\, [\beta^{-2/3}(\|\phi^{\prime\prime}\|_2+\|\phi^\prime\|_2)+\|\phi\|_2]\,,
\end{equation}
and
\begin{equation}
  \|u_D\|_1\leq C[\min(|\mu|\,\log \beta,1) \|\phi\|_\infty+\beta^{-5/6}\, \|\phi^{\prime\prime}\|_2] \,.
\end{equation}
\end{subequations}
Returning to (\ref{eq:65}c) then yields 
\begin{displaymath}
  \|\phi^{\prime\prime}\|_2\leq \|\tilde{v}_D\|_2+ \alpha^2\|\phi\|_2 + \|\phi^{\prime\prime}(1)\hat{\psi}_+
  +\phi^{\prime\prime}(-1)\hat{\psi}_-\|_2 \,, 
\end{displaymath}
and since by \cite[Eqs. (6.17), (8.86), and (8.87)]{AH1} 
\begin{equation}
\label{eq:87}
  \|\hat{\psi}_\pm\|_2\leq C\beta^{-1/6}[1+|\lambda_\pm|\beta^{1/3}]^{-1/4}\,,
\end{equation}
we obtain by \eqref{eq:74} and \eqref{eq:80} that
\begin{equation}
\label{eq:88}
   \|\phi^{\prime\prime}\|_2\leq C\,\big[1+\lambda_M\beta^{1/3}\big]^{1/4}\,\big(\beta^{1/6}
   \|\phi^\prime\|_2+\beta^{-2/3}\|f\|_2\big)\,,
\end{equation}
where  
\begin{displaymath}
  \lambda_M:=\max(|\lambda_+|,|\lambda_-|)\leq 2\lambda_0+\max(|U(1)|,|U(-1)|)\,.
\end{displaymath}

We note that \eqref{eq:88} would prove unsatisfactory in the sequel
where the  term $[1+\lambda_M\beta^{1/3}]^{1/4}$ is multiplied by negative powers of
$(1+\lambda_\pm\beta^{1/3})$.  To address this problem we estimate in the same
manner, $ \|\phi^{\prime\prime}\|_{L^2(0,1)}$ and $\|\phi^{\prime\prime}\|_{L^2(-1,0)}$.  From
(\ref{eq:65}c) we then obtain that
\begin{displaymath}
   \|\phi^{\prime\prime}\|_{L^2(0,1)}\leq C\, [1+\lambda_+\beta^{1/3}]^{1/4}\big(\beta^{1/6}
   \|\phi^\prime\|_2+\beta^{-2/3}\|f\|_2\big)+|\phi^{\prime\prime}(-1)|\|\hat{\psi}_-\|_{L^2(0,1)}  \,,
\end{displaymath}
and since by  \cite[Eqs. (6.17), (8.86), and (8.87)]{AH1} 
\begin{displaymath}
  \|\hat{\psi}_-\|_{L^2(0,1)}\leq  \|(1+x)^4\hat{\psi}_-\|_{L^2(0,1)}\leq C\beta^{-3/2} [1+\lambda_-\beta^{1/3}]^{-9/4}\,,
\end{displaymath}
we may conclude, using \eqref{eq:74},  that 
\begin{equation}
\label{eq:89}
    \|\phi^{\prime\prime}\|_{L^2(0,1)}\leq C\, [1+\lambda_+\beta^{1/3}]^{1/4}\big(\beta^{1/6}
   \|\phi^\prime\|_2+\beta^{-2/3}\|f\|_2\big)\,.
\end{equation}
In a similar fashion we obtain that
\begin{equation}
  \label{eq:90}
 \|\phi^{\prime\prime}\|_{L^2(-1,0)}\leq C[1+\lambda_-\beta^{1/3}]^{1/4}\big(\beta^{1/6}
   \|\phi^\prime\|_2+\beta^{-2/3}\|f\|_2\big)\,.
\end{equation}
Substituting \eqref{eq:88} into \eqref{eq:86} yields, using  Sobolev
embeddings and the fact that $\phi(1)=0$, 
\begin{subequations}
\label{eq:91}
  \begin{equation}
  \|u_D\|_2\leq
  C\, [\|\phi^\prime\|_2+\beta^{-5/4}\|f\|_2] \,.
\end{equation}
and 
\begin{equation}
  \|u_D\|_1\leq
  C\, [\min(|\mu|\,\log \beta,1)\|\phi^\prime\|_2+  \beta^{-17/12}\|f\|_2] \,.
\end{equation}
\end{subequations}

Taking the inner product of \eqref{eq:83} with $u_D$ gives after
integration by parts
\begin{displaymath}
  \|u_D^\prime\|_2^2 =\mu\beta\|u_D\|_2^2+
 \Re\Big\langle u_D^\prime,\Big(\frac{U^{\prime\prime}-U^{\prime\prime}(x_\nu)}{U-\nu}\phi\Big)^\prime\Big\rangle
 +\beta\mu\, \Im\Big\langle u_D,\frac{U^{\prime\prime}-U^{\prime\prime}(x_\nu)}{U-\nu}\phi\Big\rangle \,.
\end{displaymath}
From here we conclude (separately addressing the cases $\mu>0$ and $\mu\leq 0$)   that
\begin{displaymath}
  \|u_D^\prime\|_2^2 \leq C\, \Big(\mu_+\beta\|u_D\|_2^2+\|\phi^\prime\|_2^2+\mu_+\beta\|u_D\|_1\|\phi\|_\infty  +\mu_-\beta\|\phi\|_2^2 \Big) \,.
\end{displaymath}
By \eqref{eq:91} we then have
\begin{displaymath}
  \|u_D^\prime\|_2\leq C\,\big[(|\mu|^{1/2}\beta^{1/2}+1)\|\phi^\prime\|_2+\beta^{-3/4}\|f\|_2\big]  \,.
\end{displaymath}
Combining the above with \eqref{eq:82} yields
\begin{equation}
  \label{eq:92}
\|(\tilde{v}_D^1)^\prime\|_2\leq C\,\big[(|\mu|^{1/2}\beta^{1/2}+1)\|\phi^\prime\|_2+\beta^{ -3/4}\|f\|_2\big] \,.
\end{equation}

We proceed with the estimate of $(\tilde{v}_D^3)^\prime$. 
 Since
\begin{displaymath} 
- \Re  \langle \tilde v_D^3, \beta U''(x_\nu) i\phi\rangle=   \Re\langle\tilde{v}_D^3,(\LL_\beta -\beta\lambda)\tilde{v}_D^3\rangle=\|(\tilde{v}_D^3)^\prime\|_2^2-\mu\beta \|\tilde{v}_D^3\|_2^2\,,
\end{displaymath}
we can conclude that
\begin{equation}
\label{eq:93}
  \|(\tilde{v}_D^3)^\prime\|_2^2=\mu\beta
 \|\tilde{v}_D^3\|_2^2 - \beta U^{\prime\prime}(x_\nu)\Re\langle\tilde{v}_D^3,i\phi\rangle
\end{equation}
To estimate the last term on the right-hand-side we write (see \eqref{eq:76})
\begin{equation}
\label{eq:94}
  \Re\langle\tilde{v}_D^3,i\phi\rangle= \Re\langle\tilde{v}_D,i\phi\rangle-\Re\langle\tilde{v}_D^1,i\phi\rangle-\Re\langle\tilde{v}_D^2,i\phi\rangle
\end{equation}
To obtain a bound for the first term on the right-hand-side we use
(\ref{eq:65}c) to obtain
\begin{equation}
\label{eq:95}
  \Re\langle\tilde{v}_D,i\phi\rangle=\Re \langle \phi^{\prime\prime}(1)\hat{\psi}_+,i\phi\rangle +\Re \langle \phi^{\prime\prime}(-1)\hat{\psi}_-,i\phi\rangle\,.
  \end{equation}
To estimate the right-hand-side we
observe, as in  \cite[Proof of Lemma 8.8]{AH1}, that
\begin{displaymath} 
\phi(x) =\int_x^1(\xi-x)\phi^{\prime\prime}(\xi )\,d\xi\,,
\end{displaymath}
from which we get that 
\begin{displaymath}
  |\phi(x)| \leq \frac{1}{\sqrt{3}} (1-x)^{3/2}\|\phi^{\prime\prime}\|_2  \,,
\end{displaymath}
to obtain by \cite[Eq. (8.91)]{AH1} 
\begin{multline*} 
  |\langle \phi,\hat{\psi}_+\rangle|\leq\|\phi^{\prime\prime}\|_{L^2(0,1)}\|(1-x)^{3/2}\hat{\psi}_+\|_1+
  \|\phi^{\prime\prime}\|_{L^2(-1,0)}\|(1-x)^3\hat{\psi}_+\|_1 \\  \leq 
  C\Big(\beta^{-5/6}[1+|\lambda_+|\beta^{1/3}]^{-5/4}\|\phi^{\prime\prime}\|_{L^2(0,1)}+\beta^{-4/3}[1+|\lambda_+|\beta^{1/3}]^{-2}\|\phi^{\prime\prime}\|_{L^2(-1,0)}\Big)\,.
\end{multline*}
Combining the above with \eqref{eq:74},  \eqref{eq:89} and \eqref{eq:90}
yields 
\begin{displaymath}
|\phi^{\prime\prime}(1)\langle \phi,\hat{\psi}_+\rangle|\leq C\,\big[\beta^{-1/3}\|\phi^\prime\|_2^2+\beta^{-7/6}\|\phi^\prime\|_2\|f\|_2+\beta^{-2}\|f\|_2^2\big]\leq C\,\big[\beta^{-1/3}\|\phi^\prime\|_2^2+\beta^{-2}\|f\|_2^2\big] \,.
\end{displaymath}
A similar estimate can be obtained for $\phi^{\prime\prime}(-1)\langle
\phi,\hat{\psi}_-\rangle$. Hence, 
\begin{equation}
  \label{eq:96}
 |\phi^{\prime\prime}(1)\langle \phi,\hat{\psi}_+\rangle+\phi^{\prime\prime}(-1)\langle \phi,\hat{\psi}_-\rangle|\leq
 C\,\big[\beta^{-1/3}\|\phi^\prime\|_2^2+ \beta^{-2}\|f\|_2^2\big]  \,.
\end{equation}
Substituting the above into \eqref{eq:95} yields 
\begin{equation}
\label{eq:97} 
   |\Re \langle\tilde{v}_D,i\phi\rangle|\leq C\, \big[\beta^{-1/3}\|\phi^\prime\|_2^2+  \beta^{-2}\|f\|_2^2\big]  \,.
\end{equation}

Next, we estimate the second term $ \Re \langle\tilde{v}_D^1,i\phi\rangle$ on the right-hand-side of
\eqref{eq:94}.\\
 By \eqref{eq:82} we obtain that 
\begin{displaymath} 
| \Re \langle\tilde{v}_D^1,i\phi\rangle| = |\Re \langle u_D,i\phi\rangle|\leq\|u_D\|_1\,\|\phi\|_\infty \,.
\end{displaymath}
Hence we may conclude by (\ref{eq:91}b)  that \\
\begin{equation}
\label{eq:98} 
   |\Re \langle\tilde{v}_D^1,i\phi\rangle|\leq
   C\big([\min(|\mu|\log\beta,1)+\beta^{-1/3}]\|\phi^\prime\|_2^2+\beta^{-5/2}\|f\|_2^2\big)\,.
\end{equation}

Finally, for the last term $\Re\langle\tilde{v}_D^2,i\phi\rangle$ on the
right-hand-side of 
\eqref{eq:94} we obtain  by \eqref{eq:78}  
\begin{equation}
\label{eq:99}
  |\Re\langle\tilde{v}_D^2,i\phi\rangle|\leq \|\tilde{v}_D^2\|_1\|\phi\|_\infty \leq
  C\, [\beta^{-1/3}\|\phi^\prime\|_2^2+  \beta^{-5/6}\|f\|_2\|\,\|\phi^\prime\|_2]\,. 
\end{equation}
Substituting the above, together with \eqref{eq:97} and \eqref{eq:98}
into \eqref{eq:94} yields
\begin{multline}\label{eq:100}
   |\Re\langle\tilde{v}_D^3,i\phi\rangle|\leq C\Big([\beta^{-1/3}+\min(|\mu|\log\beta,1)]\|\phi^\prime\|_2^2\\
   +
   \beta^{-5/6}\|f\|_2\|\,\|\phi^\prime\|_2+\beta^{-2}\|f\|_2^2\Big) 
\end{multline}
Substituting \eqref{eq:100} into \eqref{eq:93} 
together with \eqref{eq:79} leads to:
\begin{multline}
\label{eq:101}  
  \|(\tilde{v}_D^3)^\prime\|_2 \leq
  C\,\Big[( \mu_+^{1/2}\beta^{2/3}+\beta^{1/3}+|\mu|^{1/2}\beta^{1/2}
    \log^{1/2}\beta)\|\phi^\prime\|_2 \\ +   \beta^{1/12} \|f\|_2^{1/2}\,\|\phi^\prime\|_2^{1/2}+\beta^{-1/2}\|f\|_2\Big]\,.
\end{multline}
 Combining \eqref{eq:78},   and
\eqref{eq:101} yields
\begin{equation}
\label{eq:102}
   \|\tilde{v}_D^\prime\|_2 \leq
   C\,\big( \sigma\|\phi^\prime\|_2+\beta^{1/6}\sigma^{-1}\|f\|_2\big) \,,
\end{equation}
where, as in \eqref{eq:70},
$
\sigma(\beta,\mu)=\mu_+^{1/2}\beta^{2/3}+\beta^{1/3}+|\mu|^{1/2}\beta^{1/2}\log^{1/2}\beta\,.
$
\\

{\em Step 4:} Prove \eqref{eq:69}. \\

To complete the proof we need yet an estimate for $(U^{\prime\prime}\phi)^{\prime\prime}$. To
this end we use \eqref{eq:88} to obtain
\begin{equation}
  \label{eq:103}
  \|(U^{\prime\prime}\phi)^{\prime\prime}\|_2\leq
  C\big(\beta^{1/4}\|\phi^\prime\|_2+\beta^{ -7/12}\|f\|_2\big) \,. 
\end{equation}
Combining the above with \eqref{eq:102}, \eqref{eq:80}, 
and (\ref{eq:65}b) yields
\begin{displaymath}
  \|g_D-(U+i\lambda)( f+\phi^{\prime\prime}(1)\hat{g}_+ +\phi^{\prime\prime}(-1)\hat{g}_-)\|_2\leq
   C\, \big( \sigma\|\phi^\prime\|_2+\beta^{1/6}\sigma^{-1}\|f\|_2\big) \,.
\end{displaymath}
which is precisely \eqref{eq:69}. 
\end{proof}

For later reference we need an estimate of $v_D$. 
 \begin{remark}
By \eqref{eq:37}  and \eqref{eq:69} it
holds, under the same conditions as in  Lemma~\ref{lem:aux}, that
\begin{displaymath}
\|(\LL_\beta^D-\beta\lambda)^{-1}\big(g_D-(U+i\lambda)[f+\phi^{\prime\prime}(1)\hat{g}_+
+\phi^{\prime\prime}(-1)\hat{g}_-]\big)\|_2 \leq
C\, \big(\sigma\beta^{-2/3}\|\phi^\prime\|_2+ \beta^{-1/2}\sigma^{-1}\|f\|_2\big) \,, 
\end{displaymath}
with $\sigma$ introduced in \eqref{eq:70}.\\
By \eqref{eq:37}, \eqref{eq:39} 
and \eqref{eq:71}, we have
\begin{multline*}
  \|(\LL_\beta^D-\beta\lambda)^{-1}\big((U+i\lambda )[f+\phi^{\prime\prime}(1)\hat{g}_+
+\phi^{\prime\prime}(-1)\hat{g}_-]\big)\|_2 \\ \leq C\big([1+\lambda_m\beta^{1/3}]^{-1/4}\beta^{-1/2}
  \|\phi^\prime\|_2 +\beta^{-1}\|f\|_2\big) \,.
\end{multline*}
Consequently, we have that
\begin{equation}
  \label{eq:104}
\|v_D\|_2\leq C \, \big(\sigma\beta^{-2/3}\|\phi^\prime\|_2+\max(\beta^{-1},\beta^{-1/2}\sigma^{-1})\|f\|_2\big) \,.  
\end{equation}   
\end{remark}
~\\
We can now proceed to prove the main result of this subsection.
\begin{proof}[Proof of Proposition \ref{lem:no-slip-convex-U}]
  The proof is very similar to the last step of the proof of
  \cite[Lemma 8.8]{AH1}. \\

  {\em Step 1:} Prove \eqref{eq:68} for $-\mu_0\leq \mu \leq \Upsilon\beta^{-1/3}$, 
    where $\mu_0>0$ is the same as in the statement of Proposition
    \ref{cor:strip-bounded-inverse}, and $|\mu|\geq [\delta\beta]^{-1}$ for some
    $0<\delta<1$.\\ 

\noindent Using the notation introduced in
  \eqref{eq:11}  and \eqref{eq:64}, we begin with the
 decomposition 
\begin{subequations}\label{eq:105}
\begin{equation}
  \phi = \phi_D + \check{\phi}_+ + \check{\phi}_-\,,
\end{equation}
 where 
\begin{equation}
  \phi_D =(\A_{\lambda,\alpha}^\D)^{-1}v_D \quad \mbox{ and } \quad
 \check{\phi}_\pm =-(\A_{\lambda,\alpha}^\D)^{-1}\big([U+i\lambda]\phi^{\prime\prime}(\pm 1)\hat{\psi}_\pm \big) \,. 
\end{equation}
\end{subequations}
By  \eqref{eq:66} and \eqref{eq:74} we have
\begin{displaymath}
  \|\check{\phi}_\pm \|_{1,2}\leq C\, [1+|\lambda_\pm|\beta^{1/3}]^{ -1/4}\big(\beta^{-1/6}\|\phi\|_{1,2} + \beta^{-1}\|f\|_2\big)\,.
\end{displaymath}

Hence,  using (\ref{eq:105}a),  we get for sufficiently large $\beta$
\begin{equation}
  \label{eq:106}
 \|\check{\phi}_\pm \|_{1,2}\leq C[1+|\lambda_\pm|\beta^{1/3}]^{ -1/4}\big(\beta^{-1/6}\|\phi_D\|_{1,2}+ \beta^{-1}\|f\|_2\big)\,.
\end{equation}
Let
\begin{displaymath}
  g_D^1= g_D-(U+i\lambda)( f+\phi^{\prime\prime}(1)\hat{g}_+ +\phi^{\prime\prime}(-1)\hat{g}_-)\,,
\end{displaymath}
and 
\begin{displaymath}
  g_D^2=\phi^{\prime\prime}(1)\hat{g}_+ +\phi^{\prime\prime}(-1)\hat{g}_-+f \,.
\end{displaymath}
Note that
\begin{equation}\label{eq:106a}
  g_D=g_D^1+(U+i\lambda)g_D^2
\end{equation}
 Similarly let 
\begin{displaymath}
  v_D^1=(\LL_\beta^D-\beta\lambda)^{-1}g_D^1
\end{displaymath}
and
\begin{displaymath}
   v_D^2=(\LL_\beta^D-\beta\lambda)^{-1}(U+i\lambda)g_D^2\,.
\end{displaymath}
Note that by \eqref{eq:65}  and \eqref{eq:106a}  
\begin{displaymath}
v_D=v_D^1+v_D^2\,.
\end{displaymath}

By \eqref{eq:69}, together with 
\eqref{eq:105} and \eqref{eq:106}, we obtain that
\begin{equation}
\label{eq:107}
  \|g_D^1\|_2\leq C\, \big(\sigma\|\phi^\prime_D\|_2+\beta^{1/6}\sigma^{-1}\|f\|_2\big) \,.
\end{equation}
Similarly, combining \eqref{eq:71} with
\eqref{eq:106}, yields   
\begin{equation}\label{eq:107a}
  \|g_D^2\|_2\leq C\, \big(
 [1+\lambda_m\beta^{1/3}]^{-1/4}\beta^{1/2}\|\phi_D^\prime\|_2+\|f\|_2\big) \,.
\end{equation}
We now use  Proposition \ref{cor:strip-bounded-inverse} for $v=v_D^j$ ($j=1,2$),
which  gives (see \eqref{eq:28}) 
\begin{displaymath}
  \| (\mathcal A^\D_{\lambda,\alpha})^{-1} v_D^j\|_{1,2} \leq C\,  \Big\|(1\pm\cdot)^{1/2} \frac{v_D^j}{U+ i \lambda}\Big\|_1\,.
\end{displaymath}
As is proved in \cite[Proposition 4.14]{AH1} we have,  in view of
  \eqref{eq:5}, for $j=1,2$ and  $p,q\in\R_+$ satisfying $\frac 1q +\frac 1p=1$,
\begin{displaymath}
 \Big\|(1\pm\cdot)^{1/2} \frac{v_D^j}{U+ i \lambda}\Big\|_1\leq \sqrt{2} \, \|v_D^j\|_p\, \Big\| \frac{1}{U+i\lambda}\Big\|_q\leq C_q\, |\mu|^{-1/p}\|v_D^j\|_p\,.
\end{displaymath}
Hence, we obtain, for $ \phi_D =(\A_{\lambda,\alpha}^\D)^{-1}(v_D^1+v_D^2)$,
that for any $p>1$, 
\begin{equation}
\label{eq:128}
\|\phi'_D\|_2 \leq C \, \big(|\mu|^{-1/p}\|v_D^1\|_p+|\mu|^{-1/2}\|v_D^2\|_2\big)\,.
\end{equation}

Substituting \eqref{eq:128} into \eqref{eq:107} we can conclude that
\begin{equation}
\label{eq:108}
    \|g_D^1\|_2\leq
    C\, \big(\sigma|\mu|^{-1/p}\|v_D^1\|_p  + \sigma |\mu|^{-1/2}\|v_D^2\|_2+\beta^{1/6}\sigma^{-1}\|f\|_2\big) \,.
\end{equation}
Similarly, as in the proof of \cite[Proposition 4.10]{AH1},  in view of
  \eqref{eq:5}, 
\begin{displaymath}
 \Big\|(1\pm\cdot)^{1/2} \frac{v_D}{U+ i \lambda}\Big\|_1\leq \sqrt{2}\,  \|v_D\|_\infty\, \Big\|
 \frac{1}{U+i\lambda}\Big\|_1\leq C \log (|\mu|^{-1}) \|v_D\|_\infty\,.
\end{displaymath}
Substituting into \eqref{eq:107a},  we obtain that
\begin{equation}
\label{eq:109}
   \|g_D^2\|_2\leq C
 \big([1+\lambda_m\beta^{1/3}]^{-1/4}\log( |\mu|^{-1})\,\beta^{1/2}\|v_D\|_\infty+\|f\|_2\big)\,.
\end{equation}

Using \eqref{eq:37}  (for the term $\mu (\LL_\beta^D-\beta\lambda)^{-1} g_D^2$) together
with \eqref{eq:39} yields 
\begin{displaymath}
  \|v_D^2\|_2+\beta^{-1/2}\|(v_D^2)^\prime\|_2\leq C \beta^{-1} \|g_D^2\|_2\,,
\end{displaymath}
from which we conclude, by interpolation, for $2\leq p\leq\infty$ that 
\begin{equation}
\label{eq:110}
  \|v_D^2\|_p\leq C \beta^{-\frac{3}{4}-\frac{1}{2p}}\|g_D^2\|_2\,.
\end{equation}
By   \cite[Corollary 5.3]{AH1} it holds,  for $ 2\leq p\leq\infty$,
that 
\begin{equation}
\label{eq:111}
  \|v_D^1\|_p\leq C\beta^{-\frac{3p+2}{6p}}\|g_D^1\|_2\,.
\end{equation}
Consequently, by (\ref{eq:108}), \eqref{eq:110}, and (\ref{eq:111}) we
have 
\begin{equation}
\label{eq:112}
   \|g_D^1\|_2\leq
    C\big[\sigma\big(|\mu|^{-1/p}\beta^{-\frac{3p+2}{6p}}\|g_D^1\|_2+ |\mu|^{-1/2}\beta^{-1}\|g_D^2\|_2 \big)+\beta^{1/6}\sigma^{-1}\|f\|_2\big]\,.
\end{equation}
Similarly, by \eqref{eq:109}, \eqref{eq:110}, and (\ref{eq:111}) it holds that
\begin{displaymath}
   \|g_D^2\|_2\leq C\,\big[\log|\mu|^{-1}(\|g_D^1\|_2+\beta^{-1/4}\|g_D^2\|_2)+\|f\|_2\big] \,,
\end{displaymath}
from which we conclude, recalling that $|\mu|\geq \beta^{-1}$ and choosing $\beta$ large enough,
that 
\begin{equation}
\label{eq:113}
    \|g_D^2\|_2\leq C\, \big[\log|\mu|^{-1}\|g_D^1\|_2+\|f\|_2\big] \,.
\end{equation}
Substituting \eqref{eq:113}  into \eqref{eq:112}, recalling again that
in this step $|\mu|\geq \beta^{-1}$ is assumed, we obtain for $2\leq p$,
\begin{equation}
\label{eq:129}
    \|g_D^1\|_2\leq  C\big[\sigma\big(|\mu|^{-1/p}\beta^{-\frac{3p+2}{6p}}+ 
      |\mu|^{-1/2}\log|\mu|^{-1/2}\beta^{-1}\big)\|g_D^1\|_2+\beta^{1/6}\sigma^{-1}\|f\|_2\big]\,. 
\end{equation}
Here we have used the inequality $\sigma^2  \leq C  |\mu|^{1/2} \beta^{\frac 76 }$\, to obtain
\eqref{eq:129}.

We now estimate the coefficient of $\|g_D^1\|_2$ on the right-hand-side
of \eqref{eq:129}.  We split the discussion into two cases, depending
on the sign of $\mu$.\\

{\bf The case $\mu >0$.\\}

By \eqref{eq:70}, for $\mu >0$, it holds that
$\sigma(\beta,\mu)\leq 2 \mu_+^{1/2}\beta^{2/3}+\beta^{1/3}$. Consequently, for sufficiently
large $\beta$,
\begin{displaymath}
  \sigma|\mu|^{-1/p}\big(\beta^{-\frac{3p+2}{6p}}+\log|\mu|^{-1}\beta^{-\frac{3p+2}{4p}}\big)\leq 3(\Upsilon^{\frac{p-2}{2p}}+|\mu|^{-1/p}\beta^{-\frac{p+2}{6p}})\,.
\end{displaymath}
Since for $|\mu|\geq [\delta \beta]^{-1}$ it holds that
$|\mu|^{-1/p}\beta^{-\frac{p+2}{6p}}\leq\delta^{1/p}$ for any $4\leq p $,  we obtain, for
sufficiently small $\Upsilon$ and $\delta$, by choosing some $ p\geq 4$ 
\begin{equation}
\label{eq:4.55}
    \|g_D^1\|_2\leq  C\beta^{1/6}\sigma^{-1}\|f\|_2\,.
\end{equation}

{\bf The case  $\mu <0$.\\}

For $- \lambda_0 \leq \mu \leq  -[\delta \beta]^{-1}$, we have by \eqref{eq:70}
  \begin{displaymath}
    \sigma = |\mu|^{1/2}\beta^{1/2}\log^{1/2}\beta+\beta^{1/3}\,,
  \end{displaymath}
and hence, for $ p\geq 4$, 
\begin{multline*}
   \sigma|\mu|^{-1/p}\big(\beta^{-\frac{3p+2}{6p}}
   +\log|\mu|^{-1}\beta^{-\frac{3p+2}{4p}}\big) \\ \leq 
   2\big(|\mu|^{1/2-1/p}\beta^{-\frac{1}{3p}}\log^{1/2}\beta
   +  \delta ^{1/p} \beta^{\frac{1}{p}+\frac 13  -\frac{1}{3p}-\frac{1}{2}} \big)\leq C\, ( \beta^{-\frac{1}{3p}}\log^{1/2}\beta + \delta^{1/p})\,.
 \end{multline*}
Hence, by choosing $\delta$ small and $\beta$ large,  \eqref{eq:4.55} is valid
for $\mu<0$ as well.  

Together with \eqref{eq:113} and the lower bound $\sigma \geq \beta^{1/3}$,
\eqref{eq:4.55} yields (again for sufficiently small $\Upsilon$ and $\delta$ and
$|\mu|\geq [\delta\beta]^{-1}$), 
\begin{equation}\label{eq:4.56}
     \|g_D^2\|_2\leq C\, \|f\|_2 \,.
\end{equation}
By \eqref{eq:110}  for $p=2$ and \eqref{eq:111} for $p=4$
together with \eqref{eq:4.55} and \eqref{eq:4.56}, we then obtain, for
$|\mu| \geq [\delta\beta]^{-1}$, that
\begin{displaymath} 
  \|v_D^1\|_4\leq C\beta^{-5/12}\sigma^{-1}\|f\|_2\quad ;\quad   \|v_D^2\|_2\leq C\beta^{-1}\|f\|_2\,.
\end{displaymath}
By \eqref{eq:128} for $p=4$ we then have
\begin{displaymath}
    \|\phi_D\|_{1,2}\leq C\big[ |\mu|^{-1/2}\beta^{-1} + |\mu|^{-1/4} \sigma^{-1} \beta^{-5/12}]\|f\|_2 \,,
\end{displaymath}
By \eqref{eq:70}, we have
$$
\sigma^{-1} \leq \mu_+^{-1/2}\beta^{-2/3} \mbox{ and } \sigma^{-1} \leq  \beta^{-1/3}\,.
$$ 
Hence we get 
\begin{displaymath}
    \|\phi_D\|_{1,2}\leq C\,\big[ |\mu|^{-1/2}\beta^{-1}+
    \min( \mu_+^{-3/4}\beta^{-13/12},|\mu|^{-1/4}\beta^{-3/4})\big]\,\|f\|_2 \,.
\end{displaymath}\
By \eqref{eq:105} and  \eqref{eq:106}, this implies
\begin{equation}
  \label{eq:114}
  \|\phi\|_{1,2}\leq C\,\big[ |\mu|^{-1/2}\beta^{-1} +
    \min( \mu_+^{-3/4}\beta^{-13/12},|\mu|^{-1/4}\beta^{-3/4})\big]\, \|f\|_2 \,.
\end{equation}
Note that for $\mu=\Upsilon\beta^{-1/3}$ we obtain (\ref{eq:68}b).\\
From \eqref{eq:114}, we deduce that  for $|\mu|\geq [\delta\beta]^{-1}$
\begin{equation}
  \label{eq:114aa}
  \|\phi\|_{1,2}\leq C \beta^{-1/2}\|f\|_2 \,,
\end{equation}
which proves (\ref{eq:68}a) in this case.

{\em Step 2:} Prove \eqref{eq:68} for $|\mu|\leq [\delta\beta]^{-1}$.\\
 Let 
 \begin{displaymath}
   \tilde{\lambda}= 2[\delta \beta]^{-1}+i\nu :=\tilde \mu + i \nu \,,
 \end{displaymath}
 and then write
 \begin{displaymath}
   \B_{ \tilde{\lambda},\alpha}\phi=f+\big(2[\delta\beta]^{-1}-\mu\big)\beta (-\phi^{\prime\prime}+\alpha^2\phi)
   \,.
 \end{displaymath}
 Applying \eqref{eq:114aa} (which is applicable since $|\tilde
 \mu|\geq [\delta\beta]^{-1}$)
 to the above inequality
yields 
 \begin{equation}\label{eq:114a}
    \|\phi\|_{1,2}\leq C\, \big(\beta^{-1/2}\|f\|_2+\delta^{-1}\beta^{-1/2}\|-\phi^{\prime\prime}+\alpha^2\phi\|_2\big) \,. 
 \end{equation}
 By \eqref{eq:88} it holds that
\begin{displaymath}
   \|\phi\|_{1,2}\leq C\,\big(\beta^{-1/2}\|f\|_2+\delta^{-1}\beta^{-1/4}\|\phi\|_{1,2}\big) \,. 
\end{displaymath}
Combining the above with  \eqref{eq:114a} yields (\ref{eq:68}a) for
$|\mu|\leq [\delta\beta]^{-1}$. 
 \end{proof}

\subsection{The case $1\ll\alpha$}
\label{sec:2.1}

 While some of estimates in the previous subsection can be obtained
also for large values of $\alpha$ it is much more natural to study the case
separately, since the estimates become much simpler. 

\subsubsection{The case $1\ll\alpha\ll\beta^{1/3} $}

  For unbounded $\alpha$ we can no longer use \eqref{eq:49} and
  consequently also \eqref{eq:74}. However, since \eqref{eq:72} and
  \eqref{eq:73} remain valid, we may still use
  \cite[Eq. (6.35)]{AH1}, as long as $\alpha\beta^{-1/3}$ is
  sufficiently small.\\
More precisely, we have
  \begin{lemma} There exist positive $C$, $\alpha_1$, $\Upsilon$ and $\beta_0$ such that for any $\beta\geq \beta_0$,
$0\leq \alpha\leq \alpha_1\beta^{1/3}$,  and any $\lambda =\mu+ i \nu$ for which $\mu\leq \Upsilon\beta^{-1/3}$,
   we have, for $f= B_{\lambda,\alpha,\beta}^\D \phi$, 
  \begin{equation}
    \label{eq:115}
|\phi^{\prime\prime}(\pm1)|\leq C\,[1+|\lambda_\pm|\beta^{1/3}]^{1/2}\,\big[\beta^{1/3}\log\beta\, \|\phi\|_\infty +\beta^{-1/2}\|f\|_2\big]  \,.
  \end{equation}
  \end{lemma}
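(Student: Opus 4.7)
The plan is to mimic Step~1 of the proof of Lemma~\ref{lem:aux}, which yielded \eqref{eq:74} for bounded $\alpha$, and to adapt Proposition~\ref{lem:boundary} to the present regime $0\leq\alpha\leq\alpha_1\beta^{1/3}$. As before, set $v:=-\phi''+\alpha^2\phi$, so that $v\in H^2$, $v(\pm 1)=-\phi''(\pm 1)$ (since $\phi\in H_0^2(-1,1)$), and $(\LL_\beta^\zeta-\beta\lambda)v=f-i\beta U''\phi$ by \eqref{eq:73}, with $\zeta_\pm$ the functions defined by \eqref{eq:130}. The only structural difference is that $\zeta_\pm$ are now the explicit hyperbolic sines $\zeta_\pm(x)=\sinh(\alpha(1\pm x))/\sinh(2\alpha)$, satisfying $\|\zeta_\pm\|_\infty\leq 1$, $\|\zeta_\pm\|_1\leq C/\alpha$, and $\|\zeta'_\pm\|_2\leq C\sqrt{\alpha}$, so that the uniform bound \eqref{eq:47} fails and must be replaced by a quantitative tracking of the $\alpha$-dependence throughout the proof of Proposition~\ref{lem:boundary}.

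I would revisit that proof, treating the two pieces of the source separately. For the $f$-contribution the argument of \eqref{eq:56}-\eqref{eq:62} goes through unchanged, except that the off-diagonal terms in \eqref{eq:60} now carry an extra $\sqrt{\alpha}$, giving a perturbation of size $\sqrt{\alpha}\,\beta^{-1/2}[1+|\lambda_\pm|\beta^{1/3}]^{-3/4}$; after division by $\langle \hat{\psi}_\pm,1\rangle\sim\beta^{-1/3}[1+|\lambda_\pm|\beta^{1/3}]^{-1/2}$ (nonzero since $\Upsilon<\vartheta_1^r$), this is a relative correction of size $\sqrt{\alpha}\,\beta^{-1/6}\leq\sqrt{\alpha_1}$, which is absorbed once $\alpha_1$ is small enough, and produces the term $C[1+|\lambda_\pm|\beta^{1/3}]^{1/2}\beta^{-1/2}\|f\|_2$. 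For the $-i\beta U''\phi$-contribution one cannot afford the $\|g\|_{1,2}$ norm from \eqref{eq:49}, which would reintroduce $\beta^{1/3}\|\phi\|_{1,2}$ as in \eqref{eq:74}. Instead, the inner product $\langle (\LL_\beta^D-\beta\lambda)^{-1}(-i\beta U''\phi),\zeta_\pm\rangle$ is estimated directly by extracting its singular part: by Proposition~\ref{Dirichlet-L1-H1-0}, this is $-U''(x_\nu)\phi(x_\nu)/[U-\nu-i\max(-\mu,\beta^{-1/3})]$ up to a controlled remainder, and pairing with the bounded function $\zeta_\pm$ gives
\[
|U''(x_\nu)|\,|\phi(x_\nu)|\int_{-1}^{1}\frac{dx}{|U-\nu-i\max(-\mu,\beta^{-1/3})|}\leq C\log\beta\,\|\phi\|_\infty,
\]
where the integral is bounded using \eqref{eq:5} and $\max(-\mu,\beta^{-1/3})\geq\beta^{-1/3}$. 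Division by $\langle\hat{\psi}_\pm,1\rangle$ then yields the factor $\beta^{1/3}\log\beta\,\|\phi\|_\infty$ in \eqref{eq:115}.

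The main obstacle is the $L^1$-remainder in Proposition~\ref{Dirichlet-L1-H1-0}, bounded crudely by $C\beta^{-1}\|{-}i\beta U''\phi\|_{1,2}\sim\|\phi\|_{1,2}$, which would reintroduce a $\|\phi\|_{1,2}$ term absent from \eqref{eq:115}. To eliminate it, I would decompose $U''\phi=U''(x_\nu)\phi+[U''-U''(x_\nu)]\phi$ and estimate each piece separately: the first via \cite[Proposition~5.4]{AH1}, which delivers $\|(\LL_\beta^D-\beta\lambda)^{-1}(-i\beta U''(x_\nu)\phi)\|_2\leq C\beta^{1/6}|U''(x_\nu)|\,\|\phi\|_\infty$, the second via Proposition~\ref{Prop:3.8AH2} written as $(U-\nu)\cdot[U''-U''(x_\nu)]\phi/(U-\nu)$, the bracket being uniformly bounded by $C\|\phi\|_\infty$ because $U\in C^3$ and \eqref{eq:5} holds. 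Pairing these bounds with $\zeta_\pm$ via $\|\zeta_\pm\|_2\leq C/\sqrt{\alpha}$ (for the first piece) and $\|\zeta_\pm\|_\infty\leq 1$ together with the $L^1$-type extraction above (for the second) keeps the dependence on $\phi$ at $\|\phi\|_\infty$ throughout. Once $|\langle v_D,\zeta_\pm\rangle|\leq C\bigl(\log\beta\,\|\phi\|_\infty+\beta^{-1/2}\|f\|_2\bigr)$ (modulo the boundary-value corrections coming from $v(\pm 1)=-\phi''(\pm 1)$), the analog of \eqref{eq:61}-\eqref{eq:62} closes the argument and yields \eqref{eq:115}.
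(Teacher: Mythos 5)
Your strategy correctly identifies the structural issue (the uniform $H^1$ bound \eqref{eq:47} on $\zeta_\pm$ fails for large $\alpha$), and the treatment of the $f$-contribution --- where tracking the extra $\sqrt{\alpha}$ in \eqref{eq:60} yields a relative perturbation $\sqrt{\alpha}\,\beta^{-1/6}\leq\sqrt{\alpha_1}$ absorbed by taking $\alpha_1$ small --- is in the right spirit. However, the paper's own proof is a one-line citation: it observes that \cite[Lemma 6.2 and Remark 6.3]{AH1} apply under the weaker hypothesis \cite[Eq.\ (6.22)]{AH1}, and checks that $\zeta_\pm$ of \eqref{eq:130} satisfy that hypothesis when $\alpha\beta^{-1/3}$ is small; it does not re-prove the boundary estimate.

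There is a genuine gap in your handling of the $-i\beta U''\phi$ contribution. The piece $-i\beta U''(x_\nu)\phi$ is bounded only in $L^2$ via \cite[Proposition~5.4]{AH1}, yielding $\|(\LL_\beta^D-\beta\lambda)^{-1}(-i\beta U''(x_\nu)\phi)\|_2\leq C\beta^{1/6}\|\phi\|_\infty$, and you then pair this with $\|\zeta_\pm\|_2$. But $\|\zeta_\pm\|_2\sim\min(1,\alpha^{-1/2})$ (it does not blow up, but it does not decay, for small $\alpha$): at $\alpha=0$ one has $\zeta_+(x)=(1+x)/2$ and $\|\zeta_+\|_2=\sqrt{2/3}$. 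Thus for $\alpha\lesssim 1$ this pairing produces $C\beta^{1/6}\|\phi\|_\infty$, and after division by $\langle\hat\psi_\pm,1\rangle\sim\beta^{-1/3}[1+|\lambda_\pm|\beta^{1/3}]^{-1/2}$ you obtain $C\beta^{1/2}[1+|\lambda_\pm|\beta^{1/3}]^{1/2}\|\phi\|_\infty$, which exceeds the claimed $C\beta^{1/3}\log\beta\,[1+|\lambda_\pm|\beta^{1/3}]^{1/2}\|\phi\|_\infty$ whenever $\alpha\lesssim\beta^{1/3}/\log^2\beta$ --- i.e.\ on most of the range $[0,\alpha_1\beta^{1/3}]$. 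The bound $\|\zeta_\pm\|_2\leq C/\sqrt{\alpha}$ is only useful in the complementary regime. To close the argument for $\alpha\lesssim 1$ you would need a sharper (e.g.\ $L^1$) estimate for $(\LL_\beta^D-\beta\lambda)^{-1}(-i\beta U''(x_\nu)\phi)$ of the form $\lesssim\log\beta\,\|\phi\|_\infty$, or a different mechanism to extract the singular part of that term while keeping the remainder controlled by $\|\phi\|_\infty$ alone; none of the propositions quoted in this paper (\eqref{eq:38}, \eqref{eq:39}, or \cite[Prop.\ 5.4]{AH1} as used in \eqref{eq:79}) delivers this. This is precisely the work that \cite[Lemma 6.2, Remark 6.3]{AH1} encapsulate, and the route you propose does not reproduce it.
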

  \begin{proof}
    This result follows immediately from in \cite[Lemma 6.2 and Remark
    6.3]{AH1} which both hold under \cite[Eq. (6.22)]{AH1}.  It can be
    easily verified that  \cite[Eq. (6.22)]{AH1} is satisfied, for
    $\zeta_\pm$ given by \eqref{eq:130}, for sufficiently small
    $\alpha\beta^{-1/3}$. 
\end{proof}
Note that  \eqref{eq:74} (which follows from \eqref{eq:49}) and \eqref{eq:115}
differ by an additional $\log\beta$ on the right-hand-side, which is why
we preferred  \eqref{eq:74} in the previous subsection.

We can now state and prove the following result:
\begin{proposition}
  \label{prop:case-alpha-log}
There exist positive $C$, $\alpha_0$, $\alpha_1$, $\Upsilon$ and $\beta_0$ such that for any $\beta\geq \beta_0$,
$\alpha_0\leq \alpha\leq \alpha_1\beta^{1/3}$,  and any $\lambda =\mu+ i \nu$ such that $\mu\leq \Upsilon\beta^{-1/3}$, $ \B_{\lambda,\alpha,\beta}^\D$ is invertible and  it holds that
    \begin{equation}
\label{eq:116}
\big\|(\B_{\lambda,\alpha,\beta}^\D)^{-1}\big\|+
      \big\|\frac{d}{dx}\, (\B_{\lambda,\alpha,\beta}^\D)^{-1}\big\|\leq
      C\,\alpha^{-1/2}\beta^{- 5/6} \,.
  \end{equation}
\end{proposition}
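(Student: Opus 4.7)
The plan is to follow the strategy of the proof of Proposition \ref{lem:no-slip-convex-U} with two substitutions adapted to the regime $1\ll\alpha\ll\beta^{1/3}$. First, the boundary control \eqref{eq:115} replaces \eqref{eq:74}; it loses a factor of $\log\beta$ relative to \eqref{eq:74} but is available throughout the range $\alpha\leq\alpha_1\beta^{1/3}$ once $\alpha_1$ is chosen small enough. Second, Lemma \ref{lem:large-eigenvlaue} replaces Proposition \ref{cor:strip-bounded-inverse} as the Rayleigh inverse estimate. The source of the new factor $\alpha^{-1/2}$ in \eqref{eq:116} is a sharpening of that Rayleigh estimate: inspecting step 3 of the proof of Proposition \ref{prop:inviscid-boundedness-1}, the underlying energy identity actually delivers
\[
\alpha\,\|\phi\|_2+\|\phi'\|_2\leq C\,N_{m,p}^\pm(v,\lambda)\,,
\]
so by Gagliardo--Nirenberg $\|\phi\|_\infty\leq C\alpha^{-1/2}\,N_{m,p}^\pm(v,\lambda)$ whenever $\alpha\geq\alpha_0$.

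Concretely, I would start from the decomposition $\phi=\phi_D+\check\phi_++\check\phi_-$ of \eqref{eq:105} together with the splitting $g_D=g_D^1+(U+i\lambda)g_D^2$ from the proof of Proposition \ref{lem:no-slip-convex-U}. The next step is to re-run the argument of Lemma \ref{lem:aux}, substituting \eqref{eq:115} for \eqref{eq:74}, and keeping track of $\alpha$ throughout the estimates of $\tilde{v}_D$, $(U''\phi)''$, and the inner products involving $\|\phi\|_\infty$; each such term acquires an extra $\alpha^{-1/2}$ from the Rayleigh improvement above, which more than compensates the $\log\beta$ weakening introduced by \eqref{eq:115}. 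Then I would solve the Schr\"odinger equations $(\LL_\beta^D-\beta\lambda)v_D^j=g_D^j$ (and the analogue with $(U+i\lambda)g_D^2$) using \eqref{eq:37}, \eqref{eq:39}, and \cite[Corollary 5.3]{AH1} exactly as in \eqref{eq:110}--\eqref{eq:111}, and recover $\phi_D$ from $v_D$ via Lemma \ref{lem:large-eigenvlaue}, while the boundary corrections $\check\phi_\pm$ are controlled through Lemma \ref{lem:inviscid-decay}. Optimizing in $p\geq 4$ and collecting terms at $\mu\leq\Upsilon\beta^{-1/3}$ yields the bound \eqref{eq:116}.

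The main obstacle is bookkeeping rather than a genuinely new idea: one must verify that the sharpened Rayleigh estimate really does absorb every occurrence of the $\log\beta$ factor inherited from \eqref{eq:115}, and in particular that the boundary contributions $\phi''(\pm 1)\hat g_\pm$, once passed through the Schr\"odinger resolvent and then through the Rayleigh resolvent, produce terms of size $C\alpha^{-1/2}\beta^{-5/6}\|f\|_2$ and not merely $C\beta^{-5/6}\log\beta\,\|f\|_2$. The condition $\alpha\leq\alpha_1\beta^{1/3}$ with $\alpha_1$ small is what enables the final smallness/absorption step, mirroring the end of Step~1 in the proof of Proposition \ref{lem:no-slip-convex-U}.
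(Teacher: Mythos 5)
Your approach is genuinely different from the paper's. The paper does \emph{not} run the Rayleigh-decomposition machinery of Proposition~\ref{lem:no-slip-convex-U} and Lemma~\ref{lem:aux} at all in this regime. Instead it estimates $\tilde v_D$ directly (via a coarser two-term decomposition $\check v_D^1=-i\beta(\LL_\beta^D-\beta\lambda)^{-1}U''\phi$ and $\check v_D^2$, together with \eqref{eq:38} and \eqref{eq:115}), obtains the representation \eqref{eq:118} for $-\phi''+\alpha^2\phi$, then tests this identity against $\phi$. This produces the quadratic form identity \eqref{eq:119} with $\|\phi'\|_2^2+\alpha^2\|\phi\|_2^2$ on the left and terms on the right bounded by $\|\phi'\|_2^{3/2}\|\phi\|_2^{1/2}$ and $\beta^{-5/6}\|f\|_2\|\phi\|_\infty$. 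The $\alpha^{-1/2}$ in \eqref{eq:116} then drops out of a single application of Young's inequality, $\alpha^{1/2}\|\phi'\|_2^{3/2}\|\phi\|_2^{1/2}\leq C(\|\phi'\|_2^2+\alpha^2\|\phi\|_2^2)$, absorbing the Gagliardo--Nirenberg term into the left side. There is no need to split into the cases $|\mu|\geq[\delta\beta]^{-1}$ and $|\mu|<[\delta\beta]^{-1}$, nor to optimise over $p$, because dividing by $U+i\lambda$ never occurs; the argument is uniform in $\mu\leq\Upsilon\beta^{-1/3}$.

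Your plan has two concrete gaps. First, re-running Step~1 of the proof of Proposition~\ref{lem:no-slip-convex-U} inherits its restriction $|\mu|\geq[\delta\beta]^{-1}$, coming from the use of $\|1/(U+i\lambda)\|_q$ in \eqref{eq:128}; you would still need a separate argument (analogous to Step~2 of that proof, using an adaptation of \eqref{eq:88} to large $\alpha$ via \eqref{eq:115}) for $|\mu|$ small. You do not mention this. Second, the claim that ``each such term acquires an extra $\alpha^{-1/2}$'' is too optimistic as stated: the sharpened Rayleigh bound $\alpha\|\phi_D\|_2+\|\phi'_D\|_2\leq C\,N_{m,p}^\pm$ gains an $\alpha^{-1/2}$ only on $\|\phi\|_\infty$- and $\|\phi\|_2$-type occurrences, not on the $\|\phi'\|_2$-type contributions. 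In Lemma~\ref{lem:aux}, the worst piece of $\sigma$ in \eqref{eq:70}, namely $|\mu|^{1/2}\beta^{1/2}\log^{1/2}\beta$, is attached to $\|\phi'\|_2$ (coming from \eqref{eq:101}), and the corresponding contributions in \eqref{eq:108}--\eqref{eq:129} would not pick up the gain. You would need to rebalance the bookkeeping; the paper's direct energy argument avoids this entirely and is considerably shorter. Your Gagliardo--Nirenberg observation is correct and worth noting, but it is used implicitly in the paper's proof, not as a sharpening of the Rayleigh inverse.
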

\begin{proof}
 Let $\tilde{v}_D$ be given by (\ref{eq:65}c), and consequently satisfy  \eqref{eq:75}.
We decompose $\tilde{v}_D$ in the following manner
\begin{displaymath}
  \tilde{v}_D=\check{v}_D^1+\check{v}_D^2\,,
\end{displaymath}
where 
\begin{displaymath}
\check{v}_D^1=-i\beta(\LL_\beta^D-\beta\lambda)^{-1} U^{\prime\prime}\phi 
\end{displaymath} 
and
\begin{displaymath}
  \check{v}_D^2=(\LL_\beta^D-\beta\lambda)^{-1}[\phi^{\prime\prime}(1)\hat{g}_+ +\phi^{\prime\prime}(-1)\hat{g}_-+f]\,.
\end{displaymath}
Using \cite[Lemma 5.7]{AH1}, with
$g=\phi^{\prime\prime}(1)\hat{g}_+ +\phi^{\prime\prime}(-1)\hat{g}_-+f$ yields
\begin{displaymath}
 \|\check{v}_D^2\|_1\leq C\beta^{-5/6}[|\phi^{\prime\prime}(1)|\,\|\hat{g}_+\|_2
  +|\phi^{\prime\prime}(-1)|\,\|\hat{g}_-\|_2+\|f\|_2]\,.
\end{displaymath}
By  \eqref{eq:53} and  \eqref{eq:115}  we then obtain that
\begin{equation}
  \label{eq:117}
\|\check{v}_D^2\|_1\leq C\big([1+\lambda_m\beta^{1/3}]^{ -3/4} \beta^{-1/3}\log \beta\|\phi\|_\infty +\beta^{-5/6}\|f\|_2\big)  \,.
\end{equation}
Applying \eqref{eq:38} with $f=-i\beta U''\phi$ yields
\begin{displaymath}
  \Big\|\check{v}_D^1+
  \frac{U^{\prime\prime}(x_\nu)\phi(x_\nu)}{U-\nu-i\max(-\mu,\beta^{-1/3})}\Big\|_1\leq
 C\, \|\phi\|_{1,2}\,.
\end{displaymath}
Consequently, using \eqref{eq:117} and Sobolev embeddings, we may write
\begin{subequations}
\label{eq:118}
  \begin{equation}
 \tilde{v}_D =
 -\frac{U^{\prime\prime}(x_\nu)\phi(x_\nu)}{U-\nu-i\max(-\mu,\beta^{-1/3})}+\tilde{g}_d \,,
\end{equation}
where
\begin{equation}
  \|\tilde{g}_d\|_1\leq C\,\big (\|\phi\|_{1,2}+\beta^{-5/6}\|f\|_2\big)
\end{equation}
\end{subequations}

By (\ref{eq:65}c)  and \eqref{eq:118}  we may write
\begin{displaymath}
  -\phi^{\prime\prime}+\alpha^2\phi=-\phi^{\prime\prime}(1)\hat{\psi}_+ -\phi^{\prime\prime}(-1)\hat{\psi}_-
  -\frac{U^{\prime\prime}(x_\nu)\phi(x_\nu)}{U-\nu-i\max(-\mu,\beta^{-1/3})}+\tilde{g}_d \,.
\end{displaymath}
Taking the scalar product with $\phi$ and integrating by parts yields 
\begin{multline}
\label{eq:119}
  \|\phi^\prime\|_2^2+\alpha^2\|\phi\|_2^2 = -\phi^{\prime\prime}(1)\langle \phi,\hat{\psi}_+\rangle
  -\phi^{\prime\prime}(-1)\langle \phi,\hat{\psi}_-\rangle \\
 - \Big\langle \phi,\frac{U^{\prime\prime}(x_\nu)\phi(x_\nu)}{U-\nu-i\max(-\mu,\beta^{-1/3})}\Big\rangle+
  \langle \phi,\tilde{g}_D\rangle\,.  
\end{multline}
To estimate the first term $ -\phi^{\prime\prime}(1)\langle \phi,\hat{\psi}_+\rangle$ on the
right-hand-side of \eqref{eq:119} we use the inequality
\begin{displaymath}
  |\phi(x)| \leq(1-x)^{1/2}\|\phi^\prime\|_2  
\end{displaymath}
to obtain by \cite[Eq. (8.91)]{AH1} 
\begin{displaymath}
  |\langle \phi,\hat{\psi}_+\rangle|\leq\|\phi^\prime\|_2\|(1-x)^{1/2}\hat{\psi}_+\|_1\leq C\beta^{-1/2}[1+|\lambda_+|\beta^{1/3}]^{-3/4}\|\phi^\prime\|_2\,.
\end{displaymath}
Combining the above with \eqref{eq:115} yields 
\begin{displaymath}
|\phi^{\prime\prime}(1)\langle \phi,\hat{\psi}_+\rangle|\leq C\,\big[\beta^{-1/6}\log \beta
[1+|\lambda_+|\beta^{1/3}]^{-1/4}\|\phi^\prime\|_2+\beta^{-1}\|f\|_2\big]\|\phi\|_\infty \,.
\end{displaymath}
A similar estimate can be obtained for  the second term $\phi^{\prime\prime}(-1)\langle
\phi,\hat{\psi}_-\rangle$.\\
 Hence, Sobolev embeddings and the fact that $\phi(1)=0$ yield
\begin{equation}
  \label{eq:120}
 |\phi^{\prime\prime}(1)\langle \phi,\hat{\psi}_+\rangle+\phi^{\prime\prime}(-1)\langle \phi,\hat{\psi}_-\rangle|\leq C\,\big[\beta^{-1/6}\log
 \beta\|\phi^\prime\|_2^{3/2}\|\phi\|_2^{1/2}+\beta^{-1}\|f\|_2\|\phi\|_\infty\big] \,. 
\end{equation}

For the third term on the right-hand-side of \eqref{eq:119} we use
integration by parts to obtain
\begin{displaymath}
    \Big\langle \phi,\frac{U^{\prime\prime}(x_\nu)\phi(x_\nu)}{U-\nu-i\max(-\mu,\beta^{-1/3})}\Big\rangle=
    -\Big\langle \Big(\frac{\phi}{U^\prime}\Big)^\prime,U^{\prime\prime}(x_\nu)\phi(x_\nu)\log
    \big(U-\nu-i\max(-\mu,\beta^{-1/3})\big)\Big\rangle  
\end{displaymath}
Consequently, since
\begin{displaymath}
  \|\log \big(U-\nu-i\max(-\mu,\beta^{-1/3})\big)\|_2\leq C\,,
\end{displaymath}
we can conclude that
\begin{equation}
  \label{eq:121}
  \Big|\Big\langle
  \phi,\frac{U^{\prime\prime}(x_\nu)\phi(x_\nu)}{U-\nu-i\max(-\mu,\beta^{-1/3})}\Big\rangle\Big|\leq
  C\, \|\phi\|_\infty\|\phi^\prime\|_2\leq \hat C\, \|\phi^\prime\|_2^{3/2}\|\phi\|_2^{1/2} \,.
\end{equation}
Finally, for the last term on the right-hand-side it holds, by
(\ref{eq:118}b)
\begin{equation}
\label{eq:122}
    |\langle \phi,\tilde{g}_D\rangle|\leq C\|\phi\|_\infty\|\tilde{g}_D\|_1\leq
    \hat C\, [\|\phi^\prime\|_2^{3/2}\|\phi\|_2^{1/2}+\beta^{-5/6}\|f\|_2\|\phi\|_\infty] \,.
\end{equation}
Substituting \eqref{eq:122}, \eqref{eq:121}, and  \eqref{eq:120} into
\eqref{eq:119} yields
\begin{displaymath}
    \|\phi^\prime\|_2^2+\alpha^2\|\phi\|_2^2 \leq
    C\,\big[\|\phi^\prime\|_2^{3/2}\|\phi\|_2^{1/2}+\beta^{-5/6}\|f\|_2\,\|\phi\|_\infty\big]  \,.
\end{displaymath}
Since by  Young's inequality 
\begin{displaymath}
  \alpha^{1/2}\|\phi^\prime\|_2^{3/2}\|\phi\|_2^{1/2} \leq C\, (\|\phi^\prime\|_2^2+\alpha^2\|\phi\|_2^2) \,,
\end{displaymath}
we obtain that for sufficiently large $\alpha$ we have
\begin{displaymath} 
   \|\phi^\prime\|_2\leq C\alpha^{-1/2}\beta^{-5/6}\, \|f\|_2\,.
\end{displaymath}
\end{proof}

\subsubsection{The case $\alpha\gg\beta^{1/6}$}
For the case where $\alpha\gg\beta^{1/6}$ the estimates of $(\B_{\lambda,\alpha,\beta})^{-1}$
obtained in \cite[\S8.2.2]{AH1} do not depend on the assumption
$|U^{\prime\prime}|>0$, and hence we can rely on the same results in this work
as well. For the convenience of the reader we bring here, without
proof, the statements of Proposition 8.4 and Remark 8.5 in \cite{AH1}.
For that matter we also refer the reader to the definition of $\hat
\mu_m$ (which must be positive) in \cite[Eq. (6.57)]{AH1}.
\begin{proposition}
  \label{lem:no-slip-large-alpha} For any $\varkappa >0$, 
 there exist positive $\beta_0$,  $\alpha_2$,  and $C$ such that for all $\beta\geq \beta_0$ it
 holds that
  \begin{equation}
\label{eq:123}
      \sup_{
        \begin{subarray}{c}
         \Re\lambda\leq\beta^{-1/3}[J_m^{2/3}\hat{\mu}_m-\varkappa-\alpha^2\beta^{-2/3}/2] \\
          \alpha_2\beta^{1/6}\leq \alpha 
        \end{subarray}}\big(\alpha \big\|(\B_{\lambda,\alpha,\beta}^\D)^{-1}\big\|+
      \Big\|\frac{d}{dx}\, (\B_{\lambda,\alpha,\beta}^\D)^{-1}\Big\|\big)\leq
    C\,\beta^{-5/6}\,,
  \end{equation}
 where $J_m=\min (U'(-1),U'(1))$ (see Proposition \ref{lem:boundary}).
\end{proposition}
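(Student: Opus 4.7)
The plan is to follow the scheme of \cite[\S8.2.2]{AH1} and verify that none of its steps require $U''$ to be bounded away from zero, so that the same argument applies verbatim in the present setting. Writing $v=(-d^2/dx^2+\alpha^2)\phi$ and $f=\B_{\lambda,\alpha,\beta}\phi$, the equation reduces to the Schrödinger-type problem
\begin{displaymath}
(\LL_\beta-\beta\lambda)v=f-i\beta U''\phi,
\end{displaymath}
in which $v$ inherits boundary values $v(\pm1)=-\phi''(\pm1)$ from the Orr--Sommerfeld Dirichlet data. The idea is to estimate $v$ via the Schrödinger resolvent bounds of Section~\ref{sec:3}, and then to exploit the strong coercivity of the Dirichlet realization of $-d^2/dx^2+\alpha^2$, whose inverse has operator norm $\lesssim\alpha^{-2}$, to recover $\phi$. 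When $\alpha\gg\beta^{1/6}$, this coercivity dominates the $i\beta U''\phi$ perturbation and no pointwise lower bound on $|U''|$ is needed.

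Concretely, I would first split $v=v_D+v(1)\hat\psi_++v(-1)\hat\psi_-$ with $v_D\in H^2\cap H^1_0$, exactly as in \eqref{eq:50}, and apply Lemma~\ref{lem:standard}, Proposition~\ref{Dirichlet-L1-H1-0}, and Proposition~\ref{Prop:3.8AH2} to bound $v_D$ by $\beta^{-2/3}\|f\|_2+\beta^{1/3}\|U''\|_\infty\|\phi\|_2$ (up to $O(1)$ factors). The boundary traces $\phi''(\pm1)$ are controlled by the Airy-expansion argument of \cite[Lemma 6.2]{AH1}, which is exactly the regime underlying \eqref{eq:115}; the threshold $\Re\lambda\leq\beta^{-1/3}[J_m^{2/3}\hat\mu_m-\varkappa-\alpha^2\beta^{-2/3}/2]$ keeps the Airy denominators in \eqref{eq:42} bounded away from their zeros, so the Airy-based boundary layers $\hat\psi_\pm$ and $\hat g_\pm$ admit the usual pointwise and $L^q$ bounds from \cite[Eqs.~(8.86)--(8.91)]{AH1}. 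Inverting $(-d^2/dx^2+\alpha^2)$ in Dirichlet then gives
\begin{displaymath}
\|\phi\|_2\leq C\alpha^{-2}\|v\|_2,\qquad \|\phi'\|_2\leq C\alpha^{-1}\|v\|_2,
\end{displaymath}
so that, combining with the above estimate on $v$,
\begin{displaymath}
\|\phi\|_2\leq C\bigl(\alpha^{-2}\beta^{-2/3}\|f\|_2+\alpha^{-2}\beta^{1/3}\|U''\|_\infty\|\phi\|_2\bigr).
\end{displaymath}
For $\alpha\geq\alpha_2\beta^{1/6}$ with $\alpha_2$ sufficiently large (depending on $\|U''\|_\infty$), the perturbation term is absorbed into the left-hand side, yielding $\alpha\|\phi\|_2+\|\phi'\|_2\leq C\beta^{-5/6}\|f\|_2$, which is~\eqref{eq:123}.

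The main obstacle is the sharp accounting of the boundary-layer contributions: the trace bound on $\phi''(\pm1)$ carries a factor $[1+|\lambda_\pm|\beta^{1/3}]^{1/2}$, and this must be compensated by the decay of $\hat\psi_\pm$ and $\hat g_\pm$ in the interior, so that the combined contribution to $\|\phi'\|_2$ remains at the level $\beta^{-5/6}$ uniformly in $\alpha$. This is precisely where the condition $\Re\lambda\leq\beta^{-1/3}(J_m^{2/3}\hat\mu_m-\varkappa)-\alpha^2\beta^{-1}/2$ is used: it keeps the arguments of the Airy functions in a domain where the relevant quotients in \eqref{eq:42} are uniformly bounded, and it is the reason one cannot push beyond the first-zero threshold $\hat\mu_m$ introduced in \cite[Eq.~(6.57)]{AH1}. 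Once this bookkeeping is done, the argument is purely energetic in $\alpha$ and the hypothesis $|U''|>0$ plays no role, which is the point of the remark preceding the proposition.
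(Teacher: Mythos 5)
The paper does not actually prove this proposition: it is imported verbatim from \cite[Proposition 8.4 and Remark 8.5]{AH1}, with only the observation that the proof given there does not invoke the pointwise lower bound $|U''|>0$. So there is no paper proof to compare against line by line; what matters is whether your reconstruction is a self-contained argument.

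Your sketch captures the right mechanism --- for $\alpha\gg\beta^{1/6}$ the Dirichlet coercivity of $-d^2/dx^2+\alpha^2$ dominates the $i\beta U''$ perturbation and $U''$ need not stay away from zero --- but it is not yet a proof, for two concrete reasons. First, the trace estimate you cite, \eqref{eq:115}, is established only under the hypothesis $0\leq\alpha\leq\alpha_1\beta^{1/3}$ (it rests on \cite[Eq.~(6.22)]{AH1}, which fails for larger $\alpha$), whereas the proposition must cover \emph{all} $\alpha\geq\alpha_2\beta^{1/6}$; for $\alpha\gtrsim\beta^{1/3}$ you need a different control of $\phi''(\pm1)$, and the argument actually changes character there (the constraint $\Re\lambda\leq\beta^{-1/3}[J_m^{2/3}\hat\mu_m-\varkappa-\alpha^2\beta^{-2/3}/2]$ forces $\mu$ to be strongly negative, which supplies extra dissipativity, but you do not exploit that). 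Second, and more structurally, your displayed bootstrap
\begin{displaymath}
\|\phi\|_2\leq C\bigl(\alpha^{-2}\beta^{-2/3}\|f\|_2+\alpha^{-2}\beta^{1/3}\|U''\|_\infty\|\phi\|_2\bigr)
\end{displaymath}
tacitly replaces $\|v\|_2$ by $\|v_D\|_2$. The Schr\"odinger resolvent bound \eqref{eq:37} controls only the Dirichlet part $v_D$ of the decomposition $v=v_D+v(1)\hat\psi_++v(-1)\hat\psi_-$; the boundary-layer terms $v(\pm1)\hat\psi_\pm$ contribute to $\|v\|_2$ at size roughly $[1+|\lambda_\pm|\beta^{1/3}]^{1/2}\beta^{1/3}(\log\beta)\|\phi\|_\infty\cdot\beta^{-1/6}[1+|\lambda_\pm|\beta^{1/3}]^{-1/4}$, and this must be verified to be absorbable uniformly in $\alpha$. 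You correctly identify this as ``the main obstacle,'' but then leave it as an obstacle rather than estimating it, so the loop $\|v\|_2\to\|\phi\|_2\to\|v\|_2$ is not actually closed. Filling in precisely this accounting (and extending it past $\alpha\sim\beta^{1/3}$) is the content of \cite[\S8.2.2]{AH1}, which the paper is deliberately not reproducing.
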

From the proposition we deduce that  there exist positive
  $\beta_0$,  $\alpha_1$, $\alpha_2$,  $\Upsilon$ and $C$ such that for all $\beta\geq \beta_0$ it 
holds
  \begin{equation}
\label{eq:123a}
      \sup_{
        \begin{subarray}{c}
         \Re\lambda\leq\Upsilon \beta^{-1/3} \\
          \alpha_2\beta^{1/6}\leq \alpha \leq \alpha_1 \beta^{1/3}
        \end{subarray}}\big(\alpha \big\|(\B_{\lambda,\alpha,\beta}^\D)^{-1}\big\|+
      \Big\|\frac{d}{dx}\, (\B_{\lambda,\alpha,\beta}^\D)^{-1}\Big\|\big)\leq
    C\,\beta^{-5/6}\,,
  \end{equation}
  Note that \eqref{eq:123a} is valid for $\beta^{-1/3}\alpha$ which is small
  enough as in the preceding subsection.  Note further that \eqref{eq:123a}
  provides a better estimate for $\|(\B_{\lambda,\alpha,\beta}^\D)^{-1}\|$ but worse
  for $\|d/dx\, (\B_{\lambda,\alpha,\beta}^\D)^{-1}\|$.

\subsection{Large $|\lambda|$}
\label{sec:large-lambda}
\begin{proposition}
\label{lem:large-lambda}
There exist positive $\beta_0$, $\lambda_0$, and $C$ such that, for all $\beta\geq \beta_0$
 $|\lambda|\geq \lambda_0$,  and $\mu<1$,  $\B_{\lambda,\alpha,\beta}^\D $ is invertible  and  it holds that
    \begin{equation}
\label{eq:124}
      \sup_{0\leq\alpha}\big\|(\B_{\lambda,\alpha,\beta}^\D)^{-1}\big\|+
      \big\|\frac{d}{dx}\, (\B_{\lambda,\alpha,\beta}^\D)^{-1}\big\|\leq C \beta^{-1} |\lambda|^{-1}\,.
  \end{equation}
\end{proposition}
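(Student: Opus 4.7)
The plan is to test $\B_{\lambda,\alpha,\beta}^{\D}\phi=f$ directly against $\phi$, exploiting the full Dirichlet--Neumann conditions $\phi(\pm 1)=\phi^\prime(\pm 1)=0$ to discard every boundary term that appears in the subsequent integrations by parts. Setting $w:=-\phi^{\prime\prime}+\alpha^2\phi$ and $Q(\phi):=\|\phi^\prime\|_2^2+\alpha^2\|\phi\|_2^2$, I would first expand
\[
\langle\B_{\lambda,\alpha,\beta}^{\D}\phi,\phi\rangle=\|\phi^{\prime\prime}\|_2^2+\alpha^2\|\phi^\prime\|_2^2+i\beta\!\int Uw\bar\phi\,dx-\beta\lambda\,Q(\phi)+i\beta\!\int U^{\prime\prime}|\phi|^2\,dx,
\]
and then use the identity $\int Uw\bar\phi\,dx=\int U(|\phi^\prime|^2+\alpha^2|\phi|^2)\,dx+\int U^\prime\bar\phi\phi^\prime\,dx$ (one integration by parts on $-\int U\phi^{\prime\prime}\bar\phi$) together with $\Re(\bar\phi\phi^\prime)=\tfrac12(|\phi|^2)^\prime$ to split the pairing into
\[
\Re\langle\B^{\D}\phi,\phi\rangle=\|\phi^{\prime\prime}\|_2^2+\alpha^2\|\phi^\prime\|_2^2-\beta\mu\,Q(\phi)-\beta\,\Im\!\int U^\prime\bar\phi\phi^\prime\,dx,
\]
\[
\Im\langle\B^{\D}\phi,\phi\rangle=\beta\!\int U\,(|\phi^\prime|^2+\alpha^2|\phi|^2)\,dx+\tfrac{\beta}{2}\!\int U^{\prime\prime}|\phi|^2\,dx-\beta\nu\,Q(\phi).
\]
Every term other than the dominant $-\beta\mu\,Q(\phi)$, $-\beta\nu\,Q(\phi)$, and the positive elliptic contribution $\|\phi^{\prime\prime}\|_2^2+\alpha^2\|\phi^\prime\|_2^2$ is then seen to be bounded in absolute value by $C_0\,\beta\,Q(\phi)$, via $\|U\|_{C^2}<\infty$ and Poincar\'e's inequality $\|\phi\|_2\leq C_P\|\phi^\prime\|_2$.

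Next I would split on $\mu$. In the regime $\mu\leq -|\lambda|/2$, the real identity combined with $|\langle f,\phi\rangle|\leq C_P\|f\|_2\,Q(\phi)^{1/2}$ and the choice $|\lambda|\geq\lambda_0$ with $\lambda_0$ so large that $|\mu|\geq 4C_0$, yields $\tfrac12\beta|\mu|\,Q(\phi)\leq C_P\|f\|_2\,Q(\phi)^{1/2}$, hence $Q(\phi)^{1/2}\leq C\|f\|_2/(\beta|\lambda|)$. In the complementary regime $-|\lambda|/2<\mu<1$ one has $|\nu|\geq|\lambda|\sqrt{3}/2$ once $|\lambda|\geq 2$; since the weighted average $\bar U_\phi:=Q(\phi)^{-1}\int U\,(|\phi^\prime|^2+\alpha^2|\phi|^2)\,dx$ lies in $[U(-1),U(1)]$, we get $|\bar U_\phi-\nu|\geq|\nu|/2$ as soon as $|\nu|\geq 2\|U\|_\infty$; the imaginary identity then yields $\beta|\nu|\,Q(\phi)/4\leq C_P\|f\|_2\,Q(\phi)^{1/2}$, again producing $Q(\phi)^{1/2}\leq C\|f\|_2/(\beta|\lambda|)$. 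Poincar\'e then upgrades this into $\|\phi\|_{1,2}\leq C\|f\|_2/(\beta|\lambda|)$, which is \eqref{eq:124}.

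This a priori estimate immediately yields injectivity; invertibility then follows from the Fredholm-index-zero property of $\B_{\lambda,\alpha,\beta}^{\D}$, which can be obtained by a homotopy to the manifestly invertible principal part $-\beta\lambda_\star(-d^2/dx^2+\alpha^2)$ in the limit of a large real negative $\lambda_\star$, or alternatively by adapting the arguments of Remark~\ref{rem:fredholm}. The main obstacle is simply the bookkeeping needed to absorb the $\beta Q(\phi)$ contributions coming from $\int U^\prime\bar\phi\phi^\prime$, $\int U^{\prime\prime}|\phi|^2$, and $\int U\,(|\phi^\prime|^2+\alpha^2|\phi|^2)$: this only requires choosing $\lambda_0$ large enough compared with $\|U\|_{C^2}$ and $C_P$, and checking that the two cases together cover $\{\mu<1,\ |\lambda|\geq\lambda_0\}$ for this $\lambda_0$.
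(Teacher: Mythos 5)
Your proof is correct and follows essentially the same route as the paper: test $\B_{\lambda,\alpha,\beta}^\D\phi=f$ against $\phi$, take real and imaginary parts, split into the regimes where $|\mu|$ resp.\ $|\nu|$ is comparable to $|\lambda|$, and absorb the $O(\beta Q(\phi))$ lower-order contributions ($\int U'\bar\phi\phi'$, $\int U''|\phi|^2$, $\int U(|\phi'|^2+\alpha^2|\phi|^2)$) by choosing $\lambda_0$ large relative to $\|U\|_{C^2}$. One small inaccuracy: $-\beta\lambda_\star(-d^2/dx^2+\alpha^2)$ is not the principal part of the fourth-order operator $\B_{\lambda,\alpha,\beta}^\D$ (that is $d^4/dx^4$); the intended homotopy should be within the family $\B_{\lambda,\alpha,\beta}^\D$ itself (or to a biharmonic-type operator), and in any case the Fredholm-index-zero fact is already supplied by \cite[\S5.3]{AH2} as the paper notes.
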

\begin{proof}
  The proof is rather straightforward. Since for $|\lambda|\geq \lambda_0$ we have either
  $|\nu|\geq \lambda_0/2$ or $\mu\leq -\lambda_0/2$ (or both). Let $\phi\in D(\B_{\lambda,\alpha,\beta})$ and
  $f\in L^2(-1,1)$ satisfy $\B_{\lambda,\alpha,\beta}\, \phi=f$.\\

{\em Step 1}: Prove \eqref{eq:124} in the case $|\nu|\geq \lambda_0/2$ and $|\mu|\leq\nu$.\\
Integration by parts yields
\begin{displaymath} 
  \Im\langle\phi,\B_{\lambda,\alpha,\beta}\phi\rangle=\beta\nu\big(\|\phi^\prime\|_2^2+\alpha^2\|\phi\|_2^2\big)- \beta\big(\langle U\phi^\prime,\phi^\prime\rangle+\Re\langle U^\prime\phi,\phi^\prime\rangle+\alpha^2\langle U\phi,\phi\rangle+\langle U^{\prime\prime}\phi,\phi\rangle\big)\,.
\end{displaymath}
From this we easily conclude that
\begin{displaymath}
  \|\phi^\prime\|_2^2+\alpha^2\|\phi\|_2^2\leq\frac{1}{\beta|\nu|}\|\phi\|_2\|f\|_2+ \frac{C}{|\nu|}( \|\phi^\prime\|_2^2+[\alpha^2+1]\|\phi\|_2^2)
\end{displaymath}
For sufficiently large $\lambda_0$ we obtain that 
\begin{equation}
\label{eq:131}
   \sup_{0\leq\alpha}\big\|(\B_{\lambda,\alpha,\beta}^\D)^{-1}\big\|+
      \big\|\frac{d}{dx}\, (\B_{\lambda,\alpha,\beta}^\D)^{-1}\big\|\leq C \beta^{-1} |\nu|^{-1}\,.
\end{equation}
From \eqref{eq:131} we conclude \eqref{eq:124} for $|\mu| \leq |\nu|$ and
$|\nu|\geq \lambda_0/2$. 
  
{\em Step 2}: Prove \eqref{eq:124} in the case $\mu \leq -\lambda_0/2$ and
$|\nu|\leq|\mu|$. 

Integration by parts yields
\begin{displaymath}
  -\Re\langle\phi,\B_{\lambda,\alpha,\beta}\phi\rangle=\|\phi^{\prime\prime}\|_2^2+\alpha^2\|\phi^\prime\|_2^2+\beta|\mu|(\|\phi^\prime\|_2^2+\alpha^2\|\phi\|_2^2)+\beta\Im\langle U^\prime\phi,\phi^\prime\rangle\,.
\end{displaymath}
From the above identity we get
\begin{displaymath}
  \|\phi^\prime\|_2^2+\alpha^2\|\phi\|_2^2\leq \frac{1}{\beta|\mu|}\|\phi\|_2\|f\|_2+  \frac{C}{|\mu|}( \|\phi^\prime\|_2^2+\|\phi\|_2^2)
\end{displaymath}
For sufficiently large $\lambda_0$ we obtain from the above and 
Poincar\'e inequalities that
\begin{displaymath}
   \sup_{0\leq\alpha}\big\|(\B_{\lambda,\alpha,\beta}^\D)^{-1}\big\|+
      \big\|\frac{d}{dx}\, (\B_{\lambda,\alpha,\beta}^\D)^{-1}\big\|\leq  C \beta^{-1} |\mu|^{-1}\,,
\end{displaymath}
which leads to \eqref{eq:124} for
  $|\mu| \geq |\nu|$ and $\mu \leq -\lambda_0/2$. 

Combining the two steps completes the proof of \eqref{eq:124}. 
\end{proof}

\section{Proof of Theorem \ref{thm:orr}}
\label{sec:proof}
{\em Step 1:} Proof of the injectivity for $0 \leq \alpha \leq \alpha_0$, $|\lambda|\leq\lambda_0$
and $\mu\leq-\mu_0$.\\

Let $\alpha_0$ be given by Proposition \ref{prop:case-alpha-log}, 
$\lambda_0$ be the same as in the statement of Proposition
\ref{lem:large-lambda} and $\mu_0$ be chosen as in the statement of Proposition
\ref{lem:no-slip-convex-U}. We show in this step that, there exist
positive $\beta_0>0$ such that, for all $\beta\geq \beta_0$ and $\alpha,\lambda$ such that $0\leq \alpha
\leq \alpha_0$, $\Re\lambda<-\mu_0$, and $|\lambda|\leq\lambda_0$, $\B_{\lambda,\alpha,\beta}^\D$ is injective.

Here $\beta_0 \geq \beta_0^{max}$ where $\beta_0^{max}$ is the maximum of the $\beta_0$'s
appearing in the previous statements in Sections \ref{sec:3} and
\ref{sec:4} .

We argue by contradiction.
Suppose that there exists $(\alpha_n,\beta_n, \lambda_n,\phi_n)$ such that $\beta_n \to +\infty$,
$\{\alpha_n,\beta_n\}_{n\geq 1}\subset \R_+^2$, $0 \leq \alpha_n \leq \alpha_0$,  
\begin{displaymath}
  \{\lambda_n\}_{n\geq1}=
\{\mu_n+i\nu_n\}_{n\geq1} \subset \overline{B(0,\lambda_0)}\,,
\end{displaymath}
where $\mu_n \leq -\mu_0$, and
$\{\phi_n\}_{n\geq 1}\subset H^4(-1,1)\cap H^2_0(-1,1)$ satisfies
  \begin{subequations}
\label{eq:125}
  \begin{equation} 
\|\phi_n\|_{1,2}=1
  \end{equation}
  and
  \begin{equation}
    \Big(-\frac{d^2}{dx^2}+i\beta_n[U+i\lambda_n]\Big)
    \Big(-\frac{d^2}{dx^2}+\alpha^2_n\Big)\phi_n+i\beta_nU^{\prime\prime}\phi_n=0 \,.
  \end{equation}
  \end{subequations}
  To avoid the contradiction with $\|\phi_n\|_{1,2}=1$, Proposition
  \ref{lem:large-lambda} implies that $(\mu_n,\nu_n) \in B(0,\lambda_0)$ and
  Proposition \ref{lem:no-slip-convex-U} implies that $\mu_n\leq-\mu_0$
  for sufficiently large $n$.
  Thus, it remains necessary to reach a contradiction for $(\mu_n,\nu_n) \in B(0,\lambda_0)$
  and $\mu_n \leq  -\mu_0$.
Let
\begin{equation}
\label{eq:132}
  v_D^n=\A_{\lambda,\alpha}\phi_n+ (U+i\lambda_n)[\phi_n^{\prime\prime}(1)\hat{\psi}_+ + \phi_n^{\prime\prime}(-1)\hat{\psi}_-] \,.
\end{equation}
Let $\chi\in H^1_0(-1,1)$. Integration by parts yields
\begin{displaymath}
  \langle\chi^\prime,\phi^\prime_n\rangle+
  \alpha_n^2\langle\chi,\phi_n\rangle+\Big\langle\chi,\frac{U^{\prime\prime}}{U+i\lambda_n}\phi_n\Big\rangle=\Big\langle\chi,\frac{v_D^n}{U+i\lambda_n}\Big\rangle
  -\langle\chi,\phi_n^{\prime\prime}(1)\hat{\psi}_+ + \phi_n^{\prime\prime}(-1)\hat{\psi}_-\rangle\,. 
\end{displaymath}
By \eqref{eq:104} it holds that, as $n\to +\infty$,
\begin{displaymath}
\Big|\Big\langle\chi,\frac{v_D^n}{U+i\lambda_n}\Big\rangle\Big|\leq
\frac{1}{|\mu_0|}\|\chi\|_2\|v_D^n\|_2\to0 \,,
\end{displaymath}
Furthermore, since $|\chi(x)|\leq\|\chi^\prime\|_2 \,(1-x)^{1/2}$ it holds that
\begin{displaymath}
  |\langle\chi,\phi_n^{\prime\prime}(1)\hat{\psi}_+\rangle|\leq C|\phi_n^{\prime\prime}(1)|\,\|\chi^\prime\|_2\,\|(1-x)^{1/2}\hat{\psi}_+\|_1
\end{displaymath}
By \eqref{eq:74} 
and \cite[Eq. (8.91)]{AH1}  it holds
that
\begin{displaymath}
    |\langle\chi,\phi_n^{\prime\prime}(1)\hat{\psi}_+\rangle|\leq
    C\beta_n^{-1/6}[1+|(\lambda_n)_+|\,\beta_n^{1/3}]^{-1/4} \|\phi_n\|_{1,2}  
 \end{displaymath}
and since $\|\phi_n\|_{1,2}=1$ we obtain that
\begin{displaymath}
  \langle\chi,\phi_n^{\prime\prime}(1)\hat{\psi}_+\rangle\to0\,.
\end{displaymath}
In a similar manner we obtain that
\begin{displaymath}
   \langle\chi,\phi_n^{\prime\prime}(-1)\hat{\psi}_-\rangle\to0\,.
\end{displaymath}
We can thus conclude that
\begin{equation}
\label{eq:126}
  \langle\chi^\prime,\phi^\prime_n\rangle+ \alpha_n^2\langle\chi,\phi_n\rangle+\Big\langle\chi,\frac{U^{\prime\prime}}{U+i\lambda_n}\phi_n\Big\rangle\to0\,.
\end{equation}

 By \eqref{eq:132} and \eqref{eq:11} it holds that
  \begin{displaymath}
    \phi_n^{\prime\prime}=\phi_n^{\prime\prime}(1)\hat{\psi}_+ + \phi_n^{\prime\prime}(-1)\hat{\psi}_--\alpha^2\phi-\frac{1}{U+i\lambda_n}[U^{\prime\prime}\phi_n-v_D^n]\,.
  \end{displaymath}
Since by \eqref{eq:74} and \cite[Eq. (8.91)]{AH1} it holds for some
$C>0$ for all $n\geq1$ that
\begin{displaymath}
  \|\phi_n^{\prime\prime}(\pm1)\hat{\psi}_\pm\|_1\leq C \,,
\end{displaymath}
we can use \eqref{eq:104} and the fact that that $\mu\leq-\mu_0$ to obtain
that for another $C>0$ and for all $n\in\N$ we have 
\begin{equation}
  \label{eq:133}
\|\phi_n^{\prime\prime}\|_1\leq C
\end{equation}

Since $(\phi_n,\alpha_n,\lambda_n)$ are bounded in $H^1_0(-1,1)\times\R\times\C$, and in addition, by
\eqref{eq:133}, bounded in $W^{2,1}(-1,1)$ we may move to a subsequence
for which \break $\phi_n\to\phi\in H^1_0(-1,1)\cap W^{2.1}(-1,+1) $ is strongly convergent
in $H^1_0$ and \break $(\alpha_n,\lambda_n)\to(\alpha,\lambda)$. By \eqref{eq:126} the limit must
satisfy, for any $\chi\in H_0^1(-1,+1)$,
\begin{displaymath}
    \langle\chi^\prime,\phi^\prime\rangle+ \alpha^2\langle\chi,\phi\rangle+\Big\langle\chi,\frac{U^{\prime\prime}}{U+i\lambda}\phi\Big\rangle=0\,.
\end{displaymath}
Given that $\Re\lambda\leq-\mu_0$ 
we may conclude by standard elliptic estimates
that \break $\phi\in H^2(-1,1)$ and hence
\begin{equation}
\label{eq:134}
  \A_{\lambda,\alpha}\phi=0 \,.
\end{equation}
 Since
  \begin{displaymath}
     \A_{\lambda,\alpha}\phi=\Big(-\frac{d^2}{dx^2}+\alpha^2\Big)(R_\alpha^\D-i\lambda)\phi\,,
  \end{displaymath}
we may conclude from  Theorem \ref{thm:invicid} 
that $\phi\equiv0$. A
contradiction.

{\em Step 2:} Prove \eqref{eq:9} for $0 \leq \alpha \leq \alpha_0$ and all $\lambda\in\C$ such
that $\Re\lambda\leq\Upsilon\beta^{-1/3}$, where $\Upsilon>0$ is the same as in the statement of
Proposition \ref{lem:no-slip-convex-U}.

From the foregoing discussion in the previous step, It follows that
\break $\text{Ker}\,\B_{\lambda,\alpha,\beta}^\D=\{0\}$ for sufficiently large $\beta$ and for all
$0\leq\alpha\leq\alpha_0$ and $\lambda$ satisfying $|\lambda| \leq \lambda_0$ and  $\mu\leq-\mu_0$. Note that since the
Fredholm index of $\B_{\lambda,\alpha,\beta}^\D$ vanishes (see \cite[\S5.3]{AH2}) it
follows that $\B_{\lambda,\alpha,\beta}^\D$ is bijective. 

 We now obtain a bound on the inverse of $\B_{\lambda,\alpha,\beta}^\D$ for $0 \leq \alpha \leq
\alpha_0$, $|\lambda|\leq\lambda_0$ and $\mu\leq-\mu_0$. To this end let $f\in L^2(-1,1)$. By the
surjectivity of $\B_{\lambda,\alpha,\beta}^\D$ there exists $ \phi\in H^4(-1,1)\cap
H^2_0(-1,1)$ satisfying $\B_{\lambda,\alpha,\beta}^\D\phi=f$. Suppose for a
contradiction that $(\B_{\lambda,\alpha,\beta}^\D)^{-1}$ is unbounded. Then, there
must exist $\{ f_n\}_{n=1}^\infty\subset L^2(-1,1)$ satisfying $f_n\to0$ and $(\alpha_n,\beta_n, \lambda_n,\phi_n)$ such that $\beta_n \to +\infty$,
$\{\alpha_n,\beta_n\}_{n\geq 1}\subset \R_+^2$, $0 \leq \alpha_n \leq \alpha_0$,  
\begin{displaymath}
  \{\lambda_n\}_{n\geq1}=
\{\mu_n+i\nu_n\}_{n\geq1} \subset \overline{B(0,\lambda_0)}\,,
\end{displaymath}
where $\mu_n \leq -\mu_0$, and $\{\phi_n\}_{n\geq 1}\subset H^4(-1,1)\cap H^2_0(-1,1)$
satisfying $\B_{\lambda,\alpha,\beta}^D\phi_n=f_n$ and $\|\phi_n\|_{1,2}=1$ (see \cite[Lemma
3.10]{chenetal23}). Let $v_D^n$ be given by \eqref{eq:132}. We may
still apply \eqref{eq:104} to obtain that $\|v_D^n\|_2\to0$. We then
proceed in precisely the same manner as in step 1 to obtain that
$\phi_n\to\phi$ strongly in $H^1_0(-1,1)$ where $\phi\in H^1_0(-1,1)$ satisfies weakly
\eqref{eq:134} and $\|\phi\|_{1,2}=1$. As above it follows that
$\phi\in H^2(-1,1)$, and hence, by Theorem \ref{thm:invicid} we reach a
contradiction.

In view of the foregoing discussion we may conclude that there exist
$\beta_0$ and $M \geq 1$ such that for $\beta \geq \beta_0$
\begin{equation}
\label{eq:127}
  \sup_{
    \begin{subarray}{c}\strut
     | \lambda| \leq \lambda_0\\
      \mu\leq-\mu_0
    \end{subarray}}
\Big(\|(\B_{\lambda,\alpha,\beta}^\D)^{-1}\|+\Big\|\frac{d}{dx}(\B_{\lambda,\alpha,\beta}^\D)^{-1}\Big\|\Big)\leq M \,,
\end{equation}
for all $0\leq\alpha\leq\alpha_0$. Combining \eqref{eq:127} with Propositions
\ref{prop:case-alpha-log} and \ref{lem:large-lambda} yields that
for sufficiently large $\beta$
\begin{displaymath}
  \sup_{\mu\leq-\mu_0}
\Big(\|(\B_{\lambda,\alpha,\beta}^\D)^{-1}\|+\Big\|\frac{d}{dx}(\B_{\lambda,\alpha,\beta}^\D)^{-1}\Big\|\Big)\leq M\,.
\end{displaymath}
Combining the above with (\ref{eq:68}b), \eqref{eq:124}, and the
Phragm\'en-Lindel\"of Theorem (see \cite[\S{} 6]{AH2})
completes the proof of the theorem in the
case $0\leq\alpha\leq\alpha_0$. 

{\em Step 3:}  Prove \eqref{eq:9} for $\alpha \geq \alpha_0$ and all $\lambda\in\C$
such that $\Re\lambda\leq\Upsilon\beta^{-1/3}-\alpha^2\beta^{-1}/2$.

The proof for $\alpha \geq \alpha_0$ follows immediately from
\eqref{eq:116} and  \eqref{eq:123}.  

\def\cprime{$'$}

\end{document}